\numberwithin{equation}{section}
\newcommand{\Z}{\mathbb{Z}}
\newcommand{\N}{\mathbb{N}}
\newcommand{\Q}{\mathbb{Q}}
\newcommand{\mc}[1]{\mathcal{#1}}
\newcommand{\C}{\mathbb{C}}
\newcommand{\A}{\mathbb{A}}
\newcommand{\M}{\mathcal{M}}
\newcommand{\diag}{\mathrm{diag}}
\newcommand{\modc}[1]{\;(\mathrm{mod} \;#1)}
\newcommand{\cls}{\mathrm{cls}}
\newcommand{\spn}{\mathrm{spn}}
\newcommand{\gen}{\mathrm{gen}}
\renewcommand{\vec}[1]{\mathbf{#1}}
\newcommand{\lcm}{\mathrm{lcm}}
\newcommand{\ord}{\mathrm{ord}}
\newcommand{\rank}{\mathrm{rank}}
\newcommand{\idele}{id\`ele}
\newcommand{\mf}{\mathfrak}
\let\latexchi\chi
\renewcommand\chi{\@ifnextchar_\sub@chi\latexchi}
\newcommand{\sub@chi}[2]{
	\@ifnextchar^{\subsup@chi{#2}}{\latexchi^{}_{#2}}%
}
\newcommand{\subsup@chi}[3]{
	\latexchi_{#1}^{#3}%
}
\newtheorem{lemma}{Lemma}
\numberwithin{lemma}{section}
\newtheorem{theorem}[lemma]{Theorem}
\newtheorem{proposition}[lemma]{Proposition}
\newtheorem{corollary_lem}{Corollary}[lemma]
\newtheorem{corollary_prop}{Corollary}[lemma]
\theoremstyle{definition}
\newtheorem*{definition}{Definition}
\theoremstyle{remark}
\newtheorem*{remark}{Remark}
\renewcommand*\env@matrix[1][*\c@MaxMatrixCols c]{%
	\hskip -\arraycolsep
	\let\@ifnextchar\new@ifnextchar
	\array{#1}}
\title{Near-miss Identities and Spinor Genus Classification of Ternary Quadratic Forms with Congruence Conditions}
\author{Kush Singhal}
\date{}
\begin{document}
	\maketitle
	\begin{abstract}
		In this paper, near-miss identities for the number of representations of some integral ternary quadratic forms with congruence conditions are found and proven. The genus and spinor genus of the corresponding lattice cosets are then classified. Finally, a complete genus and spinor genus classification for all conductor 2 lattice cosets of 2-adically unimodular lattices is given.
	\end{abstract}
	
	\section{Introduction}
	
	For $\vec a \in \N^3$ and $\vec x \in \Z^3$, we consider the ternary quadratic form \[Q_{\vec a}(\vec x) = a_1x_1^2 + a_2x_2^2 + a_3x_3^2\]
	We may study the number of \emph{representations} $\vec x \in \Z^3$ of $n$ by $Q_{\vec a}$, i.e. solutions to $Q_{\vec a}(\vec x) = n$. The number of such representations is denoted by $r_{\vec a}(n)$. We may also consider representations of $n$ by $Q_{\vec a}$ under certain congruence conditions, i.e. given a modulus $N\in \N$ and a vector $\vec h \in \N^3$, we can consider \[r_{\vec a, \vec h, N}(n) = \#\{\vec x \in \Z^3 : Q_{\vec a}(\vec x) = n, \vec x \equiv \vec h \modc{N}\}.\]
	
	Representations with congruence conditions are useful, for instance, in the study of quadratic polynomials, which are generalizations of quadratic forms with a non-zero linear contribution of $\vec x$. By completing squares, many quadratic polynomials may be transformed into quadratic forms under congruence conditions (see for instance \cite{chan_oh} and \cite{haensch_kane}).
	
	For certain special values of $\vec a$, formulae for $r_{\vec a}(n)$ have been found and proven in \cite{kane_bringmann}. Specifically, these special values of $\vec a$ are those which have class number 1, i.e. where quadratic forms that are $p$-adically equivalent to $Q_{\vec a}$ are integrally equivalent as well. Moreover, formulae for $r_{\vec a, \vec h, N}(n)$ of the form 
	\begin{equation}\label{eqn::coeff_form_clsnbr1}
		r_{\vec a, \vec h, N}(n) = d_{\vec a, \vec h, N}(n) r_{\vec a}(n)
	\end{equation}
	are also given in \cite{kane_bringmann} for certain values of $\vec a, \vec h$, and $N$, where $d_{\vec a, \vec h, N}(n)$ only depends on the value of $n$ modulo $M$ for some fixed $M$ depending on $\vec a$, $\vec h$, and $N$.  
	
	In this paper, we have found and proven a number of similar identities, where instead of holding for all $n\in \N$, these identities hold for almost all $n\in \N$. Explicitly, we have found values of $\vec a, \vec h, N$, and $d_{\vec a, \vec h, N}(n)$ for which an identity of the shape given in equation (\ref{eqn::coeff_form_clsnbr1}) holds for all positive integers $n$ except those lying in a specific square class. This is the content of Theorem \ref{thm::spn_identities_elementary_form}. 
	
	\begin{theorem}\label{thm::spn_identities_elementary_form}
		For all 85 values of $\vec a, \vec h$, and $N$ given in table \ref{table::list_spn1_identities} in appendix \ref{sctn::list_spn1_identities}, the identity  \[r_{\vec a, \vec h, N}(n) = d_{\vec a, \vec h, N}(n) r_{\vec a}(n)\]
		holds for all $n\in \N\backslash \{tk^2 : k\in \N\}$ for some positive integer $t$, where the value of $d_{\vec a, \vec h, N}(n)$ depends only on $n\modc{M}$ for some $M\in \N$. Here, the values of $t$, $M$, and $d_{\vec a, \vec h, N}(n)$ are also given in table \ref{table::list_spn1_identities} (the value of $d_{\vec a, \vec h, N}(n)$ is given under the column heading ``\emph{coeff.}'').
	\end{theorem}
	
	To explain the existence of such identities, we need to consider the lattice coset $L_{\vec a}+ \vec v$ where $\vec v = \frac1N\vec h$ and $L_{\vec a}$ is a lattice on a quadratic space defined by the quadratic form $Q_{\vec a}$ (see Section \ref{sctn::Algebraic_Theory}). Vectors in this lattice coset $L_{\vec a} + \vec v$ correspond to $\vec x\in (\frac1N \Z)^3$ such that $\vec x - \frac1N \vec h \in \Z^3$, which captures the congruence conditions stated above. We also consider the theta series $\Theta_{L'+\vec v'}$ of an arbitrary lattice coset $L'+\vec v'$, which are generating functions of the value $\#\{\vec x\in L'+v': Q_{L'}(\vec x) = n\}$. In the case of $L_{\vec a} + \vec v$ (with $\vec v = \frac1N \vec h$ and $\vec h \in L_{\vec a}$), the corresponding theta series is simply a generating function for $r_{\vec a, \vec h, N}(n)$. Identities of the form of equation (\ref{eqn::coeff_form_clsnbr1}) which hold for all $n\in \N$ (given in \cite{kane_bringmann}) can then be seen as resulting from the generalization of the Siegel-Weil Mass Formula to the case of lattice cosets (proven by Shimura in \cite{shimura}), one version of which may be stated as: 
	\begin{equation}\label{eqn::siegel_weil_mass_formula}
		\mc E = \mc E_{\gen^+(L_{\vec a} + \vec v)} := \frac{1}{\sum_{L'+ \vec v' \in \mc G} |O^+(L'+ \vec v')|^{-1}} \sum_{L'+\vec v' \in \mc G} \frac{\Theta_{L' + \vec v'}}{|O^+(L'+\vec v')|} 
	\end{equation}
	is an Eisenstein series, where $\mc G$ is a set of representatives of the classes in the genus of $L_{\vec a}+\vec v$, and $O^+(L'+\vec v')$ is the set of integral rotations of $L'+\vec v'$ (see Section \ref{sctn::classifications}). If $L_{\vec a} + \vec v$ has class number 1, i.e. if there is only one class in the genus of $L_{\vec a} + \vec v$, then the average given on the right collapses to simply the theta series of $L_{\vec a} + \vec v$. Since Eisenstein series are generally well-known, comparing coefficients would then yield the desired identities involving $r_{\vec a, \vec h, N}(n)$ and $r_{\vec a}(n)$.
	
	A further generalization of equation (\ref{eqn::siegel_weil_mass_formula}) has also been conjectured in \cite{haensch_kane}, which may be stated as
	\begin{equation}\label{eqn::conjecture_avg_over_spn}
		\frac{1}{\sum_{L'+ \vec v' \in \mc S} |O^+(L'+ \vec v')|^{-1}} \sum_{L'+\vec v' \in \mc S} \frac{\Theta_{L' + \vec v'}}{|O^+(L'+\vec v')|} = \mc E_{\gen^+(L_{\vec a} + \vec v)} + \mc U_{\spn^+(L_{\vec a} + \vec v)}
	\end{equation}
	where $\mc S$ is a set of class representatives in the spinor genus of $L_{\vec a} + \vec v$ (see Section \ref{sctn::Algebraic_Theory}), $\mc E_{\gen^+(L_{\vec a} + \vec v)}$ is as given in equation (\ref{eqn::siegel_weil_mass_formula}), and $\mc U_{\spn^+(L_{\vec a} + \vec v)}$ is a linear combination of unary theta series (see Section \ref{sctn::theta_series}). The coefficients of a unary theta series are zero except for those corresponding to a specific square class, and so the coefficients of $\mc U$ are zero except for some specific square classes. It is known that equation (\ref{eqn::conjecture_avg_over_spn}) holds in the case of lattices (i.e. when $\vec v\in L_{\vec a}$), and this conjecture is a generalization to lattice cosets. Again, if our lattice coset has only one class in its spinor genus, then the average on the left collapses as before, thus giving us an equation of the form 
	\begin{equation}\label{eqn::pseudo_spn_class1_identity}
		\Theta_{\vec a, \vec h, N} = \mc E + \mc U
	\end{equation}
	where $\Theta_{\vec a, \vec h, N}$ is the generating function of $r_{\vec a, \vec h, N}(n)$. This again yields an identity involving $r_{\vec a, \vec h, N}(n)$, but now we have some correction terms (given by coefficients of $\mc U$) corresponding to certain square classes of $n$. Of course, if the lattice coset has class number 1 as well, then $\mc U \equiv 0$. Since we want to consider examples with non-trivial $\mc U$, throughout the rest of the paper whenever a lattice coset is said to have spinor class number 1, what is really meant is that it has spinor class number 1 but not class number 1. Equation (\ref{eqn::pseudo_spn_class1_identity}) leads to the following definition:
	\begin{definition}
		An identity involving $\Theta_{\vec a, \vec h, N}$ of the form given in equation (\ref{eqn::pseudo_spn_class1_identity}) to be a \emph{pseudo spinor class 1 identity}.
	\end{definition}
	Thus lattice cosets with spinor class number 1 (but not class number 1) are expected to satisfy pseudo spinor class 1 identities with non-zero $\mc U$. In view of the above discussion and definition, Theorem \ref{thm::spn_identities_elementary_form} may be reformulated and made more concrete. This is done in Theorem \ref{thm::spn_identities_concrete_form}, a proof of which is given in Section \ref{sctn::comp_search}. Theorem \ref{thm::spn_identities_elementary_form} follows immediately from Theorem \ref{thm::spn_identities_concrete_form}.
	
	To illustrate, consider $\vec a = (1,1,1)^T$, $\vec h = (1,0,0)^T$, and $N=4$. Phrasing this in elementary terms, given $n$, we want to count the number of $(x,y,z)$ such that $n = x^2+y^2+z^2$, where $x\equiv 1\modc{4}$, and $y$ and $z$ are divisible by 4. By Proposition \ref{prop::2cls_in_genus}, the corresponding lattice coset has spinor class number 1. As expected, the theta series $\Theta_{\vec a, \vec h, 4}$ is found to satisfy a pseudo spinor class 1 identity, which is given in Appendix \ref{sctn::list_spn1_identities}. Comparing coefficients then yields the identity \[ r_{\vec a, \vec h, 4}(n) = \begin{cases} 0 & \text{if } n \not \equiv 1 \modc{8},\\ \frac1{12}r_{\vec a}(n) & \text{if } n \equiv 1 \modc{8} \text{ and } n \ne k^2 \text{ for all } k\in\N \text{,} \\ \frac{1}{12}r_{\vec a}(n) + \frac{(-1)^{(k-1)/2}}{2} k & \text{if } n \equiv 1 \modc{8} \text{ and } n = k^2 \text{ for some odd } k\in\N. \end{cases} \]
	In the notation of Theorem \ref{thm::spn_identities_elementary_form}, we have $t=1$, $M=8$, and $d_{\vec a, \vec h, 4}(n) = \frac1{12}$ for $n\equiv 1\modc{8}$ and $d_{\vec a, \vec h, 4}(n)=0$ otherwise. Since $r_{\vec a}(n)$ is well-known (see \cite{kane_bringmann}), we now have a formula for $r_{\vec a, \vec h, 4}(n)$. An example of a more complicated identity is given by $$r_{\vec a,\vec h, 6}(n) = \begin{cases} 0 & \text{if } n\not \equiv9 \modc{36} \text{, or if } n \equiv 81 \modc{108},\\
		\frac{1}{8} r_\vec a(n)& \text{if } n\equiv9,45 \modc{216} \text{ and } n\ne 9k^2 \text{ where } k \equiv \pm 1\modc{6}, \\
		\frac{1}{8} r_\vec a(n)+ \frac12 k & \text{if } n\equiv9 \modc{216} \text{ and } n= 9k^2 \text{ for some } k\equiv 1\modc{6},\\
		\frac{1}{8} r_\vec a(n)- \frac12 k& \text{if } n\equiv9 \modc{216} \text{ and } n= 9k^2 \text{ for some } k\equiv 5\modc{6},\\
		\frac{1}{16} r_\vec a(n)& \text{if } n\equiv117 \modc{216},\\
		\frac{1}{4} r_\vec a(n)& \text{if } n\equiv153 \modc{216},
	\end{cases} $$
	where $\vec a = (1,3,9)^T$ and $\vec h = (0,0,1)^T$. The value of $d_{\vec a, \vec h, 4}(n)$ can be determined from the above expression. As mentioned previously, formulae for $r_{\vec a}(n)$ involving Hurwitz class numbers may be found in \cite{kane_bringmann}. 
	
	For most of the values $\vec a, \vec h, N$ in Theorem \ref{thm::spn_identities_concrete_form}, a spinor classification has also been found to check whether these lattice cosets have spinor class number 1. While in most cases the lattice coset indeed had spinor class number 1, in certain cases there were multiple classes in the spinor genus (see the discussion after Proposition \ref{prop::spnnum=3} in Section \ref{sctn::classify_found_cosets}). Thus the converse of equation (\ref{eqn::conjecture_avg_over_spn}), i.e. whether a lattice coset has only one class in its spinor genus if its theta series satisfies a pseudo spinor class 1 identity as given in equation (\ref{eqn::pseudo_spn_class1_identity}), is false.
	
	The method used to find the spinor classification of lattice cosets relies on the computation of the image of the group of $p$-adic rotations under the spinor norm map (see Sections \ref{sctn::Algebraic_Theory} and \ref{sctn::lattice_cosets} for details). Unlike in the case of lattices for which an algorithm is known, the 2-adic image under the spinor norm map in the case of lattice cosets cannot be computed very easily, and in some cases, it was not possible at all with currently known results. Assuming one can overcome this difficulty in computing the image under the spinor norm map, Section \ref{sctn::lattice_cosets} gives a technique for computing the spinor genus. 
	
	On the other hand, the genus classification is quite simple to carry out, and Section \ref{sctn::lattice_cosets} gives an algorithm which computes the genus of a lattice coset. This algorithm relies on Proposition \ref{prop::matrix_reduction_of_isometries}, which characterizes the group of $p$-adic rotations of a rank 3 lattice by a group of $3\times 3$ matrices modulo a power of the prime $p$. This latter group is straightforward to compute by hand in small cases (see for example Section \ref{sctn::classify_cond2}), and by computer in larger cases. The action of the group of $p$-adic rotations on a lattice coset is equivalent to the action of this group of matrices on 3-dimensional vectors whose entries lie in $\Z/p^k\Z$ for some $k$ (given by the proposition). Even though the algorithm given assumes the lattice $L_{\vec a}$ is of class number 1, it may be generalized to lattices with higher class numbers by simply running the algorithm on lattice cosets of a class representative of all classes in the genus of $L_{\vec a}$. 
	
	The paper is structured as follows: In Section \ref{sctn::preliminaries}, some important and useful results from the analytic and algebraic theory of quadratic forms have been recalled. Section \ref{sctn::comp_search} discusses pseudo spinor class 1 identities that were found, and also gives a proof of all 85 identities (see Theorem \ref{thm::spn_identities_concrete_form}). Section \ref{sctn::lattice_cosets} then goes into detail regarding the classification of the found lattice cosets, along with some general discussion on the classification of lattice cosets. In particular, Section \ref{sctn::classify_found_cosets} gives a classification of many of the lattice cosets satisfying pseudo-spinor class 1 identities, and Section \ref{sctn::classify_cond2} classifies the genera and spinor genera of all 2-adically unimodular lattice cosets with conductor 2. Finally, Appendix \ref{sctn::list_spn1_identities} lists all 85 of the found identities, and Appendices \ref{sctn::gen_info}, \ref{sctn::upperbnd_img_spn_norm}, \ref{sctn::grouping_spn_gen} contain information about the genus, the image of the spinor norm map, and the action of (adelized) rotations on lattice cosets respectively, all of which are required in the genus and spinor genus classification given in Section \ref{sctn::classify_found_cosets}. 
	
	\subsection*{Acknowledgements}
	
	The author would like to thank Dr Ben Kane for all of the generous help and advice given throughout this project. This project was supported by the Summer Research Fellowship scheme of the Faculty of Science at HKU.  
	
	\section{Preliminaries}\label{sctn::preliminaries}
	\subsection{Analytic Theory}\label{sctn::Analytic_Theory}
	\subsubsection{Modular Forms} 
	Some important definitions and preliminary results for modular forms are reproduced here; for further details one may refer to any book on modular forms (for instance \cite{koblitz} or \cite{ono}).
	
	For $\gamma = \left( \begin{smallmatrix} a&b\\c&d \end{smallmatrix} \right) \in SL_2(\Z)$ and $k\in \Z$, define the \emph{slash} operator on a meromorphic function $f:\mathbb{H}\to \C$ by \[f\big|[\gamma]_{k/2}:= j(\gamma, z)^{-k} f\left(\frac{az+b}{cz+d} \right)\]
	where $j(\gamma, z) := \left( \tfrac{c}{d}\right) \epsilon_d^{-1} \sqrt{cz+d} $, and where $\epsilon_d$ is 1 if $d\equiv 1\modc{4}$ and is $i$ if $d\equiv -1 \modc{4}$. Define the following \emph{congruence subgroups} of $SL_2(\Z)$
	\begin{align*}
		\Gamma_0(N) &:= \left\{\left( \begin{smallmatrix} a&b\\c&d \end{smallmatrix} \right) \in SL_2(\Z) : N|c \right\},\\
		\Gamma_1(N) &:= \left\{\left( \begin{smallmatrix} a&b\\c&d \end{smallmatrix} \right) \in \Gamma_0(N) : a,d\equiv 1\modc{N} \right\}.
	\end{align*}
	\begin{definition}[Modular Forms]
		Suppose $k\in \Z$, $\chi:\N\to \C^*$ a Dirichlet character, and $\Gamma \le \Gamma_0(4)$ a (congruence) subgroup of finite index containing $\left( \begin{smallmatrix} 1&1\\0&1 \end{smallmatrix} \right)$. Suppose $f:\mathbb{H} \to \C$ is a meromorphic function satisfying \[f\big|[\gamma]_{k/2} = \chi(d)f \qquad \qquad \forall \gamma = \left( \begin{smallmatrix} a&b\\c&d \end{smallmatrix} \right) \in \Gamma.\]
		Such a meromorphic function $f$ is said to satisfy \emph{modularity of weight $k/2$ and character $\chi$ for $\Gamma$}.\\
		If $f:\mathbb{H}\to \C$ is a holomorphic function satisfying modularity of weight $k/2$ and character $\chi$ for the congruence subgroup $\Gamma$, and if $f|[\gamma]_{k/2}$ is bounded as $z$ approaches $i\infty$ for all $\gamma\in SL_2(\Z)$, then $f$ is said to be a \emph{modular form} of weight $k/2$ and character $\chi$ for $\Gamma$. The space of modular forms of weight $k/2$, character $\chi$, and congruence subgroup $\Gamma$, will be denoted by $\M_{k/2}(\Gamma, \chi)$.
	\end{definition}
	
	If $f$ is a modular form (of some weight and character) for the congruence subgroup $\Gamma$ containing $\left( \begin{smallmatrix} 1&1\\0&1 \end{smallmatrix} \right)$, then $f$ admits a Fourier series expansion \[f(z) = \sum_{n=0}^\infty a_n q^n.\] Here, $q:= e^{2\pi i z}$.
	
	It is well-known that the space of modular forms $\M_{k/2}(\Gamma, \chi)$ is a finite-dimensional complex vector space. Moreover, as a result of the \emph{valence formula} for modular forms, we have the following lemma (c.f. \cite[Lemma 2.1]{kane_bringmann}). 
	\begin{lemma}\label{lem::valence_formula}
		Suppose $f(z) = \sum_{n=0}^\infty a_n q^n$ is a modular form of weight $k/2$ and character $\chi$ for the congruence subgroup $\Gamma$, and if $a_n = 0$ for all $n\le \frac{k}{12}[SL_2(\Z) : \Gamma]$, then $f$ is identically zero.
	\end{lemma}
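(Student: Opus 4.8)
The plan is to read the claim off the valence formula for (half‑integral weight) modular forms on congruence subgroups; the hypothesis $\left(\begin{smallmatrix}1&1\\0&1\end{smallmatrix}\right)\in\Gamma$ built into the definition of a modular form here is exactly what keeps the contribution at the cusp $\infty$ bookkeepable. Recall that the valence formula asserts that for any nonzero meromorphic modular form $g$ of weight $\kappa$ (and some character) for $\Gamma$, the total weighted order of vanishing $\sum_{P}\ord_P(g)$ — the sum running over representatives $P$ for the orbits of $\Gamma$ on $\mathbb{H}$ together with the cusps, each elliptic point and cusp weighted in the usual way by the reciprocal of its order, resp.\ width — equals $\frac{\kappa}{12}\,[PSL_2(\Z):\overline{\Gamma}]$, and the index here satisfies $[PSL_2(\Z):\overline{\Gamma}]\le[SL_2(\Z):\Gamma]$.

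Now suppose, for contradiction, that $f=\sum_{n\ge 0}a_nq^n\in\M_{k/2}(\Gamma,\chi)$ is not identically zero, and apply the valence formula with $\kappa=k/2$. Since $f$ is holomorphic on $\mathbb{H}$ and bounded at every cusp, every term $\ord_P(f)$ in the sum is $\ge 0$. Because $\left(\begin{smallmatrix}1&1\\0&1\end{smallmatrix}\right)\in\Gamma$ acts trivially under the slash operator (one has $j(\left(\begin{smallmatrix}1&1\\0&1\end{smallmatrix}\right),z)=1$), the cusp $\infty$ has width $1$ and trivial multiplier, so $f$ is genuinely $1$-periodic and $\ord_\infty(f)=\min\{n:a_n\ne 0\}$, a non-negative integer. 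Keeping only this term and discarding the other non-negative contributions gives
\[
\min\{n:a_n\ne 0\}\;\le\;\sum_P\ord_P(f)\;=\;\frac{k}{24}\,[PSL_2(\Z):\overline{\Gamma}]\;\le\;\frac{k}{24}\,[SL_2(\Z):\Gamma]\;\le\;\frac{k}{12}\,[SL_2(\Z):\Gamma].
\]
But by hypothesis $a_n=0$ for every $n\le\frac{k}{12}[SL_2(\Z):\Gamma]$, hence in particular $a_{\min\{n:a_n\ne 0\}}=0$, a contradiction; therefore $f\equiv 0$.

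The only genuine obstacle is getting the constant and the normalizations right, since there are several factors of $2$ floating around: the weight here is $k/2$ (so the valence total is $\frac{k}{24}$ times an index, not $\frac{k}{12}$), the relevant index is $[PSL_2(\Z):\overline{\Gamma}]$ rather than $[SL_2(\Z):\Gamma]$ (they differ by $2$ precisely when $-I\notin\Gamma$), and for half-integral weight the orders of vanishing at irregular cusps need not be integers. Fortunately every one of these effects only makes the true bound \emph{smaller} than the stated $\frac{k}{12}[SL_2(\Z):\Gamma]$, so all the inequalities point the right way and the crude factor in the statement leaves ample slack; a reader need only invoke whichever precise form of the valence formula is convenient (e.g.\ from \cite{koblitz} or \cite{ono}). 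If one prefers to avoid half-integral subtleties at the other cusps altogether, one may instead apply the classical integral-weight valence formula to $f^2\in\M_k(\Gamma,\chi^2)$, which vanishes at $\infty$ to order $2\ord_\infty(f)$ — the slack in the stated bound easily absorbs the loss.
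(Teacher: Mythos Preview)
Your argument is correct and is exactly the standard derivation from the valence formula; the paper itself does not give a proof but simply records the lemma with a pointer to \cite[Lemma 2.1]{kane_bringmann}. The key points you use --- that $\left(\begin{smallmatrix}1&1\\0&1\end{smallmatrix}\right)\in\Gamma$ forces width $1$ at $\infty$ so that $\ord_\infty(f)=\min\{n:a_n\ne 0\}$, and that all other contributions are nonnegative --- are precisely what one needs, and your observation that the stated bound $\tfrac{k}{12}[SL_2(\Z):\Gamma]$ carries a built-in factor of (at least) $2$ of slack over the sharp valence total $\tfrac{k}{24}[PSL_2(\Z):\overline{\Gamma}]$ correctly absorbs the various normalization ambiguities (half-integral weight, $-I\in\Gamma$ or not, irregular cusps). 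The fallback of applying the integral-weight valence formula to $f^2\in\M_k(\Gamma,\chi^2\chi_{-4}^k)$ is also perfectly valid and perhaps the cleanest way to sidestep half-integral subtleties entirely.
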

	In particular, this means that to prove any identity involving modular forms, it suffices to show that the first few coefficients of the corresponding Fourier series expansions are equal.
	
	\subsubsection{Theta Series}\label{sctn::theta_series}
	For the quadratic form $Q_{\vec a}$ with the congruence conditions $\vec h$ and $N$, we may consider the generating function of $r_{\vec a, \vec h, N}(n)$, which is a \emph{theta series} given by \[\Theta_{\vec a, \vec h, N}(z) := \sum_{n=0}^\infty r_{\vec a, \vec h, N}(n) q^n\]
	(where $q:= e^{2\pi i z}$). Without congruence conditions, we have \[\Theta_{\vec a}(z) = \Theta_{\vec a, \vec 0, 1}(z) = \sum_{n=0}^\infty r_{\vec a}(n)q^n.\]
	
	Let $\chi_{m} := \left(\tfrac{m}{\cdot}\right)$ denote the Kronecker symbol; this is a Dirichlet character if $m\ne 0$ and $m\not\equiv 3\modc{4}$. Also, let $\Gamma_{N,M}$ denote the congruence subgroup given by \[\Gamma_{N,M} := \Gamma_0(N) \cap \Gamma_1(M) = \{\left( \begin{smallmatrix} a&b\\c&d \end{smallmatrix} \right) \in SL_2(\Z): a,d\equiv 1\modc{M}, M|c, N|c\}.\]
	It is easy to check that whenever $4|N$, the congruence subgroup $\Gamma_{N,M}$ is a subgroup of $\Gamma_0(4)$ containing $\left( \begin{smallmatrix} 1&1\\0&1 \end{smallmatrix} \right)$.
	
	We have the following lemma (for a proof see, for instance, \cite{kane_bringmann}).
	\begin{lemma}\label{lem::ThetaSeries}
		The theta series $\Theta_{\vec a, \vec h, N}$ is a modular form of weight $3/2$ and character $\chi_{4a_1a_2a_3}$ for the congruence subgroup $\Gamma_{4N^2l, N}$, where $l = l_{\vec a} := \lcm(a_1, a_2, a_3)$.\\
		In particular, the theta series $\Theta_{\vec a}$ is a modular form of weight $3/2$ and character $\chi_{4a_1a_2a_3}$ for the congruence subgroup $\Gamma_{4l,1} = \Gamma_0(4l)$.
	\end{lemma}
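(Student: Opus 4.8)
The plan is to exploit the fact that $Q_{\vec a}$ is diagonal, so that (with $N$ fixed) $\Theta_{\vec a,\vec h,N}$ factors as a product of three unary theta series with congruence conditions,
\[
\Theta_{\vec a,\vec h,N}(z)\;=\;\prod_{j=1}^{3}\vartheta_{a_j,h_j}(z),\qquad \vartheta_{a,h}(z)\;:=\;\sum_{\substack{x\in\Z\\ x\equiv h\modc{N}}}q^{a x^2}.
\]
Since the automorphy factor $j(\gamma,z)^{-k}$ in the slash operator is multiplicative in the weight $k$, a product of modular forms of weights $k_1/2$ and $k_2/2$ and characters $\chi_1,\chi_2$ for congruence subgroups $\Gamma_1,\Gamma_2$ is again a modular form, of weight $(k_1+k_2)/2$ and character $\chi_1\chi_2$ for $\Gamma_1\cap\Gamma_2$ (holomorphy and boundedness at the cusps are clearly preserved by products). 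Thus it suffices to prove the rank-one statement $\vartheta_{a,h}\in\M_{1/2}(\Gamma_{4aN^2,N},\chi_{4a})$ and then reassemble: since $\bigcap_j\Gamma_{4a_jN^2,N}=\Gamma_{4N^2l,N}$ (because $\lcm_j(4a_jN^2)=4N^2l$) and $\prod_j\chi_{4a_j}(d)=\left(\tfrac{a_1a_2a_3}{d}\right)=\chi_{4a_1a_2a_3}(d)$ for the odd $d$ that actually occur (note $4N^2l\mid c$ forces $d$ odd), the assertion for $\Theta_{\vec a,\vec h,N}$ follows; the statement for $\Theta_{\vec a}$ is the special case $\vec h=\vec 0$, $N=1$.

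For the unary series, substituting $x=Ny+h$ rewrites $\vartheta_{a,h}(z)=\sum_{y\in\Z}e^{2\pi i a(Ny+h)^2 z}$ as a classical theta constant carrying the rational characteristic $h/N$ and attached to the positive-definite form $t\mapsto a t^2$. I would then run through the standard theta-transformation machinery: (i) invariance under $z\mapsto z+1$, which is immediate since every exponent $a(Ny+h)^2$ is an integer, consistent with $j\!\left(\left(\begin{smallmatrix}1&1\\0&1\end{smallmatrix}\right),z\right)=1$ and $\chi_{4a}(1)=1$; (ii) the transformation of $\vartheta_{a,h}$ under the Fricke-type involution $z\mapsto -1/(4aN^2 z)$, obtained by Poisson summation, which yields a factor of the shape $(\mathrm{const}\cdot z)^{1/2}$ times a Gauss sum in $h$ modulo $4aN^2$; (iii) composing translations with (ii) to realise an arbitrary $\gamma=\left(\begin{smallmatrix}a'&b'\\c'&d'\end{smallmatrix}\right)\in\Gamma_{4aN^2,N}$ and checking that the accumulated eighth roots of unity and Gauss sums collapse to exactly $\chi_{4a}(d')\,j(\gamma,z)$, the condition $a'\equiv d'\equiv 1\modc{N}$ of $\Gamma_1(N)$ being precisely what returns the characteristic $h/N$ to itself modulo $1$, so that $\vartheta_{a,h}$ transforms into a scalar multiple of itself; and (iv) holomorphy and boundedness at every cusp, inherited from the $q$-expansion at $\infty$ (supported on non-negative exponents) via the transformation laws of (ii)--(iii), which for a positive-definite form in fact make the transformed series decay. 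This is all classical --- it goes back to Schoeneberg, and to Shimura in the half-integral-weight setting --- and a treatment adapted to the present notation can be found in \cite{kane_bringmann}.

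The hard part is step (iii): pinning down the metaplectic multiplier system. One must verify that the product of the various eighth roots of unity and normalised Gauss sums picked up along a word in the generators is exactly the standard theta multiplier $\chi_{4a}(d')\left(\tfrac{c'}{d'}\right)\epsilon_{d'}^{-1}\sqrt{c'z+d'}$ --- that the resulting cocycle is not off by a quadratic twist, nor carries a character of the wrong conductor --- and that the $h$-dependent Gauss sums become trivial exactly once $\gamma\in\Gamma_1(N)$. This is precisely the computation underlying the classical theta-multiplier formula; in practice one therefore invokes the general theorem that the theta series of a positive-definite lattice coset is a modular form of the expected weight, level and character, of which the present lemma is the diagonal rank-$3$ case with trivial spherical polynomial.
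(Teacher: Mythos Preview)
Your proposal is correct: the diagonal factorisation $\Theta_{\vec a,\vec h,N}=\prod_{j=1}^3\vartheta_{a_j,h_j}$ together with the classical transformation law for the unary $\vartheta_{a,h}$ is exactly the standard route, and your bookkeeping for the level $\bigcap_j\Gamma_{4a_jN^2,N}=\Gamma_{4N^2l,N}$ and the character $\prod_j\chi_{4a_j}=\chi_{4a_1a_2a_3}$ on odd $d$ is accurate. The paper itself does not give a proof at all --- it simply cites \cite{kane_bringmann} --- so your sketch is in fact more detailed than the paper's treatment; you and the paper ultimately appeal to the same classical theta-transformation result.
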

	In view of Lemma \ref{lem::valence_formula}, the following will be useful (for a proof see \cite[Lemma 2.2]{kane_bringmann}).
	\begin{lemma}\label{lem::grpindex_congsubgrp}
		If $N,M\in \N$ with $M|N$, then $ [SL_2(\Z) : \Gamma_{N,M}] = N M \left(\prod_{p|N} \left(1 + \tfrac1p\right)\right) \left(\prod_{p|M}\left(1 - \tfrac1p\right) \right) $.
	\end{lemma}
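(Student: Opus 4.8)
The plan is to split the index along the chain $\Gamma_{N,M}\le\Gamma_0(N)\le SL_2(\Z)$ and to combine it with the well-known value $[SL_2(\Z):\Gamma_0(N)] = N\prod_{p\mid N}\!\left(1+\tfrac1p\right)$ (which follows, for instance, from the bijection $SL_2(\Z)/\Gamma_0(N)\cong\mathbb{P}^1(\Z/N\Z)$). The first point to notice is that, because $M\mid N$, the clause ``$M\mid c$'' in the definition of $\Gamma_{N,M}$ is subsumed by ``$N\mid c$'', and for $\gamma=\left(\begin{smallmatrix} a&b\\c&d\end{smallmatrix}\right)\in\Gamma_0(N)$ the identity $ad-bc=1$ together with $M\mid c$ gives $ad\equiv1\modc{M}$; hence $d$ is a unit modulo $M$ with inverse $a$, so each of the congruences $a\equiv1\modc{M}$ and $d\equiv1\modc{M}$ is equivalent to $d\equiv1\modc{M}$. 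Consequently
\[
\Gamma_{N,M} \;=\; \ker\!\left(\psi : \Gamma_0(N)\to(\Z/M\Z)^{\times}\right),\qquad \psi\!\left(\left(\begin{smallmatrix} a&b\\c&d\end{smallmatrix}\right)\right) = d \bmod M.
\]

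Next I would verify that $\psi$ is a well-defined surjective group homomorphism. It lands in $(\Z/M\Z)^{\times}$ since $ad\equiv1\modc{M}$. The bottom-right entry of a product $\left(\begin{smallmatrix} a_1&b_1\\c_1&d_1\end{smallmatrix}\right)\left(\begin{smallmatrix} a_2&b_2\\c_2&d_2\end{smallmatrix}\right)$ equals $c_1b_2+d_1d_2$, and $N\mid c_1$ with $M\mid N$ forces $c_1b_2\equiv0\modc{M}$, so this entry is $\equiv d_1d_2\modc{M}$; thus $\psi$ is multiplicative. For surjectivity, given a unit class $u$ modulo $M$, I would choose an integer $d_0\equiv u\modc{M}$ with $\gcd(d_0,N)=1$: a prime $p\mid M$ cannot divide $d_0$ since $\gcd(u,M)=1$, while for a prime $p\mid N$ with $p\nmid M$ the residue of $d_0$ modulo $p$ can be prescribed independently of the class of $d_0$ modulo $M$, and the Chinese Remainder Theorem makes all these conditions simultaneously satisfiable. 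Then $\gcd(d_0,N)=1$ yields integers $a,b$ with $ad_0-bN=1$, and $\left(\begin{smallmatrix} a&b\\N&d_0\end{smallmatrix}\right)\in\Gamma_0(N)$ has $\psi$-image $u$.

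Combining these gives $[\Gamma_0(N):\Gamma_{N,M}] = |(\Z/M\Z)^{\times}| = \phi(M)$, whence, by multiplicativity of the index in a tower of subgroups,
\[
[SL_2(\Z):\Gamma_{N,M}] = [SL_2(\Z):\Gamma_0(N)]\cdot[\Gamma_0(N):\Gamma_{N,M}] = \phi(M)\,N\prod_{p\mid N}\!\left(1+\tfrac1p\right);
\]
inserting $\phi(M) = M\prod_{p\mid M}\!\left(1-\tfrac1p\right)$ then produces exactly the claimed expression. One should double-check the degenerate cases $M=1$ and $M=2$ (where $\Gamma_1(M)=\Gamma_0(M)$), but the formula is readily seen to remain consistent there.

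The only step with any genuine content is the surjectivity of $\psi$ — more precisely, the elementary construction of the integer $d_0$; once that is granted, everything else is bookkeeping with the definitions of the congruence subgroups and with the standard index of $\Gamma_0(N)$. Alternatively, one may simply invoke the known fact that $\gamma\mapsto d\bmod M$ maps $\Gamma_0(N)$ onto $(\Z/M\Z)^{\times}$, which is the route taken in \cite{kane_bringmann}.
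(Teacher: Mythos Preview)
Your argument is correct and is precisely the standard proof: factor the index through $\Gamma_0(N)$, identify $\Gamma_{N,M}$ as the kernel of the surjection $\Gamma_0(N)\to(\Z/M\Z)^\times$ given by $\gamma\mapsto d\bmod M$, and multiply $[SL_2(\Z):\Gamma_0(N)]=N\prod_{p\mid N}(1+1/p)$ by $\phi(M)$. The paper does not give its own proof of this lemma but simply cites \cite[Lemma~2.2]{kane_bringmann}, whose argument is exactly the one you have reproduced, so there is nothing to compare.
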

	
	We will also consider \emph{unary theta series}.
	\begin{definition}[Unary Theta Series]
		Given a character $\chi$ and an integer $t\in \N$, the \emph{unary theta series} $\theta_{\chi, t}$ is given by the following Fourier series expansion \[\theta_{\chi_{}, t}(z) = \sum_{n=0}^\infty \chi_{}(n) n q^{tn^2}.\]
		It is a modular form of weight $3/2$ and character $\chi_{}\chi_{-4t}$ for the congruence subgroup $\Gamma_0(4tf_{\chi_{}^2}) $, where $f_{\chi_{}}$ is the conductor of the character $\chi$ (this follows from \cite[Theorem 1.44]{ono} and \cite[Proposition 2.22(1)]{ono}).
	\end{definition}
	
	\subsubsection{Operators on Coefficients of Modular Forms}\label{sctn::sieve_modular_forms}
	In this paper, we will need to consider the following sieving operator defined using the Fourier series expansion of a modular form: \[f\big| S_{M,m}(z) = \sum_{n\equiv m\modc{M}} a_nq^n\]
	where $M,m\in \N$ are given, and $f(z) = \sum_{n=1}^\infty a_nq^n$ (with $q:= e^{2\pi i z}$) is the Fourier expansion of the modular form $f$. The following lemma describes the action of the sieve $|S_{M,m}$ on spaces of modular forms.
	\begin{lemma}\label{lem::sieve_modularity}
		Suppose $f\in \M_{k/2}(\Gamma_{N,L}, \chi_{})$ for some $N,L\in \N$, some weight $k/2$, and some character $\chi$ with conductor $f_{\chi_{}}$. Then, for any $M,m\in \N$, the function $f|S_{M,m}$ is a modular form. More precisely, we have $f|S_{M,m} \in \M_{k/2}(\Gamma_{N',L'}, \chi)$, where \[N' = \begin{cases}
			\lcm(N,M^2, f_{\chi_{}} M, ML) & \text{if } M \not\equiv 2\modc{4},\\
			\lcm(N,4M^2, f_{\chi_{}} M, ML) & \text{otherwise},\\
		\end{cases} \quad \text{ and } \quad L' = \begin{cases}
			L & \text{if } M|24,\\
			\lcm(M, L) & \text{otherwise.}	
		\end{cases}\]
	\end{lemma}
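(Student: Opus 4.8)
The plan is to realize $f|S_{M,m}$ as a finite linear combination of fractional translates of $f$, and then to verify the character relation for $\Gamma_{N',L'}$ by a matrix conjugation matched carefully against the half-integral weight automorphy factor. First I would record the elementary identity coming from orthogonality of additive characters: with $\zeta_M := e^{2\pi i/M}$, for all $z\in\mathbb{H}$
\[ f\big| S_{M,m}(z) = \sum_{n\equiv m\modc{M}} a_n q^n = \frac1M\sum_{j=0}^{M-1}\zeta_M^{-jm}\,f\!\left(z+\frac jM\right). \]
Each translate $z\mapsto f(z+j/M)$ is holomorphic on $\mathbb{H}$, and since translation by $j/M$ permutes the cusps of any congruence subgroup while $f$ is bounded at all of them, each translate — hence $f|S_{M,m}$ — is bounded at every cusp. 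So holomorphy and the cusp conditions are automatic, and the whole content is the modularity/character relation.

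For the transformation law, fix $\gamma=\left(\begin{smallmatrix}a&b\\c&d\end{smallmatrix}\right)\in\Gamma_{N',L'}$ and put $T_x:=\left(\begin{smallmatrix}1&x\\0&1\end{smallmatrix}\right)$. The key step is the factorization $T_{j/M}\gamma=\gamma_j'\,T_{j'/M}$ with $\gamma_j'\in SL_2(\Z)$ and $j'\equiv d^2j\modc{M}$, the index $j'$ being forced by integrality of the upper-right entry of $\gamma_j'$ (which uses $ad\equiv1\modc{c}$, hence $a^{-1}\equiv d\modc{M}$ since $M\mid c$). I would then read off which divisibilities of $c$ are needed: $N\mid c$ and $ML\mid c$ so that $\gamma_j'\in\Gamma_0(N)\cap\Gamma_1(L)=\Gamma_{N,L}$; $f_{\chi}M\mid c$ so that $\chi(d-cj'/M)=\chi(d)$; and $M^2\mid c$ — respectively $4M^2\mid c$ when $M\equiv2\modc{4}$ — so that the remaining entry of $\gamma_j'$ is integral and, crucially, so that the half-integral weight factor $j(\gamma_j',z+j'/M)$ equals $j(\gamma,z)$. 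Note here that $c(z+j'/M)+(d-cj'/M)=cz+d$ identically, so the square-root parts already agree, and one only has to ensure $d-cj'/M\equiv d$ both modulo $4$ (for $\epsilon_d$) and modulo the modulus of the Kronecker symbol $\left(\tfrac{c}{\cdot}\right)$. Taking the least common multiple of these conditions on $c$ yields precisely $N'\mid c$ with $N'$ as stated.

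Granting this, using the matching of automorphy factors one gets $f\big|[\,T_{j/M}\gamma\,]_{k/2}(z)=\chi(d)\,f(z+j'/M)$. Summing over $j$ and reindexing by $j'\equiv d^2j$ — a bijection of $\Z/M\Z$ since $\gcd(d,M)=1$ — produces
\[ \big(f|S_{M,m}\big)\big|[\gamma]_{k/2}(z)=\frac{\chi(d)}{M}\sum_{j'}\zeta_M^{-d^{-2}mj'}\,f\!\left(z+\frac{j'}M\right), \]
which equals $\chi(d)\big(f|S_{M,m}\big)(z)$ as soon as $d^{-2}m\equiv m$, i.e. $d^2\equiv1\modc{M/\gcd(M,m)}$. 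For $\gamma\in\Gamma_{N',L'}$ this holds exactly under the stated dichotomy: if $M\mid24$ then $(\Z/M\Z)^{*}$, hence $(\Z/(M/\gcd(M,m))\Z)^{*}$, is $2$-torsion, so $d^2\equiv1$ for every admissible $d$ and $L'=L$ suffices; otherwise one takes $L'=\lcm(M,L)$, which forces $d\equiv1\modc{M}$. Therefore $f|S_{M,m}\in\M_{k/2}(\Gamma_{N',L'},\chi)$.

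The main obstacle is the half-integral weight bookkeeping in the second paragraph: pinning down exactly the power of $2$ dividing $c$ needed for $\left(\tfrac{c}{d-cj'/M}\right)=\left(\tfrac{c}{d}\right)$ and $\epsilon_{d-cj'/M}=\epsilon_d$. This is precisely where $M\equiv2\modc{4}$ is special: for $M$ odd the valuation $v_2(c)\ge2$ forced by $4\mid N\mid c$ already suffices, and for $4\mid M$ the bound $v_2(c)\ge2v_2(M)$ coming from $M^2\mid c$ does, but for $M\equiv2\modc{4}$ one genuinely needs the extra factor of $4$ in $N'$ to raise $v_2(cj'/M)$ to $3$. (Alternatively, when $\gcd(m,M)=1$ one can bypass translates and write $f|S_{M,m}=\varphi(M)^{-1}\sum_{\psi}\overline{\psi}(m)f_\psi$ as a combination of Dirichlet twists, which makes the persistence of the character $\chi$ and the role of the $\Gamma_1(M)$-condition transparent; the general case then reduces to this via the operators $U_{\gcd(m,M)}$ and $V_{\gcd(m,M)}$.)
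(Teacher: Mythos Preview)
The paper does not give its own proof of this lemma; it simply cites \cite[Lemma 2.3]{kane_bringmann}. So there is nothing in the paper to compare your argument against.

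That said, your approach is the standard one and is sound: write the sieve as an average of fractional translates via orthogonality of additive characters, factor $T_{j/M}\gamma=\gamma_j'\,T_{j'/M}$ with $j'\equiv d^2j\pmod{M}$, and read off the divisibility conditions on $c$ needed to (i) land $\gamma_j'$ in $\Gamma_{N,L}$, (ii) preserve $\chi(d)$, and (iii) match the half-integral automorphy factors $j(\gamma_j',z+j'/M)=j(\gamma,z)$. Your bookkeeping is correct, including the identification of $j'$, the $M\mid 24$ dichotomy for $L'$ via the exponent of $(\Z/M\Z)^{\times}$, and the $2$-adic reason for the extra factor of $4$ in $N'$ when $M\equiv 2\pmod{4}$. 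The alternative via Dirichlet twists that you sketch at the end is also a legitimate route to the same conclusion. This is almost certainly the same argument as in the cited reference.
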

	A proof may be found, for instance, in \cite[Lemma 2.3]{kane_bringmann}.
	
	\subsection{Algebraic Theory} \label{sctn::Algebraic_Theory}
	On the algebraic side of things, we will use the theory of quadratic spaces and lattices as described in O'Meara's standard text \cite{omeara}; of course, rather than working in an arbitrary algebraic number field, we will only work in $\Q$ and the $p$-adic numbers $\Q_p$. The ring of integers in $\Q$ and $\Q_p$ will be denoted as usual by $\Z$ and $\Z_p$. We consider a \emph{quadratic space} $V = \Q^k$ which is a vector space equipped with the quadratic form $Q$ and associated symmetric bilinear form $B$. In this paper, we consider free \emph{lattices $L$ on $V$}, which are essentially free $\Z$-modules of $V$ generated by some basis $\{\vec v_1, ..., \vec v_k\}$ for $V$. Given a basis $\{\vec v_1, ..., \vec v_k\}$ for $L$, we may write $L\cong A$ where $A$ is the $k\times k$ Gram matrix $A = (B(\vec v_i, \vec v_j))$. The \emph{discriminant} $dL$ of a lattice is simply the determinant of $A$, and is well-defined and independent of the choice of basis up to multiplication by a unit. The \emph{localization} of a lattice $L$ at the prime $p$ is $L_p = L \otimes_\Z \Z_p$; the localization $L_p$ is a $\Z_p$-lattice in the localization $V_p = \Q_p^k$. The \emph{rank} of a lattice $L$ (resp. $L_p$) is the dimension of the quadratic space $\Q L$ (resp. $\Q_p L_p$), or equivalently is the rank when considered as a free module over $\Z$ (resp. over $\Z_p$). For a lattice $L$ and some $\vec v\in V$, we have the \emph{lattice coset} $L+\vec v$, which may be considered to be a coset in the quotient group $(L + \Z\vec v) / L$. The smallest positive integer $N$ such that $N\vec v \in L$ is called the \emph{conductor} of $\vec v$. 
	
	An \emph{isometry} $\sigma$ of the quadratic space $V$ equipped with the quadratic form $Q$ is a linear transformation $\sigma:V\to V$ satisfying $Q(\sigma \vec v) = Q(\vec v)$ for all $\vec v\in V$; the group of isometries of $V$ is denoted by $O(V)$. The subgroup $O^+(V)$ of \emph{rotations} of $V$ are those isometries with determinant 1. The coordinate representation $X\in SL_k(\Q)$ of a rotation must satisfy $X^TAX = A$, where $L\cong A$ in some basis. We similarly define the groups $O(V_p)$ and $O^+(V_p)$ of $p$-adic isometries and $p$-adic rotations respectively. The subgroup $O^+(L)$ of (integral) rotations of a lattice $L$ is \[O^+(L) := \{\sigma\in O^+(V): \sigma L = L\},\]
	while the subgroup $O^+(L_p)$ of $p$-adically integral rotations of a localized lattice $L_p$ is \[O^+(L_p) := \{\sigma \in O^+(V_p): \sigma L_p = L_p\}.\]
	As a consequence of \cite[82:12]{omeara}, an element of $O^+(V)$ belongs to $O^+(L)$ if its coordinate representation in a $\Z$-basis of $L$ (which serves as a basis for $V$) belongs to $SL_k(\Z)$ ($k=\rank L$). Similarly, an element of $O^+(V_p)$ belongs to $O^+(L_p)$ if its coordinate representation in a $\Z_p$-basis for $L_p$ belongs to $SL_k(\Z_p)$. Finally, the subgroup $O^+(L+\vec v)$ of $O^+(L)$ consists of those rotations $\sigma$ of $L$ satisfying $\sigma(L+\vec v) = L+\vec v$, or equivalently, satisfying $\sigma \vec v - \vec v \in L$. The subgroup $O^+(L_p+\vec v)$ of $O^+(L_p)$ may be defined similarly.
	
	In this paper, we will only be working with lattices and lattice cosets on quadratic spaces with rank 3. For any other unexplained notation please see \cite{omeara}.
		
	\subsubsection{Spinor Norm Map}\label{sctn::spinor_norm_map}
	
	As we will be using the spinor norm map repeatedly in later sections, some important definitions and results are given here. For more details see \cite{omeara}.
	
	For $\vec u\in V_p$, let $\tau_{\vec u}$ denote the \emph{symmetry} in $O(V_p)$ defined by $\tau_{\vec u}\vec x = \vec x - 2\frac{B(\vec u, \vec x)}{Q(\vec u)} \vec u$. It is known that the group $O(V_p)$ is generated by the symmetries in it. 
	\begin{definition}[Spinor norm map $\theta$]
		The spinor norm map $\theta: O(V_p) \to \Q_p^\times / (\Q_p^\times)^2 $ is defined on symmetries $\tau_{\vec u}$ by $\theta(\tau_{\vec u}) = Q(\vec u) (\Q_p^\times)^2 $. For arbitrary elements of $O(V_p)$, it is defined multiplicatively.
	\end{definition}
	
	The subgroup $\theta(O^+(L_p + \vec v))$ of $\Q_p^\times / (\Q_p^\times)^2$ for a prime $p$ is important in the classification of lattice cosets (see equation (\ref{eqn::num_spn_in_gen})). If $p$ does not divide the conductor of $\vec v$, then $\vec v\in L_p$, and so $\theta(O^+(L_p + \vec v)) = \theta(O^+(L_p))$. This latter quantity is easily computed given the Gram matrix of $L$, using the algorithm given below. To use the algorithm, we would need to decompose $L_p$ as follows.
	
	Suppose lattice $L$ has rank $k$. If $p$ is odd, then $L_p$ may be diagonalized in the form $L_p \cong \diag\{a_1, a_2, ..., a_k\}$. If $p=2$, then $L_p \cong A$ (in some basis) where $A$ is a block diagonal matrix whose diagonal entries are either a $1\times 1$ matrix $(qx)$ (where $2\nmid x$), or are a $2\times 2$ matrix of the form $q\left(\begin{smallmatrix} a&b\\b&c \end{smallmatrix} \right)$ where $b$ is even, and $a,c,ac-b^2$ are all odd. Here, $q$ is some power of $2$. This decomposition of $A$ corresponds to the \emph{Jordan decomposition} of the lattice $L_p$, i.e. writing $L_p$ as an orthogonal sum of unary and binary lattices (see \cite{omeara}; for an algorithm for such a decomposition see \cite[4.4, Chap.15]{conway}). 
	
	With this diagonalization, we may apply the below algorithm adapted from \cite[9.5, Chap.15]{conway} to compute $\theta(O^+(L_p))$. Here, for $p=2$, $A_q$ denotes the block diagonal matrix formed by all of those $1\times 1$ and $2\times 2$ matrices described above with given $q$ (a power of 2). 
	
	\begin{lemma}[Algorithm to compute $\theta(O^+(L_p))$] \label{lem::conway_algorithm}
		Suppose we have $L_p \cong A$, where $A$ is diagonalized as described above. First, we create two sets $S_1$ and $S_2$ as follows:
		\begin{itemize}
			\item[List 1:] If $p\ne 2$, then $S_1 = \{a_1, a_2, ..., a_k\}$ (i.e. all diagonal entries of $A$). 
			If $p=2$, then $S_1$ consists of those $qx$ that appear in the diagonalization of $A$ (i.e. those $qx$ corresponding to a unary lattice in the Jordan decomposition). 
			
			\item[List 2:] If $p\ne 2$, then $S_2=\emptyset$. Otherwise, $S_2$ consists of $2q(\Q_2^\times)^2, 3\cdot 2q(\Q_2^\times)^2, 5\cdot 2q(\Q_2^\times)^2, 7\cdot 2q(\Q_2^\times)^2$ for every $2\times 2$ matrix $q\left(\begin{smallmatrix} a&b\\b&c \end{smallmatrix} \right)$ occurring in the diagonalization of $A$.
		\end{itemize}
		Note that $S_1$ consists of elements of $\Q_p$ while $S_2$ consists of square classes in the quotient group $\Q_p^\times / (\Q_p^\times)^2$. Now, we create a supplement set $T$ given by 
		\begin{itemize}
			\item If $p\ne 2$ and there are two diagonal entries $a_i$ and $a_j$ ($i\ne j$) such that $a_i/a_j \in \Z_p^\times$, or if $p=2$ and the diagonal block matrix $\diag\{A_q, A_{2q}, A_{4q}, A_{8q}\}$ is of size $3\times 3$ or larger, then $T = \Z_p^\times (\Q_p^\times)^2$. 
			
			\item If $p=2$, then we include in $T$ the square classes as given by the following table 
			\begin{table}[H]
				\centering
				\begin{tabular}{|c|c|c|c|c|c|}
					\hline 
					Include & $2(\Q_2^\times)^2$ & $3\cdot 2(\Q_2^\times)^2$ & $5(\Q_2^\times)^2$ & $3(\Q_2^\times)^2$ & $7(\Q_2^\times)^2$\\\hline 
					\small{if $S_1$ contains} & \multirow{3}{*}{$(\Q_2^\times)^2$} & \multirow{3}{*}{$5(\Q_2^\times)^2$} & \multirow{3}{*}{$\{1,4,16\}\Z_2^\times$} & \multirow{3}{*}{$\{2,8,10,40\}(\Z_2^\times)^2$} & \multirow{3}{*}{$\{6,14,24,56\}(\Z_2^\times)^2$} \\
					\small{two elements} & & & & & \\
					\small{whose ratio is in:} & & & & & \\\hline  
				\end{tabular}
			\end{table}
		\end{itemize}
		Finally, let $S_1^2 = \{xy (\Q_p^\times)^2 : x,y\in S_1\}$ and $S_2^2 = \{xy : x,y\in S_2\}$. Then, $\theta(O^+(L_p))$ is the subgroup of $\Q_p^\times / (\Q_p^\times )^2$ generated by $S_1^2 \cup S_2^2 \cup T$. 
	\end{lemma}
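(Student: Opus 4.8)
The plan is to derive the algorithm from the structure theory of quadratic $\Z_p$-lattices and their rotation groups, as developed in \cite{omeara}, together with the spinor-norm computation of \cite[Ch.~15, \S9.5]{conway}, the only new point being to carry everything through for the rotation group $O^+$ rather than the full orthogonal group. The organising device is the Jordan splitting of $L_p$ into $p^i$-modular components, each an orthogonal sum of the unary and binary blocks described in the statement. Two reductions make the computation manageable. First, $\theta$ restricted to $O^+$ is unchanged if the whole bilinear form is rescaled: replacing $B$ by $\lambda B$ leaves $O^+(L_p)$ unchanged as a set of linear maps, and since every rotation is a product of an \emph{even} number of symmetries, the factors $\lambda$ in $\theta(\tau_{\vec u}) = Q(\vec u)(\Q_p^\times)^2$ cancel in pairs; hence a given Jordan block may be normalised to be unimodular when its contribution is analysed, the relevant power of $p$ being tracked separately. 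Second, $\theta(O^+(L_p))$ is generated by the spinor norms of rotations supported on a bounded number of adjacent Jordan blocks — binary orthogonal summands of a single component, binary sublattices built from two unary blocks, and (only when $p=2$) ternary sublattices built from blocks at four consecutive scales. This locality is the $p$-adic counterpart of $O^+(L_p)$ being generated by Eichler transformations together with the rotation groups of the individual modular components; I would cite it from \cite{omeara} or re-derive it from the explicit form of $O^+$ of a modular $\Z_p$-lattice.

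Granting these reductions, the remaining work uses only $\theta(\tau_{\vec u}) = Q(\vec u)(\Q_p^\times)^2$. For a unary block $\langle q\epsilon\rangle$ the symmetry $\tau_{\vec u}$ in a primitive vector of norm $q\epsilon$ negates that vector and fixes its orthogonal complement, so it lies in $O(L_p)$; the product of two such symmetries (from possibly different unary blocks) is a rotation of $L_p$ with spinor norm $q\epsilon\,q'\epsilon'(\Q_p^\times)^2$, and ranging over all pairs yields the generating set $S_1^2$. A binary ``even'' block has $O^+(L_p)$, after the scaling normalisation, equal to the rotation group of the hyperbolic or the anisotropic even binary $\Z_2$-lattice; writing those rotations as products of two symmetries and restoring the power of $2$ produces the generators in $S_2$ (and hence the pairwise products $S_2^2$). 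Finally, the supplement $T$ records the cases in which a low-rank sublattice acquires ``extra room'': for $p$ odd, two unary blocks at a common scale span a unimodular binary $\Z_p$-lattice whose $O^+$ has spinor norm all of $\Z_p^\times(\Q_p^\times)^2$ (whether or not the underlying space is isotropic), which is the first clause of $T$; for $p=2$ the same analysis carried out over the finitely many binary $\Z_2$-lattices $\langle 2^i\epsilon\rangle\perp\langle 2^j\eta\rangle$ with $|i-j|$ small, and over ternary blocks spanning four consecutive scales, yields the ``size $\ge 3$'' clause and the individual rows of the table.

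The hard part is the $p=2$ bookkeeping. One has to justify that only pairs of Jordan blocks (and, for the $\Z_2^\times(\Q_2^\times)^2$ clause, triples at consecutive scales) can contribute — i.e.\ that the algorithm really is local — which rests on the normal form of $O^+(L_2)$ in terms of symmetries and Eichler transformations; and then carry out a finite but delicate case check over the eight square classes of $\Q_2^\times/(\Q_2^\times)^2$, confirming that the explicit rotations one writes down exhaust $\theta(O^+(L_2))$ and reproduce the table entries with the correct dependence on the power of $2$. This is in substance \cite[Ch.~15, \S9.5]{conway}, but because the statement here is for the rotation subgroup each contribution must be re-examined to check that one obtains exactly the stated subgroup. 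Accordingly I would present the odd-$p$ case in full and handle $p=2$ by the reductions above together with a direct appeal to \cite{conway} for the residual case-by-case verification.
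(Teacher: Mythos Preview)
The paper does not prove this lemma at all: it is stated as an algorithm ``adapted from \cite[9.5, Chap.~15]{conway}'' and used as a black box, with no argument given in the body of the paper. So there is no proof in the paper to compare your proposal against; you are attempting something the author deliberately outsourced to the literature.

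Your sketch is a reasonable outline of how one would actually justify the algorithm, and the two reductions you identify (scaling invariance of $\theta$ on $O^+$, and locality of generators to small collections of Jordan blocks) are the right organising principles. That said, your proposal is honest about its own gap: the $p=2$ case is where essentially all the content lies, and you end by appealing to \cite{conway} for ``the residual case-by-case verification'' --- which is exactly what the paper does in the first place. If you intend to write an independent proof, the locality statement (that $\theta(O^+(L_p))$ is generated by spinor norms of rotations supported on at most two or three adjacent Jordan components) needs a precise citation or a self-contained argument; the references you gesture at in \cite{omeara} (e.g.\ the generation of $O^+$ by symmetries and the structure of $O^+$ of a modular lattice) do not immediately give this without additional work, and Eichler transformations are not discussed in the paper's framework. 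If instead you are content to match the paper's level of rigour, a one-line citation to Conway suffices and your longer sketch is unnecessary.
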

	
	This algorithm gives an efficient method to compute $\theta(O^+(L_p))$. An extremely useful corollary is 
	\begin{corollary_lem}\label{cor::spnmap_img_of_modular_lattice}
		If $p$ is an odd prime, and if $L_p \cong \diag\{a_1, a_2, a_3\}$ with $a_i\in \Z_p^\times $ for $i=1,2,3$, then $\theta(O^+(L_p)) = \Z_p^\times (\Q_p^\times)^2$.
	\end{corollary_lem}
	\begin{proof}
		Clearly, $S_1 = \{a_1, a_2, a_3\} \subset \Z_p^\times$ and $S_2=\emptyset$ in the above lemma. Since $a_1/a_0 \in \Z_p^\times$, we have $T = \Z_p^\times (\Q_p^\times)^2$. The result follows from the lemma.
	\end{proof}
	For another proof of Corollary \ref{cor::spnmap_img_of_modular_lattice} that does not use the algorithm, see \cite[92:5]{omeara}.
	
	Lemma \ref{lem::conway_algorithm} allows us to compute $\theta(O^+(L_p + \vec v))$ for all primes $p$ not dividing the conductor of $\vec v$ (for then $L_p+v = L_p$). However, for primes dividing the conductor, no such easy result is known. The only result that can be used to compute $\theta(O^+(L_p + \vec v))$ is the following lemma, which is adapted from Theorem 2 in \cite{teterin}. 
	\begin{lemma}
		The group $\theta(O^+(L_p + \vec v)) $ for odd primes $p$ is generated by the spinor norm applied to the product of any two symmetries in $O(V_p)$ preserving $L_p+\vec v$.
	\end{lemma}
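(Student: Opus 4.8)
Since $V_p$ has rank $3$, its dimension is odd, so by the Cartan--Dieudonn\'e theorem every element of $O^+(V_p)$ is a product of at most three symmetries and, having determinant $1$, is in fact either the identity or a product of exactly two symmetries. Write $O(L_p+\vec v)$ for the group of all isometries $\sigma\in O(V_p)$ with $\sigma L_p=L_p$ and $\sigma\vec v-\vec v\in L_p$, and let $G$ be the subgroup of $\Q_p^\times/(\Q_p^\times)^2$ generated by the classes $\theta(\tau_{\vec u})\theta(\tau_{\vec w})=Q(\vec u)Q(\vec w)(\Q_p^\times)^2$ as $\tau_{\vec u},\tau_{\vec w}$ range over the symmetries lying in $O(L_p+\vec v)$. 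The inclusion $G\subseteq\theta(O^+(L_p+\vec v))$ is immediate: if $\tau_{\vec u},\tau_{\vec w}\in O(L_p+\vec v)$ then $\tau_{\vec u}\tau_{\vec w}$ has determinant $1$ and preserves $L_p+\vec v$, hence lies in $O^+(L_p+\vec v)$, and $\theta(\tau_{\vec u}\tau_{\vec w})=\theta(\tau_{\vec u})\theta(\tau_{\vec w})$. By multiplicativity of the spinor norm it remains to prove that $\theta(\sigma)\in G$ for every $\sigma\in O^+(L_p+\vec v)$.

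So fix $\sigma\in O^+(L_p+\vec v)$ with $\sigma\ne\mathrm{id}$. The plan is to exhibit a decomposition $\sigma=\tau_{\vec u}\tau_{\vec w}$ in which both symmetries already lie in $O(L_p+\vec v)$; then $\theta(\sigma)=Q(\vec u)Q(\vec w)(\Q_p^\times)^2\in G$. A rotation of a $3$-dimensional space fixes a line $\ell$ pointwise (an eigenvector for the eigenvalue $1$, which must occur); in the generic case $\ell$ is anisotropic, so $V_p=\ell\perp W$ with $W=\ell^\perp$ binary and $\sigma|_W$ a rotation of $W$, and then for any anisotropic $\vec u\in W$ the isometry $\tau_{\vec u}\sigma$ fixes $\ell$ and has determinant $-1$, hence restricts to a symmetry of $W$, say $\tau_{\vec w(\vec u)}$ with $\vec w(\vec u)\in W$, giving $\sigma=\tau_{\vec u}\tau_{\vec w(\vec u)}$. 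This yields a one-parameter family of decompositions, and the task is to choose $\vec u$ in it with $\tau_{\vec u},\tau_{\vec w(\vec u)}\in O(L_p+\vec v)$. Recall the two requirements on a symmetry $\tau_{\vec u}$: first $\tau_{\vec u}L_p=L_p$, which for odd $p$ holds whenever $\vec u$ is a unit multiple of a primitive vector of $L_p$ with $Q(\vec u)\in\Z_p^\times$ (since $2$ and $Q(\vec u)$ are then units, $\tau_{\vec u}\vec x-\vec x=-2\tfrac{B(\vec u,\vec x)}{Q(\vec u)}\vec u\in L_p$ for all $\vec x\in L_p$); and second $\tau_{\vec u}\vec v-\vec v=-2\tfrac{B(\vec u,\vec v)}{Q(\vec u)}\vec u\in L_p$, which is a congruence condition on $B(\vec u,\vec v)$ modulo a power of $p$.

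To meet both requirements simultaneously I would argue locally. Decompose $\vec v=\vec v_\ell+\vec v_W$ along $\ell\oplus W$; since $\sigma$ fixes $\ell$ pointwise, the coset condition $\sigma\vec v-\vec v\in L_p$ constrains only the $W$-component, which leaves room to work inside the binary space $W$. Fix a $p$-adic basis of $L_p$ adapted to a Jordan splitting and to $\ell$, reduce the selection problem for $\vec u$ modulo the relevant power of $p$ --- it becomes the problem of finding a vector in the rescaled $2$-dimensional quadratic space $W\otimes\F_p$ with prescribed norm class and prescribed inner product with (the reduction of) $\vec v_W$, together with the constraint this forces on $\vec w(\vec u)$ --- and lift the solution by Hensel's lemma. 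Provided the residue field is large enough and the relevant binary Jordan component is isotropic (or at least represents enough square classes), such a $\vec u$ exists; the finitely many remaining configurations (notably $p=3$, the anisotropic binary plane, non-unimodular Jordan components at the scales of $\ell$ and $W$, and the degenerate cases where $\ell$ is isotropic or $\sigma$ acts as $-1$ on a plane) are disposed of by separate elementary arguments, as in Teterin's original proof. The resulting statement is exactly what one uses in practice: one enumerates coset-preserving symmetries $\tau_{\vec u}$, computes the spinor norms $Q(\vec u)Q(\vec w)(\Q_p^\times)^2$ of their pairwise products, and the lemma guarantees these generate all of $\theta(O^+(L_p+\vec v))$.

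The main obstacle is precisely this last step: guaranteeing that the family $\{(\vec u,\vec w(\vec u))\}$ of two-symmetry decompositions of $\sigma$ contains one consisting of \emph{coset-preserving} symmetries, uniformly over all $L_p$ and all translations $\vec v$. One cannot avoid controlling the lattice condition $\tau_{\vec u}L_p=L_p$ and the translation condition $\tau_{\vec u}\vec v-\vec v\in L_p$ at the same time, and the small-prime and anisotropic cases genuinely require a hands-on analysis. Once that case check is in place, the remainder --- Cartan--Dieudonn\'e in odd dimension and multiplicativity of $\theta$ --- is routine.
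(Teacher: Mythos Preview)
The paper does not give its own proof of this lemma; it simply records it as ``adapted from Theorem 2 in \cite{teterin}'' and moves on. So there is no in-paper argument to compare against, and your proposal should be judged on whether it actually establishes the claim independently of Teterin.

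It does not. Your plan is to show that every non-identity $\sigma\in O^+(L_p+\vec v)$ admits a decomposition $\sigma=\tau_{\vec u}\tau_{\vec w}$ with \emph{both} symmetries coset-preserving. You correctly observe (via Cartan--Dieudonn\'e in rank $3$ and the factorisation $(\lambda-1)(\lambda^2+(1-t)\lambda+1)$ of the characteristic polynomial) that $\sigma$ fixes a line and is a product of two symmetries in $O(V_p)$; but the crucial step---choosing $\vec u$ so that $\tau_{\vec u}$ and $\tau_{\vec w(\vec u)}$ both preserve $L_p$ \emph{and} the coset---is only sketched. You reduce it to a congruence problem, invoke Hensel, and then defer the anisotropic, isotropic-axis, $-1$-on-a-plane, non-unimodular, and $p=3$ cases to ``separate elementary arguments, as in Teterin's original proof''. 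That is the entire content of the lemma; once you cite Teterin for it you have not proved anything beyond what the paper already asserts by citation.

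There is also a structural inefficiency. The lemma only asks that $\theta(O^+(L_p+\vec v))$ be \emph{generated} by the spinor norms of products of two coset-preserving symmetries, not that every rotation be such a product. A cleaner route is: prove that for odd $p$ the group $O(L_p+\vec v)$ is generated by the symmetries it contains (this is the actual content of Teterin's theorem, analogous to the classical result for lattices). Then any $\sigma\in O^+(L_p+\vec v)$ is a product of an even number $2k$ of coset-preserving symmetries, and $\theta(\sigma)=\prod_{j=1}^k\theta(\tau_{\vec u_{2j-1}}\tau_{\vec u_{2j}})$ lies in $G$. This avoids forcing $k=1$ and sidesteps the delicate one-parameter search you set up. Either way, the substantive work---the generation-by-symmetries statement for cosets at odd primes---remains to be done, and neither your proposal nor the paper carries it out.
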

	This reduces the problem of finding $\theta(O^+(L_p + \vec v)) $ to the problem of characterizing symmetries that preserve the lattice coset.
	
	Another method for computing $\theta(O^+(L_p +\vec v))$ is to consider $M_p := \Z_p \vec v + L_p$. It is easy to see that $O^+(L_p + \vec v) \subseteq O^+(M_p)$, and so \(\theta(O^+(L_p + \vec v)) \subseteq \theta(O^+(M_p))\).
	In many cases, the lattice $M_p$ might have small $\theta(O^+(M_p))$, which then makes it easier to evaluate $\theta(O^+(L_p +\vec v))$. 
	
	\subsubsection{Classifications}\label{sctn::classifications}
	
	Let $O_{\A}(V)$ and $O_{\A}'(V)$ denote the \emph{adelization} of all rotations and the kernel of the spinor norm map $\theta$ in $V$ respectively. Then, the (proper) class, spinor genus, and genus of a lattice coset $L+\vec v$ is given by 
	\begin{align}
		\cls^+(L+\vec v) &= \text{orbit of } L+\vec v \text{ under the action of } O^+(V) \label{eqn::cls_defn}\\
		\spn^+(L+\vec v) &= \text{orbit of } L+\vec v \text{ under the action of } O^+(V)O'_{\A}(V) \label{eqn::spn_defn}\\ 
		\gen^+(L+\vec v) &= \text{orbit of } L+\vec v \text{ under the action of } O_{\A}(V)   \label{eqn::gen_defn}
	\end{align}
	respectively; see \cite{chan_oh} for further details. The \emph{class number} (resp. \emph{spinor class number}) of a lattice coset $L+\vec v$ is the number of classes in the genus (resp. spinor genera) of $L+\vec v$.
	
	The number of spinor genera in the genus of a lattice coset $L+\vec v$ is given by the following group index (c.f. \cite{feixu}) 
	\begin{equation}\label{eqn::num_spn_in_gen}
		\left[J_\Q : \Q^\times \prod_p \theta(O^+(L_p + \vec v))\right]
	\end{equation}
	where the product runs over all primes $p$, and $J_\Q$ is the set of all \idele s over $\Q$. This is extremely useful in classifying the classes in a genus into its various spinor genera.
	
	Throughout the rest of the paper, for the quadratic form $Q_{\vec a}$, $L = L_{\vec a}$ will denote the lattice whose gram matrix is $\diag \;\vec a$ in the standard basis $\vec e_1, \vec e_2, \vec e_3$ of $V = \Q^3$. For the congruence conditions $\vec h, N$ (where we may assume that $\vec h \in L$) we will consider the lattice coset $NL+\vec h$ (which is a lattice coset of the lattice $NL$). Since $O^+(NL) = O^+(L)$, in practice we will only need to consider $NL+\vec h$ and $L$ (and their localizations). Finally, we will assume that $\vec h$ is \emph{primitive}, i.e. those $\vec h = (h_1, h_2, h_3)^T$ such that $\gcd(h_i, N)=1$ for some $i=1,2,3$, or equivalently those $\vec h$ that have conductor $N$.
	
	\section{Pseudo Spinor Class 1 Identities} \label{sctn::comp_search}
	Using a computer, lattice cosets were found whose corresponding theta series satisfied identities of the type 
	\begin{equation}\label{eqn::pseudo_spn1_id}
		\Theta_{\vec a, \vec h, N} = \mc E + \mc U
	\end{equation}
	where $\mc E$ is an Eisenstein series and $\mc U$ is a unary theta series. Such an identity was called a \emph{pseudo spinor class 1 identity} since the theta series of lattice cosets with spinor class number 1 (are expected to) satisfy such identities. In this computer search, we have the following restrictions on $\mc E$ and $\mc U$.
	\begin{itemize}
		\item $\mc E$ is a linear combination of $\Theta_{\vec a}\big|S_{M,m}$ for $M,m\in \N$. Essentially, we are adding the extra restriction that all solutions $\vec x$ to $Q_\vec a(\vec x) = n $ with $\vec x\equiv \vec h\modc{N}$ are primitive solutions.
		\item $\mc U$ consists of unary theta series for a fixed value of $t\in \N$, where the unary theta series are of the form $c \cdot \theta_{\chi, t}$ or $c \cdot \theta_{\chi, t}\big| S_{M,m}$ for some constant $c$, some character $\chi$, and some $M,m\in \N$.
	\end{itemize}
	These restrictions were placed only to simplify the search process. 
	The results of the computer search are given in the following theorem.
	\begin{theorem}\label{thm::spn_identities_concrete_form}
		For all 85 values of $\vec a, \vec h$, and $N$ given in Table \ref{table::list_spn1_identities} in Appendix \ref{sctn::list_spn1_identities}, the theta series $\Theta_{\vec a, \vec h, N}$ satisfies a pseudo-spinor class 1 identity. More concretely, we have the following identity \[\Theta_{\vec a, \vec h, N} = \mc E + \mc U\] 
		where $\mc E$ and $\mc U$ are given in Table \ref{table::list_spn1_identities}.
	\end{theorem}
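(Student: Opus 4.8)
The proof is an application of the valence formula, Lemma~\ref{lem::valence_formula}: for each of the $85$ entries I would show that the difference of the two sides of the asserted identity is a holomorphic modular form of weight $3/2$ with enough vanishing Fourier coefficients to force it to be identically zero. Note that for this argument it is irrelevant whether $\mc E$ is literally an Eisenstein series in the Siegel--Weil sense; only the explicit shapes of $\mc E$ and $\mc U$ recorded in Table~\ref{table::list_spn1_identities} are used.

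First I would record that every ingredient is a holomorphic modular form of weight $3/2$. By Lemma~\ref{lem::ThetaSeries}, $\Theta_{\vec a, \vec h, N} \in \M_{3/2}(\Gamma_{4N^2l, N}, \chi_{4a_1a_2a_3})$ with $l = \lcm(a_1,a_2,a_3)$, and $\Theta_{\vec a} \in \M_{3/2}(\Gamma_0(4l), \chi_{4a_1a_2a_3})$; hence, by Lemma~\ref{lem::sieve_modularity}, each summand $\Theta_{\vec a}\big|S_{M,m}$ appearing in $\mc E$ lies in some $\M_{3/2}(\Gamma_{N_1, L_1}, \chi_{4a_1a_2a_3})$, and each summand of the form $c\,\theta_{\chi, t}$ or $c\,\theta_{\chi,t}\big|S_{M',m'}$ appearing in $\mc U$ lies in some $\M_{3/2}(\Gamma_{N_2, L_2}, \chi\chi_{-4t})$ (using the modularity of unary theta series stated in their definition). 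All of these forms are holomorphic, including $\Theta_{\vec a, \vec h, N}$ since $Q_{\vec a}$ is positive definite.

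Next, for each entry I would set $\Gamma := \Gamma_{N_0, L_0}$, where $N_0$ is the least common multiple of all the levels above (so $4\mid N_0$, hence $\Gamma \le \Gamma_0(4)$) and $L_0$ the least common multiple of the corresponding lower parameters, so that $\Gamma$ is contained in every congruence subgroup above. One then checks directly from the data in the table that $\chi\chi_{-4t}$ and $\chi_{4a_1a_2a_3}$ agree as characters modulo $L_0$, so that every ingredient in fact lies in the single space $\M_{3/2}(\Gamma, \chi_{4a_1a_2a_3})$ and hence $f := \Theta_{\vec a, \vec h, N} - \mc E - \mc U$ does too. Computing $[SL_2(\Z):\Gamma]$ from Lemma~\ref{lem::grpindex_congsubgrp}, Lemma~\ref{lem::valence_formula} shows that $f \equiv 0$ as soon as the Fourier coefficients of $f$ vanish for all $n \le \tfrac{3}{12}[SL_2(\Z):\Gamma] = \tfrac14[SL_2(\Z):\Gamma]$. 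It therefore suffices to verify, for each entry and all $n$ up to this explicit finite bound, that $r_{\vec a, \vec h, N}(n)$ --- obtained by directly enumerating the finitely many $\vec x \in \Z^3$ with $Q_{\vec a}(\vec x) = n$ and $\vec x \equiv \vec h \modc{N}$ --- equals the $n$-th coefficient of $\mc E + \mc U$, which is assembled from the (likewise directly computable) values $r_{\vec a}(n)$ via the sieves and scalars listed in the table, together with the unary contributions $\chi(k)k$ occurring when $n = tk^2$. This last step is a finite computation carried out by computer for each of the $85$ cases.

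The main obstacle is purely computational: since $N_0$ involves $4N^2l$ together with the squares of the various sieving moduli $M$, the index $[SL_2(\Z):\Gamma]$ --- and hence the number of Fourier coefficients that must be compared --- can be quite large, so the enumeration of lattice points has to be organized with some care. Conceptually, the only point requiring attention is the bookkeeping that places all ingredients in a common space $\M_{3/2}(\Gamma, \chi_{4a_1a_2a_3})$, in particular confirming that $\chi\chi_{-4t}$ does not differ from $\chi_{4a_1a_2a_3}$ by a quadratic twist at the level $L_0$; once that is settled, the valence formula does the rest.
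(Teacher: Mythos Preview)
Your proposal is correct and follows essentially the same approach as the paper: form the difference $f = \Theta_{\vec a,\vec h,N} - \mc E - \mc U$, place all ingredients in a common space $\M_{3/2}(\Gamma,\chi)$ via Lemmas~\ref{lem::ThetaSeries} and~\ref{lem::sieve_modularity}, then invoke the valence bound and verify the finitely many coefficients by computer. The only visible difference is the constant in front of the index: you read Lemma~\ref{lem::valence_formula} literally and get $\tfrac{3}{12}[SL_2(\Z):\Gamma]=\tfrac14[SL_2(\Z):\Gamma]$, whereas the paper uses $\tfrac{1}{12}\cdot\tfrac32[SL_2(\Z):\Gamma]=\tfrac18[SL_2(\Z):\Gamma]$ (i.e.\ weight$/12$ rather than $k/12$), and the table's ``Valence Bound'' column is computed with the latter; your bound is larger and hence safe, so this does not affect correctness, only the amount of computation.
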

	\begin{proof}
		For each given $\vec a$, $\vec h$, and $N$, we consider $f_{\vec a, \vec h, N} := \Theta_{\vec a, \vec h, N} - \mc E - \mc U$. Using the results listed in Sections \ref{sctn::theta_series} and \ref{sctn::sieve_modular_forms}, and noting that if $\Gamma'\le \Gamma$ then $\M_{k/2}(\Gamma, \chi) \subseteq \M_{k/2}(\Gamma', \chi)$, it can be seen that $f_{\vec a, \vec h, N}$ is a modular form of weight $3/2$ and congruence subgroup $\Gamma$, where $\Gamma$ is given in Table \ref{table::list_spn1_identities}. Lemma \ref{lem::grpindex_congsubgrp} is then used to compute $[SL_2(\Z): \Gamma]$. In view of Lemma \ref{lem::valence_formula}, it suffices to check that all coefficients $c_n$ of $f_{\vec a, \vec h, N}$ are zero for all positive integers $n$ less than \[\frac1{12} \cdot \frac32 [SL_2(\Z): \Gamma] = \frac18 [SL_2(\Z): \Gamma].\]
		The value $\frac18 [SL_2(\Z): \Gamma]$ is given in the last column of Table \ref{table::list_spn1_identities}. Using a computer, it can then be checked that $c_n = 0$ for all $n\le \frac18 [SL_2(\Z):\Gamma]$, which then establishes the required identity. Such a computer verification up to the valence bound was carried out for all 85 identities. 
	\end{proof}
	
	\section{Classification of Lattice Cosets} \label{sctn::lattice_cosets}	
	The general strategy to classify the spinor genus of the lattice coset $NL_{\vec a} + \vec h$ is to first find the genus of $NL_{\vec a} + \vec h$, and then to calculate the number of spinor genera in the genus (using equation (\ref{eqn::num_spn_in_gen})). Once we know the genus and the number of spinor genera, it is then a matter of finding which lattice cosets in the genus belong to the same orbit under the group action $O^+(V)O'_{\A}(V)$. If we can find elements of $O^+(V)O'_{\A}(V)$ that map certain lattice cosets to other lattice cosets, then by knowing the number of orbits expected (i.e. the number of spinor genera), we can establish the decomposition into spinor genera. This general strategy is used in \cite{liang} as well as in \cite{haensch_kane}.
	
	The classification of the genus is fairly straightforward, and makes use of the following results.
	
	\begin{lemma}\label{lem::characterize_O(L)}
		Suppose $L \cong \diag\{a_1, a_2, a_3\}$ in the basis $\vec v_1, \vec v_2, \vec v_3$ for $V$. The group $O^+(L)$ is then given by \[O^+(L) = \{\sigma\in O^+(V): \sigma(\vec v_i) = \pm \vec v_{j_i} \text{, where } a_i = a_{j_i} \text{, and } j_1,j_2,j_3 \text{ is a permutation of } 1,2,3\}.\]
	\end{lemma}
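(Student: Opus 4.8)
The plan is to reduce the statement to the cleaner claim that the full isometry group $O(L) := \{\sigma\in O(V) : \sigma L = L\}$ equals the set $P$ of ``signed permutations respecting equal $a_i$'', i.e. those $\sigma\in O(V)$ with $\sigma\vec v_i = \pm\vec v_{j_i}$ where $a_i = a_{j_i}$ and $(j_1,j_2,j_3)$ is a permutation of $(1,2,3)$. The asserted identity then follows by intersecting with $O^+(V)$, since $O^+(L) = O(L)\cap O^+(V)$ by definition and the right-hand side of the lemma is exactly $P\cap O^+(V)$. The inclusion $P\subseteq O(L)$ is immediate: for $\sigma\in P$ one checks $B(\sigma\vec v_i,\sigma\vec v_j) = \pm B(\vec v_{j_i},\vec v_{j_j}) = B(\vec v_i,\vec v_j)$ for all $i,j$ (the sign is $+$ when $i=j$, using $a_{j_i}=a_i$, and both sides vanish when $i\ne j$ since then $j_i\ne j_j$), so $\sigma\in O(V)$; and $\sigma$ carries the $\Z$-basis $\{\vec v_1,\vec v_2,\vec v_3\}$ of $L$ onto the $\Z$-basis $\{\pm\vec v_{j_1},\pm\vec v_{j_2},\pm\vec v_{j_3}\}$, so $\sigma L = L$.

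For the reverse inclusion $O(L)\subseteq P$, I would induct on the rank, using that an isometry permutes vectors of each fixed norm and starting from the shortest. Put $m := \min\{a_1,a_2,a_3\}$. For any nonzero $\vec x = \sum_i x_i\vec v_i\in L$ one has $Q(\vec x) = \sum_i a_i x_i^2 \ge m\sum_i x_i^2 \ge m$, with equality forcing $\sum_i x_i^2 = 1$ and the unique nonzero coordinate to sit in an index $i$ with $a_i = m$; hence the set of shortest nonzero vectors of $L$ is precisely $\{\pm\vec v_i : a_i = m\}$. Now let $\sigma\in O(L)$. Since $\sigma$ restricts to a bijection of this set, for every $i$ in the minimal class $I_1 := \{i : a_i = m\}$ we get $\sigma\vec v_i = \pm\vec v_{j_i}$ with $a_{j_i}=a_i$, and $i\mapsto j_i$ is a permutation of $I_1$. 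In particular $\sigma$ preserves $L_1 := \bigoplus_{i\in I_1}\Z\vec v_i$, hence also its orthogonal complement in $L$, which — because the given basis is orthogonal — is exactly $L_2 := \bigoplus_{i\notin I_1}\Z\vec v_i\cong\diag\{a_i : i\notin I_1\}$. Then $\sigma|_{L_2}\in O(L_2)$, so by the inductive hypothesis (a lattice of strictly smaller rank) it is a signed permutation of $\{\vec v_i : i\notin I_1\}$ respecting equal $a_i$; combined with the description on $L_1$ this gives $\sigma\in P$. The base case (rank $1$) is trivial since $\sigma\vec v_1$ must generate $\Z\vec v_1$.

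I do not expect a genuine obstacle here; the one place to take care is the implication ``$\sigma$ preserves $L_1$ $\Rightarrow$ $\sigma$ preserves $L_2$'', which uses both that $\sigma$ preserves the bilinear form and that the chosen basis is orthogonal (so that $L_2 = L_1^\perp\cap L$), together with the identification of the shortest vectors, which quietly uses positive-definiteness ($a_i>0$, since $\vec a\in\N^3$). One could instead avoid the induction by splitting into cases according to how many of $a_1,a_2,a_3$ coincide and arguing from the matrix identity $X^TAX = A$, where $A = \diag\vec a$ and $X\in SL_3(\Z)$ is the matrix of $\sigma$: the column relations $\sum_i a_ix_{ik}^2 = a_k$ show a nonzero entry $x_{\ell k}$ forces $a_\ell\le a_k$, so after ordering the indices by $a$-value $X$ is block upper triangular, and $\det X\ne 0$ then forces its diagonal blocks to be integral orthogonal matrices; but the ``peel off the minimal sublattice'' induction is the tidier route.
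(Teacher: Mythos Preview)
Your proof is correct. Both inclusions are handled cleanly, and the ``peel off the shortest vectors and induct'' argument is valid; the identification $L_1^\perp\cap L = L_2$ and the claim that $\sigma$ preserves it are justified exactly as you say, and positive-definiteness (from $\vec a\in\N^3$) is indeed what makes the shortest-vector description work.

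The paper takes a different, shorter route: given $\sigma\in O^+(L)$, it observes that $L = \langle\sigma\vec v_1\rangle\perp\langle\sigma\vec v_2\rangle\perp\langle\sigma\vec v_3\rangle = \langle\vec v_1\rangle\perp\langle\vec v_2\rangle\perp\langle\vec v_3\rangle$ are two orthogonal splittings of $L$ into rank-one (hence indecomposable) sublattices, and then invokes the uniqueness of such splittings for definite lattices, citing \cite[105:1]{omeara}. That one citation replaces your entire induction. What you gain by your approach is self-containment: you do not need to know O'Meara's structural theorem, and your argument makes transparent exactly where definiteness enters. What the paper gains is brevity and a direct appeal to a standard result. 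In fact the classical proof of \cite[105:1]{omeara} (due to Eichler/Kneser) proceeds by essentially the same shortest-vector idea you use, so your argument can be read as an inlined special case of that theorem. Your sketched alternative via the column relations of $X^TAX=A$ would also work and is closer in spirit to a direct matrix computation, but as you note the inductive version is tidier.
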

	\begin{proof}
		It is easy to see that $O^+(L)$ contains all isometries given in the above set. If $\sigma\in O^+(L)$, then $L\cong \left< \sigma \vec v_1 \right> \perp \left< \sigma \vec v_2 \right> \perp \left< \sigma \vec v_3\right> \cong \left<\vec v_1 \right> \perp \left<\vec v_2 \right> \perp \left<\vec v_3\right>$, where $Q(\sigma \vec v_i) = Q(\vec v_i)$. However, such decompositions are unique by \cite[105:1]{omeara}. The result follows.
	\end{proof}
	Thus the action of $O^+(L)$ on $\vec v = (v_1, v_2, v_3)$ is essentially to permute $v_1, v_2, v_3$ among themselves, along with some sign changes. Now, if $L_{\vec a}$ has class number 1, then the proper genus of the lattice coset $NL_{\vec a}+\vec h$ consists of classes of the form $\cls^+(NL_{\vec a} + \vec h')$. Since $\cls^+(NL_{\vec a} + \vec h')$ is the orbit of $NL_{\vec a} + \vec h'$ under $O^+(L_{\vec a})$, and since $O^+(L_{\vec a})$ is a finite group by the lemma, the class is easily enumerated once we have a class representative.
	
	\begin{lemma}\label{lem::Mass_Formula}
		Suppose $L$ is a definite lattice with class number 1, and consider the lattice coset $NL+\vec h$, where $\vec h\in L$. Suppose $\mc G$ is a set of class representatives of all classes in the genus of $NL+\vec h$. Then, 
		\begin{equation}\label{eqn::mass_formula}
			\sum_{NL+\vec h' \in \mc G} \frac{1}{|O^+(NL+ \vec h')|} = \frac{1}{|O^+(L)|} \prod_{p|N} [O^+(L_p) : O^+(NL_p + \vec h)].
		\end{equation}
	\end{lemma}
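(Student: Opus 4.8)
The plan is to exploit the mass formula in its classical form for lattices and to keep track of how it transforms when we pass from $L$ to the lattice coset $NL+\vec h$. The starting point is the orbit–stabilizer principle applied to the action of the adelized rotation group. Fix a set $\mc G$ of class representatives in $\gen^+(NL+\vec h)$. Since $L$ has class number $1$, the genus of $NL+\vec h$ is (by the discussion following Lemma~\ref{lem::characterize_O(L)}) a disjoint union of orbits $\cls^+(NL+\vec h')$ under the finite group $O^+(L)=O^+(NL)$, and the whole collection of such cosets $NL+\vec h'$ that lie in the genus is a single orbit of $O^+(L)$ acting on the finite set of cosets $\vec h' + NL$ (those $\vec h'$ that are locally equivalent to $\vec h$ everywhere). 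The orbit-counting/orbit-stabilizer identity then gives
\begin{equation*}
	\sum_{NL+\vec h'\in \mc G} \frac{1}{|O^+(NL+\vec h')|}
	= \frac{\#\{\text{cosets in the orbit}\}}{|O^+(L)|},
\end{equation*}
since the stabilizer of $NL+\vec h'$ in $O^+(L)$ is exactly $O^+(NL+\vec h')$. So the whole statement reduces to showing that the number of cosets $\vec h'+NL$ in the $O^+(L)$-orbit of $\vec h+NL$ — equivalently, the number of cosets in the genus of $NL+\vec h$ as a subset of $(L+\Z\vec h)/NL$ — equals $\prod_{p\mid N}[O^+(L_p):O^+(NL_p+\vec h)]$.

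Next I would identify that index count with a purely local computation. Because $\vec h\in L$, for $p\nmid N$ we have $NL_p=L_p$ and $O^+(NL_p+\vec h)=O^+(L_p)$, so those primes contribute trivially; only $p\mid N$ matter, which is why the product is finite. For $p\mid N$, the local cosets $\vec h'+NL_p$ lying in $\gen^+(NL+\vec h)$ are precisely the $O^+(L_p)$-orbit of $\vec h+NL_p$ inside $(L_p+\Z_p\vec h)/NL_p$ — this is essentially the definition of the genus via the adele group together with the fact that two cosets are in the same genus iff they are locally isometric at every prime. By orbit–stabilizer again this local orbit has size $[O^+(L_p):\mathrm{Stab}]=[O^+(L_p):O^+(NL_p+\vec h)]$. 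The global count of cosets in the genus is then the product of these local orbit sizes, by a strong-approximation / CRT-type argument: a global coset $\vec h'+NL$ is in the genus iff each localization is, the localizations at distinct primes dividing $N$ can be chosen independently, and the surjection $(L+\Z\vec h)/NL \to \prod_{p\mid N}(L_p+\Z_p\vec h)/NL_p$ is a bijection (this is just CRT for the finite abelian group $(L+\Z\vec h)/NL$). Combining this with the orbit–stabilizer identity of the previous paragraph yields equation~(\ref{eqn::mass_formula}).

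The main obstacle is the justification that the set of global cosets in the genus really is the full preimage, under the CRT isomorphism, of the product of the local orbits — in other words, that local isometry at every prime, together with the rational isometry being the ambient $O^+(V)$ (here there is only one genus of $L$, hence no obstruction from the class group of $L$), assembles to a genuine adelic isometry carrying $NL+\vec h$ to $NL+\vec h'$. This is where one needs to invoke the definition of $\gen^+$ in~(\ref{eqn::gen_defn}) via $O_{\A}(V)$ carefully: an adelic rotation $(\sigma_p)$ with $\sigma_p(NL_p+\vec h)=NL_p+\vec h'$ for all $p$ (and $\sigma_p\in O^+(L_p)$ automatically for $p\nmid N$) exists precisely when the local orbit conditions hold, because away from $N$ there is nothing to check and the class number $1$ hypothesis on $L$ means $NL$ (hence the underlying lattice of every coset in the genus) is already a single class. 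I would spell this out, then note that everything else is bookkeeping with finite groups, and that the definiteness hypothesis is only used to guarantee $O^+(L)$ and each $O^+(NL+\vec h')$ are finite so the sums make sense.
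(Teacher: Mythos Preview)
Your approach is correct and genuinely different from the paper's. The paper invokes the mass formula directly in its Tamagawa-measure form: it writes both $\sum_{\mc G} |O^+(NL+\vec h')|^{-1}$ and $|O^+(L)|^{-1}$ as products $2\mu_{\infty}(O^+(V_{\infty}))^{-1}\prod_p \mu_p(\cdot)^{-1}$ (the first with $\mu_p(O^+(NL_p+\vec h))$, the second with $\mu_p(O^+(L_p))$, citing \cite[Corollary~2.5]{liang}), divides one equation by the other, and observes that the ratio $\mu_p(O^+(L_p))/\mu_p(O^+(NL_p+\vec h))$ is $1$ for $p\nmid N$ and equals the index $[O^+(L_p):O^+(NL_p+\vec h)]$ for $p\mid N$. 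Your route is more elementary: you replace the measure-theoretic input by a direct orbit count---the class equation for $O^+(L)$ acting on the finite set of cosets in the genus, followed by CRT on $L/NL\cong\prod_{p\mid N}L_p/NL_p$ and local orbit--stabilizer. This avoids Tamagawa measures entirely and makes the role of the class-number-$1$ hypothesis transparent (it is exactly what lets every class in the genus be represented as $NL+\vec h'$ with the \emph{same} underlying lattice $NL$). The paper's argument is shorter once the cited mass formula is granted; yours is self-contained.

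One wording slip to fix when you write it up: you say the collection of cosets in the genus is ``a single orbit of $O^+(L)$'', but it is a disjoint union of $O^+(L)$-orbits, one per class in $\mc G$. The displayed identity you derive is nonetheless exactly right---it is the class equation $|S|=\sum_{\text{orbits}}|O^+(L)|/|\mathrm{stab}|$ rearranged---so this is a phrasing issue, not a gap.
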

	\begin{proof}
		From the Mass Formula for lattices as well as for lattice cosets we have the following two equations (c.f. \cite[Corollary 2.5]{liang}).
		\begin{align*}
			\sum_{NL+\vec h' \in \mc G} \frac{1}{|O^+(NL+ \vec h')|} &= 2\mu_{\infty_\Q}(O^+(V_{\infty_\Q}))^{-1} \prod_p \mu_p(O^+(NL_p+\vec h))^{-1}\\
			\frac{1}{|O^+(L)|} &= 2\mu_{\infty_\Q}(O^+(V_{\infty_\Q}))^{-1} \prod_p \mu_p(O^+(L_p))^{-1}
		\end{align*}
		where the product runs over all primes $p$, and $\mu$ is the Tamagawa measure on $O^+_{\A}(V)$. Notice that there is only one term on the left of the second equation as $L$ has class number 1. Dividing gives 
		\[\sum_{NL+\vec h' \in \mc G} \frac{1}{|O^+(NL+ \vec h')|} = \frac{1}{|O^+(L)|} \prod_{p} \frac{\mu_p(O^+ (L_p))}{\mu_p(O^+(N L_p + \vec h))}.\]
		For $p\nmid N$, we have $NL_p + \vec h = L_p$, while for $p|N$ we have \[\mu_p(O^+(L_p)) = \mu_p(O^+(NL_p+\vec h)) [O^+(L_p) : O^+(NL_p + \vec h)].\]
		Substituting back gives the required result. 
	\end{proof}
	Thus, if we are given a list of distinct class representatives in a genus, we can use the above lemma to check whether these exhaust all classes in the genus. 
	Let us now try and compute $[O^+(L_p) : O^+(NL_p + \vec h)]$.
	
	Suppose $L\cong \diag\{a_1, a_2, a_3\} =: A$ in the standard basis $\vec e_1, \vec e_2, \vec e_3$ for $\Q^3$, and consider the lattice coset $NL+\vec h$ where $\vec h$ has conductor $N$. For the statement (and proof) of Proposition \ref{prop::matrix_reduction_of_isometries}, $L$ need not have class number 1. Suppose $p$ is a prime dividing $N$. Let  $k := \ord_p N$, $\ell := \ord_p(\lcm\{a_1,a_2,a_3\})$, and let $\delta$ be given by \[\delta = \begin{cases}
		0 & \text{if } p \ne 2,\\
		1 & \text{if } p=2 \text{ and } k\ge 2,\\
		2 & \text{if } p=2 \text{ and } k = 1.
	\end{cases}\]
	Let $q = p^k$ and $q' = p^{k+\ell+\delta}$. Consider the following two groups:
	\begin{equation}\label{eqn::defn_G}
		G = G_{L,q',q} := \{X + qM_3(\Z/q'\Z) : X \in SL_3(\Z/q'\Z), X^TAX \equiv A \modc{q'}\},	
	\end{equation}
	\begin{equation}\label{eqn::defn_H}
		H = H_{L,\vec h, q',q} = \{X\in G : X\vec h \equiv \vec h \modc{q}\}.
	\end{equation}
	Essentially, $G$ is the reduction modulo $q$ of the group of matrices in $M_3(\Z/q'\Z)$ that preserve $A$ and have determinant 1.
	\begin{proposition}\label{prop::matrix_reduction_of_isometries}
		Under the above assumptions and notation, there is a natural surjective homomorphism $\Psi$ of groups from $O^+(L_p)$ onto $G$, where $\Psi$ maps $\sigma \in O^+(L_p)$ to the coordinate matrix of $\sigma$ in the basis $\{\vec e_i\}$, reduced modulo $q$. Moreover, $\Psi$ maps $O^+(NL_p+\vec h)$ onto $H$.
	\end{proposition}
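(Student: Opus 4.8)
\noindent The plan is to construct $\Psi$ by hand, check quickly that it is a homomorphism into $G$, prove it is onto $G$ by a Hensel/Newton lifting, and then read off the statement about $H$.

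First I would dispose of the easy half. If $\sigma\in O^+(L_p)$, its coordinate matrix $X_\sigma$ in the basis $\{\vec e_i\}$ lies in $SL_3(\Z_p)$ and satisfies $X_\sigma^TAX_\sigma = A$ exactly, since $\sigma$ is a genuine isometry. Reducing $X_\sigma$ modulo $q'$ gives an element of $SL_3(\Z/q'\Z)$ preserving $A$ modulo $q'$, so the coset $X_\sigma+qM_3(\Z/q'\Z)$ is a legitimate element of $G$; set $\Psi(\sigma)$ equal to it. Because reduction modulo $q'$ and passage to cosets modulo $qM_3(\Z/q'\Z)$ are multiplicative and $X_{\sigma\tau}=X_\sigma X_\tau$, the map $\Psi$ is a group homomorphism; along the way one verifies routinely that $G$ and $H$ are groups, using $(XY)^TA(XY) = Y^T(X^TAX)Y$ for stability of the defining relations under products and inverses, and $q^2M_3(\Z/q'\Z)\subseteq qM_3(\Z/q'\Z)$ (as $q\mid q'$) for well-definedness of the coset product.

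Next comes the core of the argument, surjectivity of $\Psi\colon O^+(L_p)\to G$. Fix $X\in SL_3(\Z/q'\Z)$ with $X^TAX\equiv A\modc{q'}$ and lift it to any $\tilde X\in M_3(\Z_p)$; since $\det\tilde X\equiv 1\modc{q'}$ and $q'\ge 3$ (indeed $q'\ge 4$ when $p=2$, since then $\delta\ge 1$), $\tilde X\in GL_3(\Z_p)$. Put $B_0:=\tilde X^TA\tilde X = A+q'S_0$ with $S_0\in M_3(\Z_p)$ symmetric. I look for $\sigma=\tilde X E$ with $E\in GL_3(\Z_p)$, $E\equiv I\modc q$, and $E^TB_0E=A$; then automatically $\sigma\equiv X\modc q$ and $\sigma^TA\sigma=A$. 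Build $E$ as a convergent product of elementary factors: if the matrix $E'$ obtained so far satisfies $(E')^TB_0E' = A+p^mT$ with $T\in M_3(\Z_p)$ symmetric and $m\ge k+\ell+\delta$ (which holds initially with $E'=I$, $m=k+\ell+\delta$, $T=S_0$), replace $E'$ by $E'(I+Z)$ where $Z:=-\tfrac12 p^mA^{-1}T$. Then $AZ+(AZ)^T = -p^mT$ because $A$ and $T$ are symmetric, so expanding $(I+Z)^T(A+p^mT)(I+Z)$ the linear term cancels $p^mT$ and leaves the new error $p^m(TZ+Z^TT)+Z^TAZ+p^mZ^TTZ$. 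The crucial estimates are: $Z$ has $p$-valuation at least $m-\ell-v_p(2)$ — here $v_p(2)$ denotes $1$ if $p=2$ and $0$ otherwise, and $A^{-1}=\diag\{a_i^{-1}\}$ accounts for the $-\ell$ since $\ord_p(a_i)\le\ell$ — hence at least $k+\delta-v_p(2)\ge k$, so every correction is $\equiv 0\modc q$ and $E\equiv I\modc q$; and the new error has $p$-valuation at least $2m-\ell-2v_p(2)$, which strictly exceeds $m$ at every step precisely when $k+\delta\ge 2v_p(2)+1$ — exactly the inequality encoded by the three-case definition of $\delta$ (and the reason $q=2$ forces $\delta=2$ rather than $1$). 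Hence the error valuations tend to $\infty$, the product converges in $GL_3(\Z_p)$ to the required $E$, and $\sigma:=\tilde X E$ lies in $GL_3(\Z_p)$, preserves $\Z_p^3=L_p$, and satisfies $\sigma^TA\sigma=A$, so $\sigma\in O(L_p)$. Finally $\det\sigma=\pm1$; either $\det\sigma=1$ — forced when $q>2$, since $\det\sigma\equiv\det X\equiv1\modc q$ — or $q=2$ and we replace $\sigma$ by $-\sigma$, still $\equiv X\modc 2$ and now a rotation. In every case $\Psi(\sigma)=X$, so $\Psi$ is onto $G$.

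It remains to identify the image of $O^+(NL_p+\vec h)$. Since the $p$-part of $NL_p$ is $q\Z_p^3$, one has $O^+(NL_p+\vec h) = \{\sigma\in O^+(L_p): X_\sigma\vec h\equiv\vec h\modc q\}$. If $\sigma$ lies in this set then any representative of $\Psi(\sigma)$ is $\equiv X_\sigma\modc q$ and so fixes $\vec h$ modulo $q$, i.e.\ $\Psi(\sigma)\in H$. Conversely, given $X\in H\subseteq G$, surjectivity yields $\sigma\in O^+(L_p)$ with $\Psi(\sigma)=X$, and then $X_\sigma\vec h\equiv X\vec h\equiv\vec h\modc q$, so $\sigma\in O^+(NL_p+\vec h)$. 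Therefore $\Psi$ maps $O^+(NL_p+\vec h)$ onto $H$, which finishes the proof. The main obstacle is clearly the surjectivity step: the valuation bookkeeping in the lift — $p$-integrality of the corrections together with a net gain of one power of $p$ per step — is what pins down the precise value of $\delta$, and the $2$-adic situation (above all $q=2$) is genuinely delicate. One could instead deduce surjectivity from the smoothness/strong approximation theory of the orthogonal group over $\Z_p$, but the explicit lift is self-contained.
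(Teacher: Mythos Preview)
Your proof is correct and takes a genuinely different route from the paper. The paper establishes surjectivity by an explicit three-step finite construction in rank~3: starting from a lift $X$ of an element of $G$, it first adjusts the second column so that the $(1,2)$-entry of $X^TAX$ vanishes exactly, then uses the adjugate relation $AX_1\equiv (X_1^T)^{-1}A$ to rewrite the third column and force $X_2^TAX_2$ to be exactly diagonal $\diag\{\mu_1,\mu_2,\mu_3\}$, and finally invokes the local square theorem to rescale each column so that $Y^TAY=A$ on the nose. Your Newton/Hensel iteration with the correction $Z=-\tfrac12 p^mA^{-1}T$ replaces this by an infinite but $p$-adically convergent process; your valuation bookkeeping ($v_p(Z)\ge m-\ell-v_p(2)\ge k$ and new error $\ge 2m-\ell-2v_p(2)>m$ exactly when $k+\delta\ge 2v_p(2)+1$) is the correct explanation for the three-case definition of $\delta$ and matches the paper's constraints. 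The trade-off: the paper's argument is entirely explicit and terminates in finitely many steps but leans on the rank-3 cofactor formulas, whereas your approach is rank-independent and conceptually cleaner (it is essentially the smoothness of the orthogonal group scheme made concrete), at the cost of an infinite product. Both handle the $\det=\pm1$ ambiguity at $q=2$ identically, by negating if necessary.
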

	\begin{proof}
		First consider the homomorphism $\Psi_1$ mapping $\sigma \in O^+(L_p)$ to its coordinate matrix in the standard basis; this image is in $SL_3(\Z_p)$ since $\sigma $ preserves $L_p = \Z_p\vec e_1 + \Z_p \vec e_2 + \Z_p\vec e_3$. The image in $SL_3(\Z_p)$ of this map is \[G' = \{X\in SL_3(\Z_p): X^TAX = A, \det X = 1\}\]
		since $\sigma$ is a rotation. Next, consider the homomorphism $\Psi_2$ which takes a matrix in $G'$ and reduces modulo $q$ to a matrix in $G$. Note that, as $\Z_p/q\Z_p \cong \Z/q\Z$, reduction modulo $q$ does indeed give an element of $\Z/q\Z$. As $\Psi = \Psi_2 \circ \Psi_1$, it follows that $\Psi$ is a well-defined homomorphism from $O^+(L)$ into $G$. The surjectivity of $\Psi$ follows if $\Psi_2$ is surjective; this we now show.
		
		Without loss of generality, we may suppose that $a_1\in \Z_p^\times$, and $1 = |a_1|_p \ge |a_2|_p \ge |a_3|_p$. Then, $\ell = \ord_pa_3$. Consider any $X\in G$, and we may as well suppose that $X\in M_3(\Z_p)$, that $X^TAX\equiv A\modc{q'}$, and that $\det X \equiv 1\modc{q'}$. We will be done if we can find $Y\in G'$ such that $Y\equiv X\modc{q}$. 
		
		Let $X = (x_{ij})$. Comparing the $(1,2)$-entry in $X^TAX\equiv A\modc{M}$ gives $\sum_{i=1}^3a_ix_{i1}x_{i2} \equiv 0\modc{q'}$, and so $\eta := -\frac1{q'} \sum_{i=1}^3 a_ix_{i1} x_{i2} \in \Z_p$. As $\sum_{i=1}^3 a_ix_{i1}^2 \equiv a_1 \modc{q'}$ where $p|q'$ and $a_1\in \Z_p^\times$, it follows that the ideal $\left<a_ix_{i1}:i=1,2,3\right>$ in $\Z_p$ contains a unit, and so is equal to $\Z_p$. Thus, there exists $\xi_i\in \Z_p$ such that $$\sum_{i=1}^3 \xi_i\cdot a_ix_{i1} = \eta = -\frac1{q'} \sum_{i=1}^3 a_ix_{i1} x_{i2}$$
		and so $\sum_{i=1}^3 a_ix_{i1} (x_{i2} + q'\xi_i) = 0 $. Let $X_1$ be the matrix obtained from $X$ by adding $q'\xi_i$ to the $(i,2)$ entry. Then, $X_1\equiv X\modc{q'}$, which implies $X_1^TAX_1 \equiv A \modc{q'}$. Moreover, by construction, the $(1,2)$-entry (and hence the $(2,1)$-entry) of $X_1^TAX_1$ is zero. Let $X_1 = (x'_{ij})$ (so $x'_{ij} = x_{ij}$ for $j\ne 2$, and $x'_{i2} = x_{i2} + \xi_iq'$). Since the $(i,j)$-entry of $(X_1^T)^{-1}$ is $(-1)^{i+j}/\det X_1$ times the determinant of the $2\times 2$ matrix obtained from $X_1$ by deleting the $i$'th row and $j$'th column, and since $\det X_1 \equiv 1\modc{q'}$, the following three congruences then follow from $AX_1 \equiv (X_1^T)^{-1}A \modc{q'}$
		\begin{align*}
			a_1x'_{13} &\equiv a_3(x_{21}'x_{32}' - x_{31}'x_{22}') \modc{q'},\\	
			a_2x'_{23} &\equiv a_3(x_{12}'x_{31}' - x_{11}'x_{32}') \modc{q'},\\
			a_3x'_{33} &\equiv a_3(x_{11}'x_{22}' - x_{21}'x_{12}') \modc{q'}.
		\end{align*}
		Now, notice that $q'\Z_p = p^\delta a_3q\Z_p$, and so there exists $\alpha_1,\alpha_2,\alpha_3\in \Z_p$ such that 
		\begin{align*}
			a_1x'_{13} &= a_3(x_{21}'x_{32}' - x_{31}'x_{22}') + p^\delta a_3q \alpha_1,\\		
			a_2x'_{23} &= a_3(x_{12}'x_{31}' - x_{11}'x_{32}') + p^\delta a_3q \alpha_2, \\
			a_3x'_{33} &= a_3(x_{11}'x_{22}' - x_{21}'x_{12}') + p^\delta a_3q\alpha_3.
		\end{align*}
		A straightforward computation then yields 
		\[\sum_{i=1}^3 a_i x'_{ij} \left(x'_{i3} - p^\delta q \frac{a_{3}}{a_i}\alpha_i \right) = 0 \qquad \text{for } j=1,2,\]
		where $a_3/a_1, a_3/a_2 \in \Z_p$. Consider $X_2 = (x_{ij}'')$ given by $x_{ij}'' = x_{ij}'$ for $j=1,2$, and $x_{i3}'' = x'_{i3} - p^\delta q \frac{a_{3}}{a_i}\alpha_i$; then $X_2 \equiv X_1 \equiv X \modc{p^\delta q}$ and $X_2^TAX_2 = \diag\{\mu_1, \mu_2, \mu_3\}$ where 
		\begin{align*}
			\mu_1 &= \sum_{i=1}^3 a_ix_{i1}^2 \\
			\mu_2 &= \sum_{i=1}^3 a_i(x'_{i2})^2 = \sum_{i=1}^3 a_i(x_{i2} + q'\xi_i)^2 \\
			\mu_3 &= \sum_{i=1}^3 a_i(x''_{i3})^2 = \sum_{i=1}^3 a_i \left(x_{i3} - p^\delta q \frac{a_{3}}{a_i} \alpha_i\right)^2 .
		\end{align*}
		For $j=1,2$, it is clear that $\mu_j \equiv \sum_{i=1}^3 a_ix_{ij}^2 \equiv a_j \modc{q'}$. For $\mu_3$, a simple computation gives 
		\begin{align*}
			\mu_3 &= \sum_{i=1}^3 a_ix_{i3}^2 + p^\delta a_3q\left(-2\alpha_1 x_{13} + p^\delta q\left( \frac{a_3}{a_1} \right) \alpha_1^2 - 2\alpha_2 x_{23} + p^\delta q\left( \frac{a_3}{a_2} \right) \alpha_2^2 - 2\alpha_3 x_{33} + p^\delta q\alpha_3^2\right) \\ &\equiv a_3\modc{q'}.
		\end{align*}
		Thus $\mu_j/a_j \equiv 1 \modc{p^\delta q}$ for $j=1,2,3$. The local square theorem (see \cite[63:1]{omeara}) then implies that $\mu_j = a_j(\mu_j')^2$ for some $\mu_j'\in \Z_p^\times $ satisfying $\mu_j' \equiv 1\modc{q}$ (the $p^\delta$ term guarantees these congruences when $p=2$). Therefore, the matrix $Y := (\diag\{\mu_1', \mu_2', \mu_3'\})^{-1}X_2$ satisfies $Y\equiv X\modc{q}$ and $$Y^TAY = (\diag\{(\mu_1')^{-2}, (\mu_2')^{-2}, (\mu_3')^{-2}\})X^TAX = A.$$ 
		Finally, as $\det Y\equiv \det X \equiv 1\modc{q}$ and $(\det Y)^2 = 1$, it follows that $\det Y = 1$ if $q>2$. For $q=2$ however, notice that we may flip the sign of any of the $\mu_j$'s if necessary so that $\det Y = 1$ in this case as well; as $-1\equiv 1 \modc{q}$ we still maintain $Y\equiv X\modc{q}$. Hence, we have found $Y\in G'$ such that $Y\equiv X\modc{q}$, and so $\Psi$ is surjective.
		
		Finally, it is clear that $\Psi$ maps $O^+(NL_p+\vec h)$ into $H$, since any $\sigma \in O^+(NL_p + \vec h)$ preserves $\vec h$ modulo $NL_p = N\Z_p \vec e_1 + N\Z_p \vec e_2 + N\Z_p \vec e_3$. On the other hand, for $X\in H$, by the surjectivity of $\Psi$ we have $\sigma \in O^+(L_p)$ such that $\sigma \vec h \equiv X\vec h \equiv \vec h \modc{q}$ (here, $\sigma \vec h = Y\vec h$ where $Y = \Psi_1\sigma \equiv X\modc{q}$ is the coordinate matrix of $\sigma$ in the standard basis $\vec e_i$). Thus $\sigma \vec h \equiv \vec h\modc{NL_p}$ (noting that $N\Z_p = q\Z_p$), and so $\sigma(NL_p + \vec h) = NL_p + \vec h$. Hence $\Psi$ maps $O^+(NL_p+\vec h)$ onto $H$. 
	\end{proof}
	
	\begin{corollary_prop}\label{cor::grp_index_using_matrix_reduction}
		With the same conditions and notation of Proposition \ref{prop::matrix_reduction_of_isometries}, we have $$\displaystyle [O^+(L_p): O^+(NL_p+\vec h)] = [G_{L,q',q} : H_{L,\vec h, q',q}] = \frac{|G_{L,q',q}|}{|H_{L,\vec h, q',q}|}.$$
	\end{corollary_prop}
	\begin{proof}
		This follows from elementary group theory, noting that $\Psi(O^+(L_p)) = G$, $\Psi(O^+(NL_p+\vec h)) = H$, and $\ker \Psi\subset O^+(NL_p+\vec h)$.
	\end{proof}
	From a computational viewpoint Proposition \ref{prop::matrix_reduction_of_isometries} is useful. The orbit of a lattice coset $NL_p + \vec h$ ($\vec h = h_1 \vec e_1 + h_2 \vec e_2 + h_3\vec e_3$) under the infinite group $O^+(L_p)$ is the same as the orbit of vectors $(h_1, h_2, h_3) \in (\Z/q\Z)^3$ under the finite group $G$ (the latter group action is simply multiplication of a matrix with a vector modulo $q$). If $L$ is a class 1 lattice, then the classes in the orbit of $NL+\vec h$ under $O_{\A}(V)$ are the same as the classes in the orbit of $NL+\vec h$ under $O_{\A}(L) = \prod_p O^+(L_p)$, the latter of which can be computed by the action of $\prod_p G_{L,q',q}$ on $\vec h \modc{q}$. Since $G_{L,q',q}$ is finite, and since we only need to study the action of $G_{L,q',q}$ on $NL_p+\vec h$ for the finite number of primes $p|N$, it is possible for a computer to list the genus of $NL+\vec h$ by listing out all elements in $G_{L,q',q}$. Lemma \ref{lem::Mass_Formula} can then be used to prove that this list is complete, where $[O^+(L_p): O^+(NL_p+\vec h)]$ is computed using Corollary \ref{cor::grp_index_using_matrix_reduction} (the computer should have already listed out all elements of $G$). Of course, for extremely large values of $N$ and $dL$, this computation may not necessarily be feasible due to the large value of $q'$. For reasonable values on the other hand, we now have a simple algorithm to find the genus of $NL+\vec h$ whenever $L$ has class number 1. 
	
	This leaves the problem of listing out elements of $G_{L,q',q}=G$. In small cases, this is an elementary exercise in solving a system of congruences (an example of which is carried out in section \ref{sctn::classify_cond2}). In larger cases, a computer may be able to list out all matrices $X\in SL_3(\Z/q'\Z)$ satisfying $X^TAX \equiv A\modc{q'}$ and reducing modulo $q$. A simple straightforward method would be to first find all $\vec x_j = (x_{1j}, x_{2j}, x_{3j})^T\in (\Z/q'\Z)^3$ satisfying $a_1x_{1j}^2 + a_2x_{2j}^2 + a_3x_{3j}^2 \equiv a_j \modc{q'}$ for each $j=1,2,3$, and then go through every choice of columns (one from each list) and check whether $X = [\vec x_1 \,\, \vec x_2 \,\, \vec x_3]$ satisfies all conditions. This brute force approach can be quite slow for large values of $q'$, but smarter, more efficient approaches to listing out all elements of $G_{L,q',q}$ can reduce computation time.
	
	The next step in the classification of spinor genera would be to compute the number of spinor genera, using equation (\ref{eqn::num_spn_in_gen}). The following two lemmas do this in some special cases.
	%
	%
	\begin{lemma}\label{lem::numspn_theta_triv_p_ne_q}
		Fix a prime number $q$. Suppose $NL+\vec h$ is a lattice coset such that the following conditions hold
		\begin{itemize}
			\item $\theta(O^+(NL_p+\vec h)) = \Z_p^\times (\Q_p^\times )^2$ for all primes $p\ne q$
			\item Either $\theta(O^+(NL_q+\vec h)) \subseteq \Z_q^\times (\Q_q^\times)^2$, or $q(\Q_q^\times)^2 \in \theta(O^+(NL_q+\vec h)) $.
		\end{itemize}
		Then the number of spinor genera in $\gen^+(NL+\vec h)$ is $[\Z_q^\times (\Q_q^\times )^2:\theta(O^+(NL_q+\vec h)) \cap \Z_q^\times (\Q_q^\times )^2]$.
	\end{lemma}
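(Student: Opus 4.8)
By equation (\ref{eqn::num_spn_in_gen}), the number of spinor genera in $\gen^+(NL+\vec h)$ equals $[J_\Q : \Q^\times\prod_v H_v]$, where $H_v := \theta(O^+(NL_v+\vec h))$, the product runs over all places $v$ of $\Q$, and the archimedean factor is $H_\infty = \theta(O^+(V_\infty)) = (\R^\times)^2 = \R_{>0}$ since $Q_{\vec a}$ is positive definite (every real rotation is a product of two symmetries in vectors of positive norm, hence has positive spinor norm). By hypothesis $H_p = \Z_p^\times(\Q_p^\times)^2$ for every finite prime $p\ne q$. The plan is to simplify the subgroup $\Q^\times\prod_v H_v$ one place at a time, absorbing everything except the factor at $q$ into the principal ideles, and then to evaluate the remaining index.

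First I would discard the valuations away from $q$, showing $\Q^\times\prod_v H_v = \Q^\times\prod_v H_v'$ with $H_p' = \Z_p^\times$ for finite $p\ne q$, $H_q' = H_q$, and $H_\infty' = \R_{>0}$: given $(x_v)\in\prod_v H_v$, write $x_p = u_p\,p^{2n_p}$ with $u_p\in\Z_p^\times$ and $n_p\in\Z$ (zero for almost all $p$), set $r := \prod_{p\ne q}p^{2n_p}\in\Q^\times$, and note that $r^{-1}(x_v)$ has unit component at every finite $p\ne q$, $q$-component still in $H_q$ (because $r^{-1}\in(\Q_q^\times)^2$), and positive archimedean component, so $(x_v)\in\Q^\times\prod_v H_v'$; the reverse inclusion is immediate. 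Next I would normalize the factor at $q$ to $H_q'' := H_q\cap\Z_q^\times(\Q_q^\times)^2$: if $H_q\subseteq\Z_q^\times(\Q_q^\times)^2$ there is nothing to do, while if $q(\Q_q^\times)^2\in H_q$ then $q^\Z\subseteq H_q$, so $H_q = q^\Z(H_q\cap\Z_q^\times)$, and clearing the $q$-power in the $q$-component of any element of $\prod_v H_v'$ by a single rational $q^m\in\Q^\times$ (a unit at all finite $p\ne q$, positive at $\infty$) gives $\Q^\times\prod_v H_v' = \Q^\times\prod_v H_v''$. Either way $\Q^\times\prod_v H_v = \Q^\times\prod_v H_v''$ with $H_q''\subseteq\Z_q^\times(\Q_q^\times)^2$ and $H_p'' = \Z_p^\times$ for finite $p\ne q$, $H_\infty'' = \R_{>0}$.

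For the index I would use the standard fact $J_\Q = \Q^\times W$ with $W := \Q_q^\times\times\prod_{p\ne q}\Z_p^\times\times\R_{>0}$ (any idele becomes, after dividing by a suitable rational, a unit at every finite $p\ne q$ with positive real component). Since $\prod_v H_v''\subseteq W$, the second isomorphism theorem gives $[J_\Q:\Q^\times\prod_v H_v''] = [W : W\cap\Q^\times\prod_v H_v'']$, and as $W\cap\Q^\times = q^\Z$ this is $[W : q^\Z\prod_v H_v'']$. Multiplying by $q^\Z$ changes only the $q$-component (the remaining components stay in $\prod_{p\ne q}\Z_p^\times\times\R_{>0}$), so this index equals $[\Q_q^\times : q^\Z H_q'']$; writing $\Z_q^\times(\Q_q^\times)^2 = q^{2\Z}\Z_q^\times$ and $H_q'' = q^{2\Z}(H_q''\cap\Z_q^\times)$ and splitting the free cyclic factor $q^\Z$ off $\Q_q^\times$, this is $[\Z_q^\times(\Q_q^\times)^2 : H_q''] = [\Z_q^\times(\Q_q^\times)^2 : \theta(O^+(NL_q+\vec h))\cap\Z_q^\times(\Q_q^\times)^2]$, which is the claim.

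The argument contains no deep step; it is a careful bookkeeping computation in $J_\Q/\Q^\times$ with local square classes. The one place where the second hypothesis is genuinely used — and which must be gotten right — is the normalization at $q$: the condition ``$H_q\subseteq\Z_q^\times(\Q_q^\times)^2$ or $q(\Q_q^\times)^2\in H_q$'' is exactly what permits the $q$-adic valuation part of $H_q$ to be absorbed into the principal ideles, so that $\Q^\times\prod_v H_v = \Q^\times\prod_v H_v''$; if $H_q$ contained an element of odd $q$-valuation but not $q$ itself this would fail and the count would change by a factor of $2$. One must also take $H_\infty = \R_{>0}$ rather than $\R^\times$, which is legitimate precisely because the lattice is definite.
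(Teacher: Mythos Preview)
Your proof is correct and takes a genuinely different route from the paper's. The paper argues by exhibiting explicit coset representatives: it picks id\`eles $\mf i^{(1)},\dots,\mf i^{(n)}$ supported only at $q$, with $\mf i^{(k)}_q$ running over representatives of $\Z_q^\times(\Q_q^\times)^2/(\theta(O^+(NL_q+\vec h))\cap\Z_q^\times(\Q_q^\times)^2)$, then checks by direct case analysis (using the second hypothesis to rule out the possibility $x\in q\Q^2$) that these are distinct and exhaustive modulo $\Q^\times P$. Your approach is more structural: you successively normalize the local factors so that the subgroup sits inside the ``$S$-unit'' group $W=\Q_q^\times\times\prod_{p\ne q}\Z_p^\times\times\R_{>0}$, invoke $J_\Q=\Q^\times W$, and finish with a second-isomorphism-theorem calculation that reduces the global index to the local one at $q$. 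Your version makes the role of each hypothesis more transparent (the second condition is isolated cleanly as the step permitting $H_q\leadsto H_q''$), and it handles the archimedean place explicitly where the paper leaves it implicit; the paper's version has the virtue of producing the coset representatives concretely, which is sometimes what one actually wants downstream. Both are standard techniques for id\`ele-class index computations, and both yield the result with the same amount of work.
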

	\begin{proof}
		Let $n = [\Z_q^\times (\Q_q^\times )^2:\theta(O^+(NL_q+\vec h)) \cap \Z_q^\times (\Q_q^\times )^2]$ and $P = \prod_p \theta(O^+(NL_p + \vec h))$. By equation (\ref{eqn::num_spn_in_gen}), we just need to prove that $[J_\Q : \Q^\times P] = n$. Let $\mf i^{(k)}$ ($1\le k\le n$) be \idele s in $J_\Q$ such that $\mf i_p^{(k)} = 1$ for and $p\ne q$ all $1\le k\le n$, and such that $\{\mf i_q^{(k)} : 1\le k\le n\}$ is a complete set of coset-representatives of the quotient group $\Z_q^\times (\Q_q^\times )^2/ \left(\theta(O^+(NL_q+\vec h)) \cap \Z_q^\times (\Q_q^\times )^2\right)$, where (without loss of generality), each of the $\mf i_q^{(k)}$ are $q$-adic units, and $\mf i_q^{(1)} = 1$. We will be done if we show that $\{\mf i^{(k)} : k=1,...,n\}$ is a complete set of coset-representatives of $J_\Q / (\Q^\times P)$.
		
		To check that $\{\mf i^{(k)}:1\le k\le n\}$ is a set of $n$ distinct coset representatives of $J_\Q$ modulo $\Q^\times P$, notice that $\mf i^{(k)}(\mf i^{(k')})^{-1} \in \Q^\times P$ is equivalent to $\mf i^{(m)}\in \Q^\times P$ where $m=1,2,...,n$ satisfies $(\mf i^{(m)}_q) \in \mf i_q^{(k)}(\mf i_q^{(k')})^{-1} (\Q^\times P)$ (such an $m$ exists as the $\mf i_q^{(k)}$ are a a complete of set of representatives modulo $\Q^\times P$). Thus it suffices to check that each of the $\mf i^{(k)}$ are non-trivial for $k\ge 2$ modulo $\Q^\times P$. So suppose that $\mf i^{(k)}$ ($k\ge 2$) is trivial, i.e. there exists $x\in \Q^\times$ such that $x\mf i^{(k)}\in P$. Then $x = x\mf i_p^{(k)}\in \Z_p^\times (\Q_p^\times )^2$ for all $p\ne q$, and $x\mf i_q^{(k)}\in \theta(O^+(NL_q+\vec h))$. The former implies that $\ord_p x$ is even for all primes $p\ne q$, and so $x \in \Q^2 \cup q\Q^2$. If $x = y^2$ for some $y\in \Q^\times $, then $x\in (\Q_q^\times )^2$ and so $\mf i_q^{(k)} \in \theta(O^+(NL_q+\vec h))$, contradicting the assumption that $\mf i_q^{(k)}$ is a $q$-adic unit not in $\theta(O^+(NL_q + \vec h))$. Now suppose $x=qy^2$ for some $y\in \Q^\times$; then $q\mf i_q^{(k)} \in \theta(O^+(NL_q+\vec h))$. By the second condition, this implies that $q\in \theta(O^+(NL_q+\vec h))$, and so $\mf i_q^{(k)} \in \theta(O^+(NL_q+\vec h))$, which is again a contradiction. 
		
		Now, consider any \idele{} $\mf j\in J_\Q$. Since $\mf j_p$ is a unit for almost all $p$, by multiplying by some suitable non-zero rational number $x$ whose sign is the same as that of $\mf j_\infty$, we may suppose that $x\mf j_p$ is a unit for all primes $p$. By construction $x\mf j_q$ is in $\mf i^{(k)}_q \theta(O^+( NL_q +\vec h))$ for some $k$, and thus $x\mf j$ is in $\mf i^{(k)} P$, or equivalently $\mf j \in \mf i^{(k)} (\Q^\times P)$. Hence $\{\mf i^{(k)} : 1\le k\le n\}$ is a complete set of coset representatives.
	\end{proof}
	
	In light of Corollary \ref{cor::spnmap_img_of_modular_lattice}, the above lemma is useful in the special case where $N$ and $dL$ are both powers of a prime. However, if this does not hold, the requirement that $\theta(O^+(NL_p+\vec h)) = \Z_p^\times (\Q_p^\times)^2$ for all primes $p\ne q$ may be too strict, in which case the below lemma is useful as well.
	
	\begin{lemma}\label{lem::numspn=2_ind2subgrp_p=2_nonon_trivial_unit_squares_p=q}
		Fix two prime numbers $q$ and $q'$, where $q'\ne 2$, and $q$ is a quadratic nonresidue modulo $q'$. Suppose $NL+\vec h$ is a lattice coset such that 
		\begin{itemize}
			\item $\theta(O^+(NL_p+\vec h)) = \Z_p^\times (\Q_p^\times)^2$ for all $p\ne q,q'$.
			\item $\theta(O^+(NL_{q'} + \vec h)) = \{1, q'\}(\Q_{q'}^ \times)^2$.
			\item $\theta(O^+(NL_q+ \vec h))$ is a subgroup of $\Z_q (\Q_q^\times)^2$ with group index at most 2.
			\item If $\theta(O^+(NL_q+ \vec h))\ne \Z_q(\Q_q^\times)^2$, then $q' (\Q_{q}^\times)^2 \not \in \theta(O^+(NL_q+\vec h))$.
		\end{itemize}
		Then, there are two spinor genera in the genus of $NL+\vec h$.
	\end{lemma}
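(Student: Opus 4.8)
\emph{Proof strategy.} The plan is to apply equation~(\ref{eqn::num_spn_in_gen}): the number of spinor genera in $\gen^+(NL+\vec h)$ equals the index $[J_\Q : \Q^\times P]$, where $P := \prod_p \theta(O^+(NL_p+\vec h))$ with the product over all places, the archimedean factor being $\theta(O^+(V_\infty)) = \R_{>0}$ since $Q_{\vec a}$ is positive definite. So it suffices to prove $[J_\Q : \Q^\times P] = 2$. Following the template of the proof of Lemma~\ref{lem::numspn_theta_triv_p_ne_q}, I would take the two \idele s $\mf i^{(1)} := 1$ and $\mf i^{(2)} := \mf i$, where $\mf i$ is the \idele{} having $\mf i_{q'} = q \in \Z_{q'}^\times$ and $\mf i_p = 1$ at every other place, and show that $\{\mf i^{(1)}, \mf i^{(2)}\}$ is a complete set of coset representatives of $J_\Q$ modulo $\Q^\times P$. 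Here $\Z_q^\times(\Q_q^\times)^2$ denotes the group of $q$-adic units modulo squares, as in Lemma~\ref{lem::numspn_theta_triv_p_ne_q}.

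For the lower bound I would show $\mf i \notin \Q^\times P$. Writing $\mf i = x\mf p$ with $x \in \Q^\times$, $\mf p \in P$: comparing the archimedean components forces $x > 0$, and comparing the components at each prime $p \neq q, q'$ (where the $P$-factor is $\Z_p^\times(\Q_p^\times)^2$) and at $q$ (where it lies inside $\Z_q^\times(\Q_q^\times)^2$) forces $\ord_p x$ to be even at every finite prime except possibly $q'$. If $\ord_{q'} x$ is also even then $x \in (\Q^\times)^2$, and comparing $q'$-components yields $q \in \{1,q'\}(\Q_{q'}^\times)^2$, contradicting that $q$ is a nonresidue modulo $q'$. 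If $\ord_{q'} x$ is odd then $x \in q'(\Q^\times)^2$, and comparing $q'$-components yields $qq' \in \{1,q'\}(\Q_{q'}^\times)^2$, which fails for the same reason. Hence $[J_\Q : \Q^\times P] \geq 2$ (the fourth hypothesis is not needed here).

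For the upper bound I would take an arbitrary $\mf j \in J_\Q$ and show $\mf j$ lies in $\Q^\times P \cup \mf i\,\Q^\times P$. Since $\Z$ is a principal ideal domain, after multiplying $\mf j$ by a suitable rational number with the same sign as $\mf j_\infty$ I may assume $\mf j_p \in \Z_p^\times$ at every finite prime and $\mf j_\infty > 0$; then $\mf j$ already lies in the correct $P$-factor at $\infty$ and at every prime $p \neq q, q'$. At $q'$ the square class of $\mf j_{q'}$ is either trivial or the unique nontrivial unit class, and in the second case I replace $\mf j$ by $\mf i^{-1}\mf j$ — which only shifts it between the two candidate cosets — so that, since $q$ is a nonresidue modulo $q'$, the $q'$-component becomes a square and hence lies in $\theta(O^+(NL_{q'}+\vec h)) = \{1,q'\}(\Q_{q'}^\times)^2$. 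It remains to fix the $q$-component. If $\theta(O^+(NL_q+\vec h)) = \Z_q^\times(\Q_q^\times)^2$ there is nothing to do. Otherwise $\theta(O^+(NL_q+\vec h))$ has index $2$ in $\Z_q^\times(\Q_q^\times)^2$, and the fourth hypothesis says $q' \cdot \theta(O^+(NL_q+\vec h))$ is its nontrivial coset; so if $\mf j_q$ is not already in $\theta(O^+(NL_q+\vec h))$, multiplying $\mf j$ by the rational $q'$ pushes the $q$-component into $\theta(O^+(NL_q+\vec h))$ while keeping the $q'$-component in $\{1,q'\}(\Q_{q'}^\times)^2$. Now $\mf j$ lies componentwise in $P$, so $\mf j \in \Q^\times P$ or $\mf j \in \mf i\,\Q^\times P$; together with the lower bound this gives $[J_\Q : \Q^\times P] = 2$.

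The hard part is the coupling between the primes $q$ and $q'$ in the upper bound: once the $q'$-component has been arranged, the only rational scalar available to correct the $q$-component without destroying the $q'$-component is $q'$ itself, and the fourth hypothesis is precisely what makes multiplication by $q'$ land the $q$-component inside $\theta(O^+(NL_q+\vec h))$ (using $q' \in \{1,q'\}(\Q_{q'}^\times)^2$). Everything else is routine $p$-adic bookkeeping — chiefly that $q(\Q_{q'}^\times)^2$ and $qq'(\Q_{q'}^\times)^2$ both avoid $\{1,q'\}(\Q_{q'}^\times)^2$ when $q$ is a quadratic nonresidue modulo $q'$, and that the exceptional unit square class at $q'$ is represented by $q$.
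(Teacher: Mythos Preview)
Your proof is correct and follows essentially the same approach as the paper's: exhibit the \idele{} $\mf i$ concentrated at $q'$ with value $q$, show it is nontrivial modulo $\Q^\times P$ by a square-class chase, and then show every \idele{} reduces to $P$ or $\mf i P$ after multiplying by a suitable rational (using $q'$ as the rational correction at $q$, which is exactly where the fourth hypothesis enters). The only cosmetic differences are that the paper first disposes of the index-$1$ case at $q$ by invoking Lemma~\ref{lem::numspn_theta_triv_p_ne_q}, and in the surjectivity step adjusts the $q$-component before the $q'$-component rather than after; your observation that the fourth hypothesis is unnecessary for the lower bound is also correct.
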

	\begin{proof}
		If $\theta(O^+(NL_q + \vec h))$ has group index 1 in $\Z_q(\Q_q^\times)^2$, then $\theta(O^+(NL_q + \vec h)) = \Z_q(\Q_q^\times)^2$, and so the previous lemma is applicable. Noting that $\Z_{q'}(\Q_{q'}^\times)^2 \cap \theta(O^+(NL_{q'}+\vec h))$ is the trivial subgroup of the order 2 group $\Z_{q'}(\Q_{q'}^\times)^2$, it follows that there are two spinor genera in the genus. From now onwards, suppose that $\theta(O^+(NL_q+ \vec h))$ has group index exactly 2 in $\Z_q (\Q_q^\times)^2$, which also implies that $q' (\Q_{q}^\times)^2 \not \in \theta(O^+(NL_q+\vec h))$.
		
		Let $P = \prod_p \theta(O^+(NL_p + \vec h))$; we want to show that $\Q^\times P $ is an index 2 subgroup of $J_\Q$. Consider $\mf i \in J_\Q$ such that $\mf i_{q'} = q$, and $\mf i_p = 1$ for all $p\ne q'$. Note that $\mf i_{q'}$ is a coset representative of the only non-trivial coset of $\Q_{q'}^\times/(\Q_{q'}^\times)^2$ modulo $\theta(O^+(NL_{q'}+ \vec h))$, since $q$ is a quadratic non-residue modulo $q'$. Suppose, to the contrary, that $\mf i \in \Q^\times P$. There must then exist $x\in \Q^\times$ such that $x\mf i\in P$. From $x = x\mf i_p \in \Z_p^\times (\Q_p^\times)^2$ for $p\ne q,q'$, it follows that $\ord_p x$ is even for these primes. Hence, there exists $y\in \Q^\times$ such that $x=ty^2$, where $t=1,q,q'$, or $qq'$. From $qx = x\mf i_{q'} \in \theta(O^+(NL_{q'} + \vec h))$ it follows that $qt \in \{1,q'\} (\Q_{q'}^\times)^2$; as $q$ is not in $(\Q_{q'}^\times)^2$ and $t\in \{1,q,q',qq'\}$, we must have $t = q$ or $t=qq'$. Either way, $x = ty^2 \not \in \Z_q^\times (\Q_q^\times)^2$, contradicting the fact that $x = x\mf i_q \in \theta(O^+(NL_q + \vec h)) \subset \Z_q (\Q_q^\times)^2$. Hence $\mf i$ is a non-trivial coset representative of $J_\Q$ modulo $\Q^\times P$. 
		
		Next consider any $\mf j\in J_\Q$. Since $\mf j_p$ is a unit for almost all $p$, there exists $x\in \Q$ whose sign is the same as that of $\mf j_\infty$ such that $x\mf j_p \in \Z_p^\times (\Q_p^\times)^2$ for all $p\ne q'$. By the fourth assumption, $q'(\Q_q^\times)^2$ is a coset representative of the non-trivial coset of $\Z_q^\times (\Q_q^\times)^2$ modulo $\theta(O^+(NL_{q}+ \vec h))$; hence $x\mf j_q$ is either in $\theta(O^+(NL_{q}+ \vec h))$ or in $q'\theta(O^+(NL_{q}+ \vec h))$. If the latter holds, then we may replace $x$ by $q'x$ so that, without loss of generality, we can assume that $x\mf j_q\in \theta(O^+(NL_{q}+ \vec h))$ (while $x\mf j_p \in \Z_p^\times (\Q_p^\times)^2$ for all $p\ne q'$ continues to hold). It is now easy to see from $\theta(O^+(NL_{q'}+\vec h)) = \{1,q'\}(\Q_{q'}^\times)^2$ that either $x\mf j\in P$, or $x\mf j\in \mf iP$. Hence $\mf j$ is in either $\Q^\times P$, or in $\mf i \cdot \Q^\times P$, and so the group index given in equation (\ref{eqn::num_spn_in_gen}) is exactly 2. Therefore, there are two spinor genera in the genus.
	\end{proof}
	We will use Lemma \ref{lem::numspn=2_ind2subgrp_p=2_nonon_trivial_unit_squares_p=q} with $q=2$ and $q'=3$.
	
	\subsection{Classification of Lattice Cosets Found}\label{sctn::classify_found_cosets}
	In this sub-section, we shall use the convention that $L_{\vec a} \cong \diag\;\vec a$ in the standard basis $\vec e_1, \vec e_2,\vec e_3$ for $\Q^3$. Usually we shall abbreviate $L_{\vec a}$ to simply $L$ when $\vec a$ is understood from context.
	
	Let us first classify the genus of most of the lattice cosets listed in Appendix \ref{sctn::list_spn1_identities}. 
	\begin{proposition}\label{prop::classify_genus_all_listed}
		For each coset $NL_{\vec a}+\vec h_0$ given in Table \ref{table::genus_info} in Appendix \ref{sctn::gen_info}, the set $\mc G = \{NL_{\vec a} +\vec h_0, NL_{\vec a} +\vec h_1, ...\}$ given in the table is a complete set of distinct class representatives of $\gen^+(NL_{\vec a} + \vec h_0)$.
	\end{proposition}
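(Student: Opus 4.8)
The plan is to treat Proposition~\ref{prop::classify_genus_all_listed} as a finite verification carried out entry by entry with the machinery of Section~\ref{sctn::lattice_cosets}. First I would record that for each $\vec a$ occurring in Table~\ref{table::genus_info} the lattice $L_{\vec a}$ has class number~$1$; these diagonal ternaries are among the finitely many class-number-one forms and can be checked against the standard tables (or recomputed directly). Granting this, the discussion following Lemma~\ref{lem::characterize_O(L)} shows that every class in $\gen^+(NL_{\vec a}+\vec h_0)$ has a representative of the form $NL_{\vec a}+\vec h'$ with $\vec h'\in L_{\vec a}$, and that the orbit of $NL_{\vec a}+\vec h_0$ under $O_{\A}(V)$ coincides with its orbit under $O_{\A}(L_{\vec a})=\prod_p O^+((L_{\vec a})_p)$. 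By Proposition~\ref{prop::matrix_reduction_of_isometries} this latter orbit is computed by the action of the finite group $\prod_{p\mid N}G_{L_{\vec a},q',q}$ on the image of $\vec h_0$ in $(\Z/q\Z)^3$, where $q$ and $q'$ are the prime powers defined there.

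The enumeration step is then the following: for each prime $p\mid N$, list $G_{L_{\vec a},q',q}$ by solving $X^{T}AX\equiv A\modc{q'}$ together with $\det X\equiv 1\modc{q'}$ — most efficiently column by column, as described after Corollary~\ref{cor::grp_index_using_matrix_reduction} — and reduce modulo $q$; then compute the orbit of $\vec h_0\modc{q}$ under the product action and pick orbit representatives, which produces the candidate set $\mc G=\{NL_{\vec a}+\vec h_0,NL_{\vec a}+\vec h_1,\dots\}$ appearing in the table. Next I would verify that the listed cosets are pairwise inequivalent: by Lemma~\ref{lem::characterize_O(L)} the class of $NL_{\vec a}+\vec h'$ is the (finite) orbit of $\vec h'\modc{NL_{\vec a}}$ under those signed permutations of the coordinates that preserve the multiset $\{a_1,a_2,a_3\}$ and have determinant~$1$, so distinctness of classes is a finite combinatorial check on the vectors $\vec h_i$.

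Finally, completeness is forced by the Mass Formula. Using Lemma~\ref{lem::characterize_O(L)} one computes $|O^+(L_{\vec a})|$ (the number of admissible signed permutations) and, for each $\vec h_i$, the order of $O^+(NL_{\vec a}+\vec h_i)$ as the size of the stabilizer of $\vec h_i\modc{NL_{\vec a}}$ inside that finite group; one also computes each local index $[O^+((L_{\vec a})_p):O^+(N(L_{\vec a})_p+\vec h_0)]=|G_{L_{\vec a},q',q}|/|H_{L_{\vec a},\vec h_0,q',q}|$ via Corollary~\ref{cor::grp_index_using_matrix_reduction} (the subgroup $H$ is already available once $G$ has been listed). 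Checking that equation~(\ref{eqn::mass_formula}) holds numerically for the candidate $\mc G$ then proves that no further classes exist, completing the proof for that entry. I expect the main obstacle to be purely computational scale: when $N$ and $dL_{\vec a}=a_1a_2a_3$ are large, the modulus $q'=p^{k+\ell+\delta}$ grows and $|SL_3(\Z/q'\Z)|$ grows with it, so a naive enumeration of $G$ becomes infeasible and one must exploit the column-by-column structure and the local square-class reductions. A secondary point requiring care is the $\delta$-shift at $p=2$, which must be carried exactly as in Proposition~\ref{prop::matrix_reduction_of_isometries} for the reduction $O^+((L_{\vec a})_2)\twoheadrightarrow G$ to remain valid.
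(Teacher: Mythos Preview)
Your proposal is correct and follows essentially the same strategy as the paper's proof: use Proposition~\ref{prop::matrix_reduction_of_isometries} to exhibit each $NL_{\vec a}+\vec h_i$ as lying in the genus via matrices in $G_{L,q',q}$, check pairwise distinctness of classes using the finite description of $O^+(L)$ from Lemma~\ref{lem::characterize_O(L)}, and confirm completeness via the Mass Formula (Lemma~\ref{lem::Mass_Formula}) with the local indices computed through Corollary~\ref{cor::grp_index_using_matrix_reduction}. The paper presents this as a verification against the explicit data already recorded in Table~\ref{table::genus_info} rather than as an enumeration from scratch, but the underlying logic is identical.
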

	\begin{proof}
		Notice that, for all $p|N$, the matrix $X_{i,p} \in G_{L, q', q}$ given in Table \ref{table::genus_info} satisfies $X_{i,p}\vec h_0 \equiv \vec h_i \modc{q}$. It then follows that any rotation $\Sigma_p$ in $\Psi^{-1}X_{i,p}$ (where $\Psi$ is from Proposition \ref{prop::matrix_reduction_of_isometries}) sends $NL+\vec h_0$ to $NL+\vec h_i$. By setting $\Sigma_p$ to be the identity on $V_p$ for all other $p$, and noticing that $NL_p + \vec h_0 = L_p = NL_p + \vec h_i$ for such $p$, we get an element $\Sigma \in O^+_\A(V)$ such that $\Sigma(NL+\vec h_0) = NL+\vec h_i$. Thus every coset in $\mc G$ belongs to $\gen^+(NL+\vec h_0)$. 
		
		Next, notice that any rotation $\sigma$ sending $NL+\vec h_i$ to $NL+\vec h_j$ must preserve $L$, and thus $\sigma$ belongs to $O^+(L)$. Lemma \ref{lem::characterize_O(L)} tells us that this is a finite group, and that $\sigma$ essentially permutes $h_{i1}, h_{i2}, h_{i3}$ and changes the sign of some of them. From this, it is easy to check that no such $\sigma$ can exist, i.e. no element of $O^+(L)$ sends $NL+\vec h_i$ to $NL+\vec h_j$ for any pair $\vec h_i, \vec h_j$ given in Table \ref{table::genus_info}. Thus $\mc G$ is a set of distinct class representatives. 
		
		To check that the classes of the cosets in $\mc G$ exhausts the genus, we use Lemma \ref{lem::Mass_Formula}. Each of the values required for equation (\ref{eqn::mass_formula}) is listed in Table \ref{table::genus_info}, where we use the fact that $[O^+(L_p) : O^+(NL_p + \vec h_0)] = |G_{L, q',q}| / |H_{L, \vec h_0, q', q}|$ by Corollary \ref{cor::grp_index_using_matrix_reduction}. Using these values, it can be checked that the equation \[\sum_{NL+\vec h_i \in \mc G} \frac{1}{|O^+(NL+\vec h_i)|} = \frac{1}{|O^+(L)|} \prod_{p|N} \frac{|G_{L, q',q}|}{|H_{L, \vec h_0, q', q}|}\]
		holds. Lemma \ref{lem::Mass_Formula} then implies that $\mc G$ exhausts the genus.
	\end{proof}
	
	Notice that Table \ref{table::genus_info} in Appendix \ref{sctn::gen_info} does not classify the genus of all of the lattice cosets given in Appendix \ref{sctn::list_spn1_identities}, since in these cases the group $G_{L,q',q}$ was too large for the computer to compute (using the brute force algorithm) in any reasonable amount of time. In particular, the following cosets are missing:
	\begin{multline*}
		(\vec a, \vec h, N)\in \Big\{  \left((1,4,8)^T, (0,4,1)^T, 16 \right), \left((1,4,8)^T, (0,4,3)^T, 16 \right), \left((1,4,8)^T, (0,4,5)^T, 16 \right), \\ 
		\left((1,4,8)^T, (0,4,7)^T, 16 \right), \left((1,8,16)^T, (4,2,1)^T, 8 \right), \left((1,8,16)^T, (4,2,3)^T, 8 \right) \Big\}.
	\end{multline*}
	
	It remains to classify the spinor genera of those lattice cosets considered in Proposition \ref{prop::classify_genus_all_listed}. We dispose of the simplest case first: when there are only two classes in the genus.
	
	\begin{proposition}\label{prop::2cls_in_genus}
		For all 20 values of $\vec a$, $N$, $\vec h_0$, and $\vec h_1$ given below, both the lattice cosets $NL_{\vec a} + \vec h_0$ and $NL_{\vec a} + \vec h_1$ are in the same genus and have spinor class number 1.
	\end{proposition}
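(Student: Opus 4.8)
The plan is to combine the genus classification already established with a count of the spinor genera, using the elementary fact that a genus with only two classes can contain at most two spinor genera. Each of the $20$ cosets $NL_{\vec a}+\vec h_0$ in question is one of those treated in Proposition \ref{prop::classify_genus_all_listed}: it occurs as a row of Table \ref{table::genus_info} in Appendix \ref{sctn::gen_info} whose associated set of class representatives is exactly $\mc G=\{NL_{\vec a}+\vec h_0,\ NL_{\vec a}+\vec h_1\}$. By that proposition $\mc G$ is a complete set of distinct class representatives of $\gen^+(NL_{\vec a}+\vec h_0)$; in particular $NL_{\vec a}+\vec h_1\in\gen^+(NL_{\vec a}+\vec h_0)$, so the two cosets lie in a common genus, and the class number of that genus is exactly $2$.

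Next I would reduce the assertion ``spinor class number $1$'' to the single statement that $\gen^+(NL_{\vec a}+\vec h_0)$ contains more than one spinor genus. The spinor genera partition the classes of a genus into nonempty blocks, so the number of spinor genera is at most the class number, here $2$; hence once we know there is more than one spinor genus there must be exactly two, and then each of the two (distinct) classes of the genus is the unique class of its own spinor genus, which is exactly what ``spinor class number $1$ but not class number $1$'' means. So everything reduces to showing that the index in equation (\ref{eqn::num_spn_in_gen}) exceeds $1$, equivalently that $\Q^\times\prod_p\theta(O^+(NL_p+\vec h))\neq J_\Q$.

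To establish this I would bound the local spinor norm images. For every prime $p\nmid N$ we have $NL_p+\vec h=L_p$, and $\theta(O^+(L_p))$ is given by Lemma \ref{lem::conway_algorithm}; in particular $\theta(O^+(L_p))=\Z_p^\times(\Q_p^\times)^2$ whenever $p$ is odd with $p\nmid dL_{\vec a}$, by Corollary \ref{cor::spnmap_img_of_modular_lattice}. For the finitely many primes $p\mid N$ I would produce an upper bound for $\theta(O^+(NL_p+\vec h))$: for odd $p$ via the lemma adapted from Teterin in Section \ref{sctn::spinor_norm_map}, which reduces the task to describing the symmetries of $V_p$ that preserve $NL_p+\vec h$; and at $p=2$, where no convenient algorithm exists, via the inclusion $\theta(O^+(NL_2+\vec h))\subseteq\theta(O^+(M_2))$ with $M_2:=\Z_2\vec h+NL_2$, the majorant being computable by Lemma \ref{lem::conway_algorithm}. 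These bounds are collected in Appendix \ref{sctn::upperbnd_img_spn_norm}. Feeding them into Lemma \ref{lem::numspn_theta_triv_p_ne_q} when $N$ and $dL_{\vec a}$ are powers of a single prime $q$, or into Lemma \ref{lem::numspn=2_ind2subgrp_p=2_nonon_trivial_unit_squares_p=q} with $q=2$ and $q'=3$ otherwise, then exhibits a second spinor genus — concretely an \idele{} supported at one prime dividing $N\,dL_{\vec a}$ that does not lie in $\Q^\times\prod_p\theta(O^+(NL_p+\vec h))$ — which completes the proof.

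The step I expect to be the main obstacle is the bounding from above of $\theta(O^+(NL_p+\vec h))$ at primes $p\mid N$, above all at $p=2$: unlike in the lattice case there is no turnkey algorithm, so the $M_p$-majorant and the Teterin reduction have to be carried out case by case, and one must then check in each case that the resulting bound is still tight enough to satisfy the hypotheses of the relevant counting lemma. Everything else — reading off the genus from Proposition \ref{prop::classify_genus_all_listed}, the ``at most two spinor genera'' bound, and assembling the conclusion — is routine.
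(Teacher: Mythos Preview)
Your plan matches the paper's proof in outline: use Proposition \ref{prop::classify_genus_all_listed} to see the genus has exactly two classes, then show there are at least two spinor genera via Lemmas \ref{lem::numspn_theta_triv_p_ne_q} and \ref{lem::numspn=2_ind2subgrp_p=2_nonon_trivial_unit_squares_p=q}, whence each class is alone in its spinor genus. The paper also uses the $M_p$ majorant (not just at $p=2$ but at all $p\mid N$) rather than Teterin's lemma, so your suggestion to use Teterin at odd $p\mid N$ is a harmless variation.

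There is however one genuine gap. You frame the local computation entirely as producing \emph{upper} bounds for $\theta(O^+(NL_p+\vec h))$, and you expect ``tightness of the upper bound'' to be the only obstacle. For the cases handled by Lemma \ref{lem::numspn_theta_triv_p_ne_q} that is indeed enough: an upper bound on $\theta(O^+(NL_q+\vec h))\cap\Z_q^\times(\Q_q^\times)^2$ gives a lower bound on the number of spinor genera, and ``at most two classes'' caps it at two. But for the seven cosets routed through Lemma \ref{lem::numspn=2_ind2subgrp_p=2_nonon_trivial_unit_squares_p=q} (those with both $2$ and $3$ dividing $N\cdot dL_{\vec a}$), upper bounds alone do not verify the hypotheses: that lemma demands the \emph{equality} $\theta(O^+(NL_3+\vec h))=\{1,3\}(\Q_3^\times)^2$ and that $\theta(O^+(NL_2+\vec h))$ have index at most $2$ in $\Z_2^\times(\Q_2^\times)^2$. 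Both require \emph{lower} bounds on the spinor-norm image, which neither the $M_p$ majorant nor Teterin's description of symmetries will give you directly. The paper closes this gap by exhibiting, for each of the seven cosets, explicit rotations $\tau_{\vec u}\tau_{\vec w}\in O^+(NL_p+\vec h_0)$ with spinor norm $5(\Q_2^\times)^2$ at $p=2$ and $3(\Q_3^\times)^2$ at $p=3$ (Table \ref{table::2cls_in_genus__spn_info_lowerbnd}). Your plan needs this ingredient; once you add it, the argument goes through exactly as in the paper.
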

	\begin{small}
	\begin{center}
		\begin{tabular}{|c||c|c|c|c|c|c|c|}
			\hline 
			$\vec a$ & $(1,1,1)^T$ & $(1,1,2)^T$ & $(1,1,4)^T$ & $(1,1,4)^T$ & $(1,1,4)^T$ & $(1,1,8)^T$ & $(1,2,8)^T$ \\ 
			$N$ & 4 & 4 & 2 & 8 & 8 & 2 & 4 \\
			$\vec h_0$ & $(0,0,1)^T$ & $(1,1,0)^T$ & $(0,1,0)^T$ & $(0,0,1)^T$ & $(0,0,3)^T$ & $(1,1,0)^T$ & $(0,1,0)^T$ \\
			$\vec h_1$ & $(2,2,1)^T$ & $(1,1,2)^T$ & $(0,1,1)^T$ & $(4,4,3)^T$ & $(4,4,1)^T$ & $(1,1,1)^T$ & $(0,1,2)^T$ \\
			\hline
			$\vec a$& $(1,2,16)^T$ & $(1,3,3)^T$ & $(1,3,9)^T$ & $(1,3,9)^T$ & $(1,3,12)^T$ & $(1,3,12)^T$ & $(1,4,4)^T$ \\
			$N$& 2 & 6 & 4 & 6 & 3 & 6 & 2 \\%
			$\vec h_0$& $(1,0,0)^T$ & $(0,0,1)^T$ & $(1,1,0)^T$ & $(0,0,1)^T$ & $(0,1,0)^T$ & $(3,1,0)^T$ & $(1,0,0)^T$ \\%
			$\vec h_1$& $(1,0,1)^T$ & $(0,2,3)^T$ & $(0,1,1)^T$ & $(3,0,2)^T$ & $(0,0,1)^T$ & $(3,3,2)^T$ & $(1,1,1)^T$ \\%
			\hline 
			$\vec a$ & $(1,4,4)^T$ & $(1,4,8)^T$ & $(3,4,12)^T$ & $(3,4,12)^T$ & $(3,4,12)^T$ & $(3,4,36)^T$ \\
			$N$ & 4 & 4 & 3 & 6 & 12 & 4\\
			$\vec h_0$ & $(0,0,1)^T$ & $(2,1,0)^T$ & $(1,0,0)^T$ & $(0,0,1)^T$ & $(2,3,0)^T$ & $(2,1,0)^T$\\
			$\vec h_1$ & $(0,2,1)^T$ & $(2,1,2)^T$ & $(0,0,1)^T$ & $(2,0,3)^T$ & $(6,3,4)^T$ & $(2,0,1)^T$ \\
			\cline{1-7}
		\end{tabular}
	\end{center}
	\end{small}
	\begin{proof}
		From Proposition \ref{prop::classify_genus_all_listed} and Appendix \ref{sctn::gen_info}, it is easy to see that the classes of $NL+\vec h_0$ and $NL+\vec h_1$ are distinct and exhaust the genus. Suppose we show that there are exactly two spinor genera in the genus. Since each spinor genus must have at least one class, it follows that $NL+\vec h_0$ and $NL+\vec h_1$ are in different spinor genera, and thus are both spinor class number 1 lattice cosets.
		
		It now remains to check that there are exactly two spinor genera in the genus. In light of Lemmas \ref{lem::numspn_theta_triv_p_ne_q} and \ref{lem::numspn=2_ind2subgrp_p=2_nonon_trivial_unit_squares_p=q}, we need to compute $\theta(O^+(NL_p+\vec h_0))$. By Corollary \ref{cor::spnmap_img_of_modular_lattice}, we have $\theta(O^+(NL_p+\vec h_0)) = \Z_p^\times (\Q_p^\times)^2$ for all odd primes $p$ satisfying $p\nmid N$ and $p\nmid a_1a_2a_3$. For primes $p$ (possibly 2) satisfying $p|a_1a_2a_3$ but $p\nmid N$, notice that $NL_p + \vec h_0 = L_p$, and so we can use the algorithm given in Lemma \ref{lem::conway_algorithm} to compute $\theta(O^+(NL_p+\vec h)) = \theta(O^+(L_p))$. This has been done in Table \ref{table::spn_info_upperbnd} in Appendix \ref{sctn::upperbnd_img_spn_norm}. It remains to check for those primes $p$ dividing $N$.
		 
		Consider the lattice $M = \Z \vec h_0 + NL$. Notice that $M_p = \Z_p \vec h_0 + NL_p$, and any rotation in $O^+(NL_p+\vec h_0)$ preserves $NL_p$ and $\vec h_0$ modulo $NL_p$, and so also preserves $M_p$. This implies that $O^+(NL_p+\vec h_0) \subseteq O^+(M_p) \cap O^+(L_p)$, and so $\theta(O^+(NL_p + \vec h_0)) \subseteq \theta(O^+(M_p)) \cap \theta(O^+(L_p))$. The latter two groups are easily calculated using the algorithm in Lemma \ref{lem::conway_algorithm}, and the results are given in Table \ref{table::spn_info_upperbnd} in Appendix \ref{sctn::upperbnd_img_spn_norm}.
		
		From Table \ref{table::spn_info_upperbnd}, it is seen that for the lattices $L$ under consideration in this proposition, whenever both $N$ and $dL$ are a power of 2, the subgroup $\Z_2^\times(\Q_2^\times)^2 \cap \theta(O^+(M_2)) \cap \theta(O^+(L_p))$ has index exactly 2 in $\Z_2^\times (\Q_2^\times)^2$. It then follows that $\Z_2^\times(\Q_2^\times) \cap \theta(O^+(NL_2 + \vec h_0))$ has index at least two in $\Z_2^\times (\Q_2^\times)^2$ (which itself has order 4). By Lemma \ref{lem::numspn_theta_triv_p_ne_q} with $q=2$, it follows that there are either two or four spinor genera in the genus. Since there are only two classes, it then follows that there are exactly two spinor genera in the genus. 
		Similarly, for $N=3$ and \[(\vec a, \vec h_0)\in \left\{ \left( (1,3,12)^T, (0,1,0)^T \right), \left( (3,4,12)^T, (1,0,0)^T \right) \right\},\]
		notice that $3L_2 + \vec h_0 = L_2$, and so $\theta(O^+(3L_2 + \vec h_0)) = \theta(O^+(L_2)) = \Z_2^\times (\Q_2^\times)^2$. Moreover, in each case we also have $\Z_3^\times (\Q_3^\times)^2 \cap \theta(O^+(3L_3+\vec h_0)) \subseteq \Z_3^\times(\Q_3^\times)^2\cap \theta(O^+(M_3))\cap \theta(O^+(L_3)) = (\Q_3^\times)^2$, and so $\Z_3^\times (\Q_3^\times)^2 \cap \theta(O^+(3L_3+\vec h_0)) = (\Q_3^\times)^2$. Lemma \ref{lem::numspn_theta_triv_p_ne_q} with $q=3$ implies that there are exactly two spinor genera in the genus. 
		
		This takes care of all but 7 of the genera under consideration. For each of these lattices, notice that $\theta(O^+(M_2))\cap \theta(O^+(L_2)) = \Z_2^\times (\Q_2^\times)^2$ and $\theta(O^+(M_3))\cap \theta(O^+(L_3)) = \{1,3\}(\Q_3^\times)^2$. Thus $\theta(O^+(NL_2+\vec h_0)) \subseteq \Z_2^\times (\Q_2^\times)^2$ and $\theta(O^+(NL_3+\vec h_0)) \subseteq \{1,3\}(\Q_3^\times)^2$. If we can show that $5(\Q_2^\times)^2 \in \theta(O^+(NL_2+\vec h_0))$ and $3(\Q_3^\times)^2 \in \theta(O^+(NL_3+\vec h_0))$, then all conditions of Lemma \ref{lem::numspn=2_ind2subgrp_p=2_nonon_trivial_unit_squares_p=q} with $q=2$ and $q'=3$ are satisfied, and so there would be two spinor genera in the genus. 
		
		\begin{table}[H]
			\centering
			\begin{tabular}[c]{|c|c|c|c|c|} 
				\hline 
				\multirow{2}{*}{$\vec a$} & \multirow{2}{*}{$N$} & \multirow{2}{*}{$\vec h_0$} & Rotation in $O^+(NL_2+\vec h_0)$ & Rotation in $O^+(NL_3+\vec h_0)$\\
				& & & with spinor norm $5(\Q_2^\times)^2$ & with spinor norm $3(\Q_3^\times)^2$\\\hline
				
				$(1,3,3)^T$ & $6$ & $(0,0,1)^T$ & $\tau_{\vec e_1} \tau_{\vec e_1+2\vec e_2} $ & $\tau_{\vec e_1}\tau_{\vec e_2}$\\\hline 
				
				$(1,3,9)^T$ & $4$ & $(1,1,0)^T$ & $\tau_{\vec e_3} \tau_{3\vec e_1 - \vec e_2-\vec e_3}$ & $\tau_{\vec e_1}\tau_{\vec e_2}$\\\hline 
				
				$(1,3,9)^T$ & $6$ & $(0,0,1)^T$ & $\tau_{\vec e_1} \tau_{\vec e_1+2\vec e_3} $ & $\tau_{\vec e_1}\tau_{\vec e_2}$ \\\hline 
				
				$(1,3,12)^T$ & $6$ & $(3,1,0)^T$ & $\tau_{\vec e_1} \tau_{\vec e_1+ 2\vec e_2 + 2\vec e_3} $ & $\tau_{\vec e_1}\tau_{\vec e_3}$ \\\hline 
				
				$(3,4,12)^T$ & $6$ & $(0,0,1)^T$ & $\tau_{\vec e_1} \tau_{\vec e_1+\vec e_2} $ & $\tau_{\vec e_1}\tau_{\vec e_2}$\\\hline 
				
				$(3,4,12)^T$ & $12$ & $(2,3,0)^T$ & $\tau_{\vec e_3} \tau_{\vec e_1+\vec e_3} $ & $\tau_{\vec e_2}\tau_{\vec e_3}$\\\hline 
				
				$(3,4,36)^T$ & $4$ & $(2,1,0)^T$ & $\tau_{\vec e_1} \tau_{\vec e_1+\vec e_2} $ & $\tau_{\vec e_1}\tau_{\vec e_2}$\\\hline 
			\end{tabular}
			
			\caption{Rotations in $O^+(NL_p + \vec h_0)$ with desired spinor norm.}
			\label{table::2cls_in_genus__spn_info_lowerbnd}	
		\end{table}
		
		Table \ref{table::2cls_in_genus__spn_info_lowerbnd} shows that $5(\Q_2^\times)^2 \in \theta(O^+(NL_2+\vec h_0))$ and $3(\Q_3^\times)^2 \in \theta(O^+(NL_3+\vec h_0))$ for all 7 of the remaining genera under consideration, and so we are done.
	\end{proof}
	
	Thus, we have shown that 40 of the 85 identities given in Appendix \ref{sctn::list_spn1_identities} indeed come from spinor class number 1 lattice cosets, as expected. Let us now consider the remaining identities. While finding the number of spinor genera follows in a similar vein as the proof of Proposition \ref{prop::2cls_in_genus}, a little more work needs to be done in order to classify the spinor genera.
	
	\begin{proposition}\label{prop::3+cls_in_genus_spnnum=1}
		For all 16 values of $\vec a$, $N$, and $\vec h_0$ given below, the lattice coset $NL_{\vec a}+\vec h_0$ has spinor class number 1. 
	\end{proposition}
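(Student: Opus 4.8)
The plan is to follow the template of the proof of Proposition~\ref{prop::2cls_in_genus}, the only new feature being that each of these genera now contains $m\ge 3$ classes, so that to conclude spinor class number $1$ we must produce $m$ distinct spinor genera rather than just two. By Proposition~\ref{prop::classify_genus_all_listed} together with Appendix~\ref{sctn::gen_info}, for each of the $16$ triples $(\vec a, N, \vec h_0)$ we already have a complete set $\mc G = \{NL+\vec h_0, NL+\vec h_1, \dots, NL+\vec h_{m-1}\}$ of pairwise distinct class representatives of $\gen^+(NL+\vec h_0)$, the distinctness coming from Lemma~\ref{lem::characterize_O(L)}. Since every spinor genus in a genus is non-empty, it suffices to show that the number of spinor genera in $\gen^+(NL+\vec h_0)$ is exactly $m$; then each spinor genus contains exactly one class, which is the assertion.

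To count the spinor genera I would evaluate the index in equation~(\ref{eqn::num_spn_in_gen}), which requires $\theta(O^+(NL_p+\vec h_0))$ for every prime $p$. For an odd prime $p$ dividing neither $N$ nor $dL$, Corollary~\ref{cor::spnmap_img_of_modular_lattice} gives $\theta(O^+(NL_p+\vec h_0)) = \theta(O^+(L_p)) = \Z_p^\times(\Q_p^\times)^2$. For $p\mid dL$ but $p\nmid N$ we again have $NL_p+\vec h_0 = L_p$, and the algorithm of Lemma~\ref{lem::conway_algorithm} computes $\theta(O^+(L_p))$; these images are tabulated in Table~\ref{table::spn_info_upperbnd} of Appendix~\ref{sctn::upperbnd_img_spn_norm}. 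The genuinely new work concerns the primes $p\mid N$. For an \emph{upper} bound I would set $M := \Z\vec h_0 + NL$, use $O^+(NL_p+\vec h_0)\subseteq O^+(M_p)\cap O^+(L_p)$, and compute $\theta(O^+(M_p))\cap\theta(O^+(L_p))$ via Lemma~\ref{lem::conway_algorithm} (again recorded in Appendix~\ref{sctn::upperbnd_img_spn_norm}); for a \emph{lower} bound I would exhibit, for each square class that should lie in the image, an explicit rotation $\tau_{\vec u}\tau_{\vec w}\in O^+(NL_p+\vec h_0)$ with $\theta(\tau_{\vec u}\tau_{\vec w}) = Q(\vec u)Q(\vec w)(\Q_p^\times)^2$ representing that class, exactly as in Table~\ref{table::2cls_in_genus__spn_info_lowerbnd}. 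Matching the two bounds determines $\theta(O^+(NL_p+\vec h_0))$ completely.

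With the local images known, I would compute $[J_\Q : \Q^\times\prod_p\theta(O^+(NL_p+\vec h_0))]$; this number is necessarily a power of $2$, so for $m\ge 3$ one expects $m=4$ (or possibly a higher $2$-power). When a single prime $q$ carries the obstruction --- for instance $q=2$ with $N$ and $dL$ both powers of $2$ --- Lemma~\ref{lem::numspn_theta_triv_p_ne_q} applies and the count is $[\Z_q^\times(\Q_q^\times)^2 : \theta(O^+(NL_q+\vec h_0))\cap\Z_q^\times(\Q_q^\times)^2]$, which for $q=2$ can equal $4$. When a second prime (typically $3$) also contributes, Lemma~\ref{lem::numspn=2_ind2subgrp_p=2_nonon_trivial_unit_squares_p=q} with $q=2$, $q'=3$, or a direct \idele-class-group computation patterned on the proofs of those lemmas, supplies the count. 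In each of the $16$ cases this index is checked to equal $m$, which forces spinor class number $1$.

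The main obstacle is the $2$-adic spinor norm at the primes dividing $N$: as noted in the introduction, no clean algorithm computes $\theta(O^+(L_p+\vec v))$ when $p$ divides the conductor of $\vec v$, so the upper bound coming from $M_p$ and the lower bound coming from explicitly constructed products of symmetries preserving the coset must be forced to coincide by a case-by-case hunt for suitable symmetries, and the real danger is a residual gap at $p=2$. A lesser difficulty is that when $N$ or $dL$ has more than one prime factor the two counting lemmas do not apply verbatim, and the index in~(\ref{eqn::num_spn_in_gen}) must be re-derived directly in the \idele\ class group, as in the proofs of Lemmas~\ref{lem::numspn_theta_triv_p_ne_q} and~\ref{lem::numspn=2_ind2subgrp_p=2_nonon_trivial_unit_squares_p=q}.
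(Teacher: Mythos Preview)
Your strategy has a fatal gap. You propose to show that the number of spinor genera equals the class number $m$, but you yourself observe that the index in equation~(\ref{eqn::num_spn_in_gen}) is always a power of $2$. Inspecting Table~\ref{table::genus_info} shows that for $13$ of the $16$ triples the genus has exactly $3$ classes (e.g.\ $\vec a=(1,1,2)^T$, $N=8$, $\vec h_0=(0,4,1)^T$, or $\vec a=(1,8,8)^T$, $N=2$, $\vec h_0=(1,0,1)^T$), so ``$m$ spinor genera'' is impossible there. Even in the three cases with $m=4$, the spinor-norm upper bound from Table~\ref{table::spn_info_upperbnd} only forces $\theta(O^+(NL_2+\vec h_0))\cap\Z_2^\times(\Q_2^\times)^2$ to have index at least $2$, not $4$, and in fact the paper shows there are exactly two spinor genera in every case, not $m$.

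The paper's argument is structurally different and supplies the missing ingredient: rather than separating all classes, it exhibits explicit elements $\Sigma\in O_\A'(V)$ (listed in Table~\ref{table::spnorbit}) which map the cosets $NL+\vec h_1,\dots,NL+\vec h_{m-1}$ onto one another, proving these $m-1$ classes lie in a \emph{single} spinor genus. This caps the number of spinor genera at $2$. The spinor-norm computation (which you correctly outline) then shows there are at least $2$, hence exactly $2$, and $NL+\vec h_0$ is forced to sit alone in its spinor genus. The step you are missing is this explicit construction of adelic rotations with trivial spinor norm linking $\vec h_1,\dots,\vec h_{m-1}$; without it, no amount of refinement of the local spinor-norm images can give the conclusion when $m$ is not a $2$-power.
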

	\begin{footnotesize}
		\begin{center}
			\begin{tabular}{|c||c|c|c|c|c|c|c|c|}
				\hline 
				$\vec a$ & $(1,1,1)^T$ & $ (1,1,1)^T $ & $(1,1,2)^T$ & $(1,1,2)^T$ & $(1,1,4)^T$ & $(1,1,8)^T$ & $(1,2,2)^T$ & $(1,2,2)^T$\\
				$N$ & 8 & 8 & 8 & 8 & 4 & 4 & 4 & 8 \\
				$\vec h_0$ & $(1,2,2)^T$ & $(3,2,2)^T$ & $(0,4,1)^T$ & $(0,4,3)^T$ & $(1,2,1)^T$ & $(1,1,1)^T$ & $(1,0,2)^T$ & $(4,1,3)^T$\\\hline \hline 
				$\vec a$ & $(1,3,12)^T$ & $(1,4,4)^T$ & $(1,4,4)^T$ & $(1,4,4)^T$ & $(1,6,6)^T$ & $(1,8,8)^T$ & $(1,8,8)^T$ & $(1,12,12)^T$ \\
				$N$ & 12 & 4 & 8 & 8 & 12 & 2 & 8 & 12\\
				$\vec h_0$ & $(0,4,3)^T$ & $(1,1,1)^T$ & $(4,1,2)^T$ & $(4,3,2)^T$ & $(0,1,5)^T$ & $(1,0,1)^T$ & $(0,1,3)^T$ & $(6,3,4)^T$\\\hline
			\end{tabular}
		\end{center}
	\end{footnotesize}
	\begin{proof}
		Proposition \ref{prop::classify_genus_all_listed} and Table \ref{table::genus_info} in Appendix \ref{sctn::gen_info} gives a complete genus classification of the genus of $NL+\vec h_0$. Let $\mc G = \{\vec h_0, ..., \vec h_k\}$ be the complete set of distinct class representatives of $\gen^+(NL_{\vec a} + \vec h_0)$ established by Proposition \ref{prop::classify_genus_all_listed}. In Table \ref{table::spnorbit} given in Appendix \ref{sctn::grouping_spn_gen}, for all 16 values of $\vec a, N, \vec h_0$ given above, an element $\Sigma \in O_\A'(V)$ is given which takes some $NL+\vec h_i$ to some $NL+\vec h_j$, where $1\le i, j\le k$. It can easily be checked that sufficiently many $\Sigma$ have been listed to establish that $NL+\vec h_i$ are all in the same $O_\A'(V)$-orbit. This implies that $NL+\vec h_i$ for $i=1,2,...,k$ all belong to the same spinor genus. In particular, it follows that there is exactly one spinor genus in the genus if $NL+\vec h_0$ belongs to the same spinor genus as $NL+\vec h_1$, and otherwise there are exactly two spinor genera with (the class of) $NL+\vec h_0$ lying alone in its spinor genus. Thus, if we can show there are (at least) two spinor genera in the genus, then $NL+\vec h_0$ lies alone in its spinor genus, and we are done.
		
		For all $\vec a, \vec h_0, N$ given in the above table for which $dL = a_1a_2a_3$ and $N$ are both powers of 2, it follows from Corollary \ref{cor::spnmap_img_of_modular_lattice} that $\theta(O^+(NL_p+\vec h_0)) = \theta(O^+(L_p)) = \Z_p^\times (\Q_p^\times)^2$ for all odd primes $p$. Thus, for these values of $\vec a, \vec h_0$, and $N$, there are exactly \[[\Z_2^\times (\Q_2^\times)^2 : \theta(O^+(NL_2+\vec h_0)) \cap \Z_2^\times(\Q_2^\times)^2]\]
		number of spinor genera in the genus, by Lemma \ref{lem::numspn_theta_triv_p_ne_q}. However, $\theta(O^+(NL_2+\vec h_0)\cap \Z_2^\times(\Q_2^\times)^2$ is contained in $\theta(O^+(L_2)) \cap \theta(O^+(M_2))$ (where $M:=NL+\Z \vec h_0$), the latter of which is a subgroup of $\Z_2^\times (\Q_2^\times)^2$ of index 2 by Table \ref{table::spn_info_upperbnd} in Appendix \ref{sctn::upperbnd_img_spn_norm}. Thus $\theta(O^+(NL_2+\vec h_0)) \cap \Z_2^\times(\Q_2^\times)^2$ has group index at least 2 in $\Z_2^\times (\Q_2^\times)^2$. Hence there are at least two spinor genera in the genus, from which the result follows.
		
		As usual $\theta(O^+(12L_p+\vec h_0)) = \Z_p^\times (\Q_p^\times)^2$ for all $p\ne 2,3$. It remains to check the remaining three values of $\vec a, N, \vec h_0$. We have $N=12$, and  \[(\vec a, \vec h_0)\in \left\{ \left( (1,3,12)^T, (0,4,3)^T \right), \left( (1,6,6)^T, (0,1,5)^T \right), \left((1,12,12)^T, (6,3,4)^T\right) \right\}.\]
		In each of the three cases, we see from Table \ref{table::spn_info_upperbnd} in Appendix \ref{sctn::upperbnd_img_spn_norm} that $\theta(O^+(12L_2+ \vec h_0)) \subseteq \Z_2^\times (\Q_2^\times)^2$, and $\theta(O^+(12L_3+ \vec h)) \subseteq \{1,3\}(\Q_3^\times)^2$. In particular, the latter implies that $\Z_3^\times(\Q_3^\times)^2 \cap \theta(O^+(12L_3+ \vec h))$ is trivial, and so has index 2 in $\Z_3^\times(\Q_3^\times)^2$. Thus, if we can show that $\Z_2^\times (\Q_2^\times)^2\subseteq \theta(O^+(12L_2+ \vec h_0))$, then we would have  $\theta(O^+(12L_2+ \vec h_0)) = \Z_2^\times (\Q_2^\times)^2$, and so Lemma \ref{lem::numspn_theta_triv_p_ne_q} would imply that there are exactly two spinor genera in the genus. 
		
		\begin{table}[H]
			\centering
			\caption{Rotations in $O^+(12L_2+\vec h_0)$ with desired spinor norm.} 
			\label{table::3+cls_in_genus__spn_info_lowerbnd}
			\begin{tabular}{|c|c|c|c|}
				\hline 
				\multirow{2}{*}{$\vec a$} & \multirow{2}{*}{$\vec h_0$} & Rotation in $O^+(12L_2+\vec h_0)$ & Rotation in $O^+(12L_3+\vec h_0)$\\
				& & with spinor norm $3(\Q_2^\times)^2$ & with spinor norm $5(\Q_2^\times)^2$\\\hline
				$(1,3,12)^T$ & $(0,4,3)^T$ & $\tau_{\vec e_1}\tau_{\vec e_2}$ & $\tau_{\vec e_2}\tau_{2\vec e_1+\vec e_2}$ \\
				$(1,6,6)^T$ & $(0,1,5)^T$ & $\tau_{\vec e_1 + \vec e_2} \tau_{\vec e_1+\vec e_2+\vec e_3}$ & $\tau_{\vec e_1} \tau_{\vec e_1+ \vec e_2+ \vec e_3}$\\
				$(1,12,12)^T$ & $(6,3,4)^T$ & $\tau_{\vec e_1}\tau_{\vec e_3}$ & $\tau_{\vec e_1} \tau_{2\vec e_2+ \vec e_3} $\\\hline 
			\end{tabular}
		\end{table}
		Notice that Table \ref{table::3+cls_in_genus__spn_info_lowerbnd} implies that $3(\Q_2^\times)^2, 5(\Q_2^\times)^2 \in \theta(O^+(12L_2+ \vec h))$. Since $3(\Q_2^\times)^2$ and $5(\Q_2^\times)^2$ generate $\Z_2^\times (\Q_2^\times)^2$, it follows that $\Z_2^\times (\Q_2^\times)^2\subseteq \theta(O^+(12L_2+ \vec h_0))$, as required. 
	\end{proof}

	We have thus explained 56 of the 85 pseudo-spinor class 1 identities as coming from lattice cosets with spinor class number 1. However, this need not be the case in general, as is shown by the next two propositions.

	\begin{proposition}\label{prop::spnnum=2}
		For $\vec a = (1,1,2)^T$, $N=8$, $\vec h_0= (1,3,2)^T$, and for $\vec a = (1,3,3)^T$, $N=12$, $\vec h_0 = (3,3,4)^T$, the lattice coset $NL_{\vec a} + \vec h_0$ has spinor class number 2.
	\end{proposition}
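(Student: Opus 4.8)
The plan is to follow the template of the proofs of Propositions \ref{prop::2cls_in_genus} and \ref{prop::3+cls_in_genus_spnnum=1}: first classify the genus, then count the spinor genera, and finally distribute the classes of the genus among the spinor genera. The new feature here is that the spinor genus containing $NL_{\vec a}+\vec h_0$ will contain two classes rather than one.

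First I would invoke Proposition \ref{prop::classify_genus_all_listed} together with Table \ref{table::genus_info} in Appendix \ref{sctn::gen_info} to obtain the complete set $\mc G=\{NL+\vec h_0,NL+\vec h_1,\dots,NL+\vec h_k\}$ of distinct class representatives of $\gen^+(NL_{\vec a}+\vec h_0)$. Then I would determine the number of spinor genera via equation (\ref{eqn::num_spn_in_gen}), which requires computing $\theta(O^+(NL_p+\vec h_0))$ at every prime $p$. For $p\nmid N$ one has $NL_p+\vec h_0=L_p$, so the image is supplied by Corollary \ref{cor::spnmap_img_of_modular_lattice} (when $p\nmid dL$) or by the algorithm of Lemma \ref{lem::conway_algorithm} (when $p\mid dL$), and these are recorded in Table \ref{table::spn_info_upperbnd}. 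For the primes $p\mid N$ (that is, $p=2$ for $\vec a=(1,1,2)^T$, $N=8$, and $p\in\{2,3\}$ for $\vec a=(1,3,3)^T$, $N=12$) no closed formula is available, so I would sandwich $\theta(O^+(NL_p+\vec h_0))$ between the upper bound $\theta(O^+(NL_p+\vec h_0))\subseteq\theta(O^+(M_p))\cap\theta(O^+(L_p))$ coming from the lattice $M=NL+\Z\vec h_0$ (computed again by Lemma \ref{lem::conway_algorithm}, Table \ref{table::spn_info_upperbnd}) and a lower bound obtained by exhibiting explicit rotations, written as products of two symmetries $\tau_{\vec u}\tau_{\vec w}$, that lie in $O^+(NL_p+\vec h_0)$ and have a prescribed square class as their spinor norm, exactly as was done in the proofs of Propositions \ref{prop::2cls_in_genus} and \ref{prop::3+cls_in_genus_spnnum=1}. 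Feeding the resulting images into Lemma \ref{lem::numspn_theta_triv_p_ne_q} (with $q=2$, for the first coset, where both $N$ and $dL$ are powers of $2$) or into Lemma \ref{lem::numspn=2_ind2subgrp_p=2_nonon_trivial_unit_squares_p=q} (with $q=2$ and $q'=3$, for the second coset) should show that the genus contains exactly two spinor genera.

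To conclude, I would partition the classes of $\mc G$ among the two spinor genera, using Proposition \ref{prop::matrix_reduction_of_isometries} to produce the required group elements as in Table \ref{table::spnorbit} of Appendix \ref{sctn::grouping_spn_gen}: that is, I would exhibit $p$-adic rotations of trivial spinor norm (hence elements of $O'_{\A}(V)$) carrying $NL+\vec h_0$ to exactly one further representative $NL+\vec h_1$, together with further such elements linking the remaining representatives $NL+\vec h_2,\dots,NL+\vec h_k$ to one another. Since there are exactly two spinor genera, the sets $\{NL+\vec h_0,NL+\vec h_1\}$ and $\{NL+\vec h_2,\dots,NL+\vec h_k\}$ cannot lie in a common spinor genus, so they are precisely the two spinor genera; in particular the spinor genus of $NL_{\vec a}+\vec h_0$ consists of exactly the two classes of $NL+\vec h_0$ and $NL+\vec h_1$, so the spinor class number equals $2$. (Should the previous step instead yield only one spinor genus for one of the cosets, the genus would then have to contain exactly two classes, and the conclusion would again be immediate.)

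The step I expect to be the main obstacle is the computation of $\theta(O^+(NL_p+\vec h_0))$ at the primes $p\mid N$, and above all at $p=2$: there is no algorithm for this image in the lattice-coset setting, so the upper bound coming from $M_p$ and the lower bound coming from an ad hoc choice of symmetries $\tau_{\vec u}\tau_{\vec w}$ must be forced to coincide, and finding symmetries that simultaneously preserve the lattice coset and hit the required square class (here $5(\Q_2^\times)^2$ at $p=2$, or $3(\Q_3^\times)^2$ at $p=3$) is the genuinely case-by-case part. A secondary difficulty is exhibiting the elements of $O'_{\A}(V)$ used in the final step; by Proposition \ref{prop::matrix_reduction_of_isometries} this too reduces to locating matrices in $G_{L,q',q}$ of trivial spinor norm having the prescribed action on $\vec h_0$ modulo $q$.
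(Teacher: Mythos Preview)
Your plan is essentially the paper's proof: invoke Proposition \ref{prop::classify_genus_all_listed} for the genus, bound $\theta(O^+(NL_p+\vec h_0))$ above via $M_p$ and below via explicit products of symmetries, apply Lemma \ref{lem::numspn_theta_triv_p_ne_q} (first coset) or Lemma \ref{lem::numspn=2_ind2subgrp_p=2_nonon_trivial_unit_squares_p=q} (second coset), and then use the adelic rotations of Table \ref{table::spnorbit} to group the classes into two spinor genera.

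Two small points where the paper is slightly more economical. First, the paper reverses your last two steps: it exhibits the $O^+(V)O'_\A(V)$-orbits \emph{before} counting spinor genera, so that for $\vec a=(1,1,2)^T$ only the upper bound $\theta(O^+(8L_2+\vec h_0))\subseteq\{1,5\}(\Q_2^\times)^2$ is needed (giving \emph{at least} two spinor genera), and the partition into two orbits forces \emph{exactly} two; no lower-bound rotation has to be produced in this case. Second, for $\vec a=(1,3,3)^T$ the paper computes the explicit symmetries not at $\vec h_0$ but at $\vec h_2$ (permissible since the number of spinor genera is a genus invariant), where convenient witnesses $\tau_{\vec e_2}\tau_{3\vec e_1+\vec e_2+\vec e_3}$ and $\tau_{\vec e_1+\vec e_3}\tau_{\vec e_3}$ are available. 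Finally, note that the orbit step uses elements of $O^+(V)O'_\A(V)$, not $O'_\A(V)$ alone: in Table \ref{table::spnorbit} a nontrivial global $\sigma\in O^+(V)$ appears for the $(1,1,2)^T$ case.
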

	\begin{proof}
		As before, Proposition \ref{prop::classify_genus_all_listed} gives the genus classification of $NL+\vec h_0$, and from Table \ref{table::genus_info} in Appendix \ref{sctn::gen_info}, we know that $\mc G = \{NL+\vec h_0, ..., NL+\vec h_k\}$ (where $k=3$ in the case of $\vec a = (1,3,3)^T$, and $k=5$ for $\vec a = (1,1,2)^T$) is a complete set of distinct class representatives of $\gen^+(NL+\vec h_0)$. Moreover, notice that $\vec h_1 \equiv -\vec h_0 \modc{N}$. As in the previous proof, Table \ref{table::spnorbit} in Appendix \ref{sctn::grouping_spn_gen} gives explicit values of $\sigma \Sigma \in O^+(V)O_\A'(V)$ mapping $NL+\vec h_0$ to $NL+\vec h_1$, and mapping $NL+\vec h_i$ to $NL+\vec h_j$ where $2\le i, j\le k$. It follows that the (proper) classes of $NL+\vec h_0$ and $NL+\vec h_1$ belong to the same spinor genus, and the classes of $NL+\vec h_2$, .., $NL+\vec h_k$ belong to the same spinor genus. Thus, as before, we only need to show that there are at least two spinor genera in the genus. In the case of $\vec a = (1,1,2)^T$ and $N=8$, notice that $\theta(O^+(8L_p + \vec h_0)) = \Z_p^\times(\Q_p^\times)^2$ for all $p\ne 2$ by Corollary \ref{cor::spnmap_img_of_modular_lattice}, while $\theta(O^+(8L_2+\vec h_0)) \subseteq \theta(O^+(8L_2+\Z_2 \vec h_0)) = \{1,5\}(\Q_2^\times)^2$ (by Table \ref{table::spn_info_upperbnd} in Appendix \ref{sctn::upperbnd_img_spn_norm}). Lemma \ref{lem::numspn_theta_triv_p_ne_q} then implies that there are at least 2 spinor genera, and so we are done in this case. On the other hand, for $\vec a = (1,3,3)^T$, Table \ref{table::spn_info_upperbnd} again tells us that $$\theta(O^+(12L_2+\vec h_0))\subseteq \Z_2^\times (\Q_2^\times)^2 \quad\text{ and }\quad \theta(O^+(12L_3+\vec h_0)) \subseteq \{1,3\}(\Q_3^\times)^2$$
		However, notice that $\tau_{\vec e_2} \tau_{3\vec e_1+\vec e_2+ \vec e_3} \in O(12L_2 + \vec h_2)$ has 2-adic spinor norm $5(\Q_2^\times)^2$, while $\tau_{\vec e_1 + \vec e_3}\tau_{\vec e_3} \in O^+(12L_3 + \vec h_2)$ has 3-adic spinor norm $3(\Q_3^\times)^2$. It follows that $\theta(O^+(12L_3 +  \vec h_2)) = \{1,3\}(\Q_3^\times)^2$, while $\theta(O^+(12L_2 + \vec h_2))$ is a non-trivial subgroup of $\Z_2^\times (\Q_2^\times)^2$ containing $5 (\Q_2^\times)^2$. Since $\theta(O^+(12L_p+\vec h_0)) = \Z_p^\times (\Q_p^\times )^2$ for all $p\ne 2,3$ anyway, all conditions of Lemma \ref{lem::numspn=2_ind2subgrp_p=2_nonon_trivial_unit_squares_p=q} hold, and so there are exactly two spinor genera in this genus as well. 
	\end{proof}

	Notice that the two lattice cosets $NL+\vec h_0$ given above have proper spinor class number 2 and yet satisfy a pseudo-spinor class 1 identity (listed in Appendix \ref{sctn::list_spn1_identities}). This is because the other (proper) class in its spinor genus is $NL+\vec h_1 = NL - \vec h_0$ (since $\vec h_1 \equiv -\vec h_0\modc{N}$), and so the theta series of $NL+\vec h_0$ and $NL+\vec h_1$ are the same. Since the average over the (proper) spinor genus is expected to satisfy a pseudo-spinor class 1 identity, it follows that $\Theta_{\vec a, \vec h_0, N}$ would also satisfy such an identity. Thus, it is perhaps not surprising that these lattice cosets satisfy pseudo-spinor class 1 identities while having spinor class number at least 2. The next proposition gives three, more interesting, examples.

	\begin{proposition}\label{prop::spnnum=3}
		The lattice coset $NL_{\vec a} + \vec h_0$ has spinor class number 3 for 
		\[(\vec a, \vec h, N) \in \left\{\left((1,2,4)^T, (0,1,2)^T, 8\right), \left((1,2,4)^T, (0,3,2)^T, 8\right), \left((1,2,8)^T, (1,0,1)^T, 4\right) \right\}.\]
	\end{proposition}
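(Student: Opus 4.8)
The proof will follow the template of Propositions~\ref{prop::2cls_in_genus}, \ref{prop::3+cls_in_genus_spnnum=1}, and~\ref{prop::spnnum=2}. For each of the three cosets, Proposition~\ref{prop::classify_genus_all_listed} together with Table~\ref{table::genus_info} in Appendix~\ref{sctn::gen_info} supplies a complete list $\mc G = \{NL+\vec h_0, NL+\vec h_1, \dots, NL+\vec h_k\}$ of distinct class representatives of $\gen^+(NL_{\vec a}+\vec h_0)$. In all three cases both $N$ and $dL = a_1a_2a_3$ are powers of $2$, so by Corollary~\ref{cor::spnmap_img_of_modular_lattice} we have $\theta(O^+(NL_p+\vec h_0)) = \Z_p^\times(\Q_p^\times)^2$ for every odd prime $p$, and only the prime $q = 2$ requires attention.

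Next I would read off from Table~\ref{table::spnorbit} in Appendix~\ref{sctn::grouping_spn_gen} explicit elements $\sigma\Sigma \in O^+(V)O'_\A(V)$ showing, on the one hand, that $NL+\vec h_0$, $NL+\vec h_1$, $NL+\vec h_2$ all lie in a single orbit of $O^+(V)O'_\A(V)$ (hence in one spinor genus), and on the other hand that $NL+\vec h_3, \dots, NL+\vec h_k$ all lie in a single such orbit. Consequently the genus breaks up into at most two spinor genera, and the spinor genus of $NL+\vec h_0$ contains at least three classes. If one also knows that there are exactly two spinor genera in the genus, then $\{NL+\vec h_0, NL+\vec h_1, NL+\vec h_2\}$ and $\{NL+\vec h_3, \dots, NL+\vec h_k\}$ must be precisely those two spinor genera (were they one, there would be only a single spinor genus), whence the spinor genus of $NL+\vec h_0$ consists of exactly three classes and the proposition follows.

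It therefore remains to show, via the group index in equation~(\ref{eqn::num_spn_in_gen}), that each genus contains exactly two spinor genera. For the upper bound on $\theta(O^+(NL_2+\vec h_0))$ I would use $O^+(NL_2+\vec h_0) \subseteq O^+(L_2)\cap O^+(M_2)$ with $M := NL+\Z\vec h_0$, and the values of $\theta(O^+(L_2))$ and $\theta(O^+(M_2))$ recorded in Table~\ref{table::spn_info_upperbnd} in Appendix~\ref{sctn::upperbnd_img_spn_norm} (these being computed by the Conway algorithm of Lemma~\ref{lem::conway_algorithm}); in each case their intersection is a subgroup of $\Z_2^\times(\Q_2^\times)^2$ of index $2$, which also yields the hypothesis on $\theta(O^+(NL_2+\vec h_0))$ needed to invoke Lemma~\ref{lem::numspn_theta_triv_p_ne_q} with $q=2$. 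For the matching lower bound I would exhibit, for each coset, an explicit product $\tau_{\vec u}\tau_{\vec w}$ of two symmetries in $O(V_2)$ preserving $NL_2+\vec h_0$ whose spinor norm represents the nontrivial class of that index-$2$ subgroup (for instance $5(\Q_2^\times)^2$ or $3(\Q_2^\times)^2$). Then $\theta(O^+(NL_2+\vec h_0)) \cap \Z_2^\times(\Q_2^\times)^2$ is exactly that order-$2$ group, so Lemma~\ref{lem::numspn_theta_triv_p_ne_q} gives two spinor genera, completing the argument.

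The main obstacle, as flagged in the introduction, is the absence of a direct algorithm for the $2$-adic spinor-norm image of a lattice coset: the upper bound must be extracted from the overlattice $M_2 = \Z_2\vec h_0 + NL_2$, to which Conway's algorithm applies, and one then has to produce a concrete symmetry product preserving $NL_2+\vec h_0$ that attains this bound, with no a priori guarantee that the bound is sharp, so this search is the crux. Two secondary points of care are: verifying that the elements $\sigma\Sigma$ of Table~\ref{table::spnorbit} genuinely lie in $O^+(V)O'_\A(V)$ (their local components must have trivial spinor norm), and checking that the symmetry products furnishing the lower bound preserve the coset $NL_2+\vec h_0$ itself rather than merely the overlattice $M_2$.
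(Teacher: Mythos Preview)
Your overall strategy matches the paper's, but you introduce an unnecessary complication in the last step. You already observe that the orbit data from Table~\ref{table::spnorbit} forces the genus to split into \emph{at most} two spinor genera; combined with a proof that there are \emph{at least} two spinor genera, this immediately gives exactly two, and then $\spn^+(NL+\vec h_0)$ must consist of precisely the three classes $\cls^+(NL+\vec h_i)$, $i=0,1,2$. The paper obtains ``at least two'' from the upper bound alone: since $\theta(O^+(NL_2+\vec h_0)) \subseteq \theta(O^+(M_2)) = \{1,2,5,10\}(\Q_2^\times)^2$ (Table~\ref{table::spn_info_upperbnd}), the intersection $\theta(O^+(NL_2+\vec h_0)) \cap \Z_2^\times(\Q_2^\times)^2$ lies in $\{1,5\}(\Q_2^\times)^2$ and so has index at least $2$ in $\Z_2^\times(\Q_2^\times)^2$; Lemma~\ref{lem::numspn_theta_triv_p_ne_q} with $q=2$ then yields at least two spinor genera, and the proof stops there.

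Your proposed ``matching lower bound'' --- exhibiting an explicit symmetry product in $O^+(NL_2+\vec h_0)$ realising a nontrivial spinor norm, so as to pin down $\theta(O^+(NL_2+\vec h_0))$ exactly --- is therefore superfluous, and you yourself flag it as the crux with ``no a priori guarantee that the bound is sharp.'' This is precisely the difficulty the paper's argument sidesteps: because the orbit structure already caps the number of spinor genera at two, one never needs the exact $2$-adic spinor-norm image, only the cheap containment. (One small correction: in these three cases $\theta(O^+(M_2)) \cap \theta(O^+(L_2)) = \{1,2,5,10\}(\Q_2^\times)^2$, which is not itself a subgroup of $\Z_2^\times(\Q_2^\times)^2$; it is the further intersection with $\Z_2^\times(\Q_2^\times)^2$, namely $\{1,5\}(\Q_2^\times)^2$, that has index $2$.)
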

	\begin{proof}
		By Proposition \ref{prop::classify_genus_all_listed}, the genus of $NL+\vec h_0$ contains six classes of lattice cosets. A complete set of distinct class representatives $\mc G = \{NL+\vec h_0, ..., NL+\vec h_5\}$ is given in Table \ref{table::genus_info} in Appendix \ref{sctn::gen_info}. Moreover, Table \ref{table::spnorbit} in Appendix \ref{sctn::grouping_spn_gen} gives elements of $O_\A'(V)$ that map $\{NL+\vec h_0, NL+\vec h_1, NL+\vec h_2\}$ onto itself, and $\{NL+\vec h_3, NL+\vec h_4, NL+\vec h_5\}$ onto itself. This implies $NL+\vec h_1, NL+\vec h_2 \in \spn^+(NL+\vec h_0)$, and $NL+\vec h_4, NL+\vec h_5\in \spn^+(NL+\vec h_3)$. Hence, if there are (at least) two spinor genera in the genus, it would follow that \[\spn^+(NL+\vec h_0) = \cls^+(NL+\vec h_0) \sqcup \cls^+(NL+\vec h_1) \sqcup \cls^+(NL+\vec h_2)\]
		and so $NL+\vec h_0$ would have spinor class number 3. However, notice that $dL=a_1a_2a_3$ and $N$ are both powers of two, and so Corollary \ref{cor::spnmap_img_of_modular_lattice} implies that $\theta(O^+(NL_p+\vec h_0)) = \Z_p^\times (\Q_p^\times)^2$ for odd $p$. Moreover, from Table \ref{table::spn_info_upperbnd} in Appendix \ref{sctn::upperbnd_img_spn_norm} we have \[\theta(O^+(NL_2 + \vec h_0)) \subseteq \theta(O^+(NL_2+ \Z_2\vec h_0)) = \{1,2,5,10\}(\Q_2^\times)^2\]
		and so $\Z_2^\times (\Q_2^\times)^2 \cap \theta(O^+(NL_2 + \vec h_0))$ has group index at least two in $\Z_2^\times (\Q_2^\times)^2$. Lemma \ref{lem::numspn_theta_triv_p_ne_q} then implies that there are at least two spinor genera in the genus, and so we are done. 
	\end{proof}

	Even though the three lattice cosets given above have spinor class number 3, they satisfy pseudo-spinor class 1 identities, given in Table \ref{table::list_spn1_identities}. However, unlike the two lattice cosets in Proposition \ref{prop::spnnum=2}, the pseudo-spinor class 1 identities cannot be explained as easily. By checking sufficiently many coefficients (upto the valence bound; see Lemma \ref{lem::valence_formula}), the following identity can be established
	\begin{equation}\label{eqn::spnnum=3_avg_of_other_two}
		\Theta_{\vec a, \vec h_0, N} = \frac12 \left(\Theta_{\vec a, \vec h_1, N} + \Theta_{\vec a, \vec h_2, N} \right) \qquad \left( \vec a, \vec h_0, N \text{ as in Proposition \ref{prop::spnnum=3}} \right).
	\end{equation}
	This identity then implies that the average over the spinor genus of $NL+\vec h_0$ is also equal to $\Theta_{\vec a, \vec h_0, N}$, and so by the conjecture given in equation (\ref{eqn::conjecture_avg_over_spn}), $\Theta_{\vec a, \vec h_0, N}$ should satisfy a pseudo-spinor class 1 identity. While equation (\ref{eqn::spnnum=3_avg_of_other_two}) does explain why these cosets satisfy pseudo-spinor class 1 identities, there is no obvious explanation as to why equation (\ref{eqn::spnnum=3_avg_of_other_two}) must hold in the first place. It is possible that there is some (easily generalizable) elementary correspondence between solutions $\vec x\in \Z^3$ of $Q_{\vec a}(\vec x) = n$ satisfying $x\equiv \vec h_0\modc{N}$, and the solutions $\vec x\in \Z^3$ of $Q_{\vec a}(\vec x) = n$ satisfying $x\equiv \vec h_1\modc{N}$ or $x\equiv \vec h_2 \modc{N}$. It is also possible that this phenomenon hints to some as yet unknown deeper structure in the genus of a lattice coset---perhaps, there is some (as yet unknown) classification of lattice cosets intermediate to the spinor genus and the class.
	
	Notice that Propositions \ref{prop::2cls_in_genus} to \ref{prop::spnnum=3} classify 61 of the 85 lattice cosets listed in Table \ref{table::list_spn1_identities} in Appendix \ref{sctn::list_spn1_identities}. Proposition \ref{prop::classify_cond2_(1,3,9)} in the next section classifies the spinor genus for two other lattice cosets, namely $\vec a = (1,3,9)^T$, $N=2$, and $\vec h = (0,0,1)^T, (1,0,0)^T$. Thus the genus and the spinor genus of 63 of the found 85 lattice cosets have been classified. For 16 of the remaining 22 unclassified lattice cosets, a determination of the representatives of the genus was still possible, and was carried out in Proposition \ref{prop::classify_genus_all_listed} and Appendix \ref{sctn::gen_info}. On the other hand, our method to compute the representatives in the spinor genera requires computing $\theta(O^+(N(L_{\vec a})_2 + \vec h))$, and the group $\theta(O^+(N(L_{\vec a})_2 + \Z_2 \vec h)) \cap \theta(O^+((L_{\vec a})_2))$ for these 16 lattices was too large to effectively determine $\theta(O^+(N(L_{\vec a})_2 + \vec h))$. Currently, there does not seem to be any general method known to compute $\theta(O^+(N(L_{\vec a})_2 + \vec h))$ directly.
	
	\subsection{Classification of All 2-adically Unimodular Conductor 2 Lattice Cosets} \label{sctn::classify_cond2}
	
	As another example illustrating the use of Proposition \ref{prop::matrix_reduction_of_isometries}, we classify all lattice cosets of the form $2L_{\vec a}+\vec h$, where $\vec h$ is one of the 7 distinct non-zero vectors modulo $2$, and $L = L_{\vec a}$ is a 2-adically unimodular lattice (i.e. $2\nmid a_1a_2a_3$) with class number 1. In this case, in the notation of Proposition \ref{prop::matrix_reduction_of_isometries}, we will always have $q'=8$, $q=2$ and $\delta = 2$. Also, Lemma \ref{lem::Mass_Formula} along with Corollary \ref{cor::grp_index_using_matrix_reduction} reduces to simply 
	\begin{equation}\label{eqn::mass_formula_cond2}
		\sum_{2L+\vec h' \in \mc G} \frac{1}{|O^+(2L+\vec h')|} = \frac{[O^+(L_2) : O^+(2L_2+\vec h) ]}{|O^+(L)|} = \frac{|G_{L,q',2}|}{|H_{L,\vec h, q',2}| \cdot |O^+(L)|}
	\end{equation}
	where $\mc G$ is a set of class representatives in the genus of $2L+\vec h$. 
	
	The next lemma guarantees a symmetry between different, not necessarily $p$-adically equivalent, lattice cosets. The lemma is stated and proved in more generality with the 2-adic unimodularity condition removed.
	
	\begin{lemma}\label{lem::cond2_symmetry_of_100_011}
		Let $\vec h = \vec e_1+\vec e_2+\vec e_3$, and consider $1\le i< j\le 3$. If $L$ and $2L+\vec h$ have class number 1, then $2L+\vec e_j$ is in the same class (resp. genus, spinor genus) of $2L+\vec e_i$ if and only if $2L+(\vec h - \vec e_j)$ is in the same class (resp. genus, spinor genus) of $2L+(\vec h - \vec e_i)$.
	\end{lemma}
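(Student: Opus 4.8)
The plan is to show that any isometry witnessing $2L+\vec e_j\sim 2L+\vec e_i$ must automatically fix the lattice coset $2L+\vec h$, and then to transport it by linearity to an isometry witnessing $2L+(\vec h-\vec e_j)\sim 2L+(\vec h-\vec e_i)$; the reverse implication follows by symmetry. Here ``$\sim$'' will stand uniformly for ``same class'', ``same genus'', or ``same spinor genus'', with corresponding ambient group of isometries $O^+(V)$, $O_{\A}(V)$, or $O^+(V)O_{\A}'(V)$ respectively, so that all three cases are handled at once. Throughout I would keep in mind that for odd primes $p$ one has $2L_p=L_p$, so the coset conditions at odd primes are vacuous and only the prime $2$ and the global behaviour matter.

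First I would record the key observation: if $\sigma$ lies in one of the three ambient groups and maps some lattice coset with underlying lattice $2L$ to another such coset (equivalently, $\sigma(2L)=2L$), then $\sigma(2L+\vec h)=2L+\vec h$. Indeed $\sigma(2L+\vec h)$ is then again a lattice coset with underlying lattice $2L$, and it lies in $\cls^+(2L+\vec h)$ (resp.\ $\gen^+$, $\spn^+$) by definition of the ambient group, using the action of $O_{\A}(V)$ on lattice cosets as in equation (\ref{eqn::gen_defn}). Since $L$ has class number $1$, every class in $\gen^+(2L+\vec h)$ has a representative with underlying lattice $2L$; since $2L+\vec h$ has class number $1$ there is only one such class; and by Lemma~\ref{lem::characterize_O(L)} every $\rho\in O^+(L)$ merely permutes $\{\pm\vec e_1,\pm\vec e_2,\pm\vec e_3\}$ and hence fixes $\vec h=\vec e_1+\vec e_2+\vec e_3$ modulo $2L$. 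Therefore the class --- hence also the genus and spinor genus, which are sandwiched between it and $\gen^+$ --- of $2L+\vec h$, restricted to cosets with underlying lattice $2L$, is the single coset $\{2L+\vec h\}$, and this forces $\sigma(2L+\vec h)=2L+\vec h$.

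With this in hand the argument concludes quickly. Suppose $2L+\vec e_j\sim 2L+\vec e_i$, witnessed by $\sigma$ in the appropriate ambient group. Then $\sigma(2L)=2L$, so the key observation gives $\sigma_p\vec h\equiv\vec h\pmod{2L_p}$ for all $p$, while $\sigma_p\vec e_i\equiv\vec e_j\pmod{2L_p}$ for all $p$ (trivially at odd $p$); subtracting, $\sigma_p(\vec h-\vec e_i)\equiv\vec h-\vec e_j\pmod{2L_p}$, i.e.\ $\sigma(2L+(\vec h-\vec e_i))=2L+(\vec h-\vec e_j)$, so that $2L+(\vec h-\vec e_j)\sim 2L+(\vec h-\vec e_i)$. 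The reverse implication is obtained by applying the same argument with the pair $\vec e_i,\vec e_j$ replaced by $\vec h-\vec e_i,\vec h-\vec e_j$ (these cosets again have underlying lattice $2L$, so the key observation still applies), using $\vec h-(\vec h-\vec e_k)=\vec e_k$. One could alternatively phrase the whole proof through Proposition~\ref{prop::matrix_reduction_of_isometries}: the hypothesis on $2L+\vec h$ is equivalent to the statement that every matrix in $G_{L,q',q}$ fixes $\vec h$ modulo $q=2$, and linearity of the matrix action modulo $2$ then finishes it.

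The main point requiring care is the bookkeeping around the three ambient groups simultaneously and, in particular, ensuring that the class-number-$1$ hypothesis on $2L+\vec h$ genuinely pins down its orbit to a \emph{single coset} with underlying lattice $2L$, rather than a single isometry class that could a priori be realised by several distinct cosets with underlying lattice $2L$. This is exactly where the class-number-$1$ hypothesis on $L$ enters --- to normalise all genus representatives so that they have underlying lattice $2L$ --- and where Lemma~\ref{lem::characterize_O(L)} enters --- to see that $O^+(L)$ cannot move $\vec h$ modulo $2L$. Checking that $\spn^+(2L+\vec h)$ lies between $\cls^+(2L+\vec h)$ and $\gen^+(2L+\vec h)$, so that it too collapses to $\{2L+\vec h\}$, is routine from the definitions in Section~\ref{sctn::classifications}.
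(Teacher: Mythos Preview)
Your proof is correct and follows essentially the same approach as the paper: both establish the key observation that any $\Sigma$ in the ambient group with $\Sigma(2L)=2L$ must fix $2L+\vec h$ (using class number~1 of $2L+\vec h$ together with Lemma~\ref{lem::characterize_O(L)}), and then conclude by linearity. The paper proves the bi-implication for arbitrary $\Sigma\in O_{\A}(V)$ and specializes to the three subgroups at the end, while you treat all three cases in parallel with the ``$\sim$'' notation; this is a purely cosmetic difference. One small remark: the class-number-$1$ hypothesis on $L$ that you invoke (``every class in $\gen^+(2L+\vec h)$ has a representative with underlying lattice $2L$'') is not actually needed in your argument, since $\sigma(2L+\vec h)$ already has underlying lattice $2L$ by construction --- the paper's proof likewise does not use this hypothesis explicitly.
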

	\begin{proof}
		First let us show that, for any $\Sigma \in O_\A(V)$, we have $\Sigma(2L+\vec e_i) = 2L+\vec e_j$ if and only if $\Sigma (2L+(\vec h - \vec e_i)) = 2L+ \vec (\vec h - \vec e_j)$. In either case however, $\Sigma$ must satisfy $\Sigma L = L$. Now, if $\Sigma L = L$, then notice that $\Sigma (2L+\vec h) = 2L+\vec v$ for some $\vec v\in L$. By definition, we have $2L+\vec v\in \gen^+(2L+\vec h)$. 
		Since $2L+\vec h$ is assumed to be of class number 1, it follows that $2L+\vec v \in \cls^+(2L+\vec h)$. It can be checked, using Lemma \ref{lem::characterize_O(L)}, that $2L+\vec h$ is fixed by $O^+(L)$, which then implies that $\Sigma(2L + \vec h) = 2L + \vec h$. 
		Thus, for all $p$ there exists $\vec u_p \in L_p$ such that $\Sigma_p \vec h = \vec h + 2\vec u_p$.
		
		Now, suppose that $\Sigma (2L+\vec e_i) = 2L+\vec e_j$; then $\Sigma_p \vec e_i = \vec e_j + 2\vec v_p$ for some $\vec v_p\in L_p$, for all $p$. This implies that $\Sigma_p(\vec h - \vec e_i) = \vec h - \vec e_j + 2(\vec u_p - \vec v_p) \equiv \vec h - \vec e_j \modc{2L_p}$ by linearity, and so from $\Sigma L = L$ it follows that $\Sigma (2L+(\vec h - \vec e_i)) = 2L+ \vec (\vec h - \vec e_j)$. The backward direction follows in exactly the same way.
		
		Finally, notice that $2L+\vec e_j$ is in the same class (resp. genus, spinor genus) of $2L+\vec e_i$ if and only if there exists $\Sigma$ in $O^+(V)$ (resp. $O_\A(V)$, $O^+(V)O'_\A(V)$) such that $\Sigma(2L+\vec e_i) = 2L+\vec e_j$. By the above result, this is equivalent to $\Sigma (2L+(\vec h - \vec e_i)) = 2L+ \vec (\vec h - \vec e_j)$ for some $\Sigma$ in $O^+(V)$ (resp. $O_\A(V)$, $O^+(V)O'_\A(V)$), which is itself equivalent to $2L+(\vec h - \vec e_j)$ being in the same class (resp. genus, spinor genus) of $2L+(\vec h - \vec e_i)$. The result follows.
	\end{proof}
	
	To make use of Proposition \ref{prop::matrix_reduction_of_isometries}, let us completely characterize $G_{L, 8, 2}$ for 2-adically unimodular lattices. 
	
	\begin{lemma}\label{lem::characterize_G_L82}
		For a permutation $s$ in the symmetric group $S_3$, let $E_s$ denote the matrix whose $i$'th column is the $s(i)$'th standard basis vector of $(\Z/2\Z)^3$, i.e. the only non-zero entries in $E_s$ are 1s in the $(i,s(i))$'th entries for $i=1,2,3$. Then, for $L_{\vec a}$ 2-adically unimodular, \[G_{L,8,2} = \{E_s : s\in S_3 \text{ such that } a_{s(i)} \equiv a_i \modc{4} \text{ for } i=1,2,3\}.\]
	\end{lemma}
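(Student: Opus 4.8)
The plan is to show the two inclusions $G_{L,8,2} \supseteq \{E_s : a_{s(i)} \equiv a_i \modc 4\}$ and $G_{L,8,2} \subseteq \{E_s : a_{s(i)} \equiv a_i \modc 4\}$ separately. For the easy direction, given a permutation $s$ with $a_{s(i)} \equiv a_i \modc 4$ for all $i$, I would first observe that the integer matrix $P_s \in SL_3(\Z)$ (or with a sign flip to fix the determinant) permuting the standard basis vectors according to $s$ satisfies $P_s^T A P_s = \diag\{a_{s(1)}, a_{s(2)}, a_{s(3)}\}$, which is congruent to $A$ modulo $4$; but we need congruence modulo $q' = 8$, not $4$. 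This is where the factor $p^\delta$ with $\delta = 2$ in Proposition \ref{prop::matrix_reduction_of_isometries} earns its keep: one checks, via the local square theorem, that $a_{s(i)} \equiv a_i \modc 4$ with both odd forces $a_{s(i)} = a_i u_i^2$ for some $u_i \in \Z_2^\times$ with $u_i \equiv 1 \modc 2$, and then the matrix $(\diag\{u_1,u_2,u_3\})^{-1} P_s \in SL_3(\Z_2)$ actually preserves $A$ exactly (so lies in the group $G'$ from the proof of the Proposition), hence reduces mod $2$ to $E_s \in G_{L,8,2}$. So $E_s$ genuinely lies in $G$ whenever the congruence condition holds.

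For the reverse inclusion, take any $X + 2M_3(\Z/8\Z) \in G_{L,8,2}$, so $X \in SL_3(\Z/8\Z)$ with $X^T A X \equiv A \modc 8$; I want to show $X \equiv E_s \modc 2$ for some such $s$. The key is to reduce everything modulo $2$: from $X^T A X \equiv A \modc 8$ and $2 \nmid a_1 a_2 a_3$, reducing mod $2$ gives that the columns $\bar{\vec x}_j$ of $\bar X \in M_3(\Z/2\Z)$ satisfy $\sum_i a_i \bar x_{ij}^2 \equiv a_j \equiv 1 \modc 2$ and $\sum_i a_i \bar x_{ij} \bar x_{ij'} \equiv 0 \modc 2$ for $j \ne j'$ — but over $\F_2$, $a_i \equiv 1$, so this says $\bar X^T \bar X \equiv I \modc 2$, i.e. the columns of $\bar X$ form an orthonormal basis of $\F_2^3$ with the standard dot product. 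Over $\F_2$ an orthonormal set for the standard bilinear form, combined with $\det \bar X \ne 0$ (which follows since $\det X$ is a unit mod $8$), forces each column to be a distinct standard basis vector: the only vectors $\vec v \in \F_2^3$ with $\vec v \cdot \vec v = 1$ and which can be extended to such a basis are $\vec e_1, \vec e_2, \vec e_3$ (a vector of weight $3$ has $\vec v \cdot \vec v = 1$ too, but then one cannot complete to three mutually orthogonal vectors — this small case-check over $\F_2$ is the one genuinely computational point). Hence $\bar X = E_s$ for some $s \in S_3$. Finally, lifting back: knowing $X \equiv E_s \modc 2$, write $X = P_s + 2Y$ (with $P_s$ a permutation matrix lifting $E_s$) and expand $X^T A X \equiv A \modc 8$; modulo $4$ this already gives $P_s^T A P_s \equiv A \modc 4$, i.e. $a_{s(i)} \equiv a_i \modc 4$, which is exactly the constraint on $s$.

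I expect the main obstacle to be bookkeeping the $2$-adic square-class argument cleanly in the forward direction — making sure the $\delta = 2$ slack in $q' = 8$ really does upgrade a mod-$4$ congruence of the diagonal entries into an \emph{exact} isometry over $\Z_2$ via the local square theorem (\cite[63:1]{omeara}), so that the resulting matrix lies in $G'$ and not merely in something congruent to $A$ mod $8$. The $\F_2$-combinatorics (orthonormal bases of $\F_2^3$ are exactly coordinate permutations) is routine but should be stated explicitly. Everything else is a direct unwinding of the definitions \eqref{eqn::defn_G} and \eqref{eqn::defn_H} together with Proposition \ref{prop::matrix_reduction_of_isometries}.
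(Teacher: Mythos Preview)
Your argument for the inclusion $G_{L,8,2} \subseteq \{E_s : a_{s(i)} \equiv a_i \modc 4\}$ is correct, and is a pleasant variant of the paper's: you reduce first to an $\F_2$-orthogonality statement and then read off the mod-$4$ congruence by expanding $(P_s+2Y)^TA(P_s+2Y)$ modulo $4$, whereas the paper works directly modulo $8$ (using that odd squares are $\equiv 1 \modc 8$ and that each column can have only one odd entry). Both routes are fine.

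However, your proof of the reverse inclusion has a genuine gap. You claim that $a_{s(i)} \equiv a_i \modc 4$ with both odd forces $a_{s(i)} = a_i u_i^2$ for some $u_i \in \Z_2^\times$, invoking the local square theorem. But for $p=2$ the local square theorem gives $(\Z_2^\times)^2 = 1 + 8\Z_2$, so $a_{s(i)}/a_i$ is a $2$-adic square only when $a_{s(i)} \equiv a_i \modc 8$, not merely $\modc 4$. Concretely, for $\vec a = (1,5,1)^T$ and $s = (1\,2)$ you would need $5 \in (\Z_2^\times)^2$, which is false; so the matrix $(\diag\{u_i\})^{-1}P_s$ you propose does not exist. (This is exactly why $\delta = 2$ appears in Proposition \ref{prop::matrix_reduction_of_isometries}: the passage from a mod-$4$ congruence of diagonal entries to an exact isometry is \emph{not} available by diagonal scaling alone.) The paper instead exhibits explicit rotations in $O^+(L_2)$ hitting each transposition: when $a_i \equiv a_j \modc 4$ one checks that $\tau_{\vec e_1}\tau_{\vec e_i - \vec e_j} \in O^+(L_2)$ (the key point being $\ord_2(a_i+a_j)=1$, so the entries $\frac{2a_i}{a_i+a_j}$, $\frac{a_j-a_i}{a_i+a_j}$ lie in $\Z_2$) and that its matrix reduces to $E_{(i\,j)}$ modulo $2$. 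Since transpositions generate $S_3$, this gives all required $E_s$.
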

	\begin{proof}
		Consider any $X\in G_{L,8,2}$; then there exists $Y\in SL_3(\Z/8\Z)$ such that $Y\equiv X\modc{2}$ and $Y^TAY\equiv A\modc{8}$. Let $Y=(y_{ij})$. Since $\sum_{i=1}^3 a_iy_{ij}^2 \equiv a_j \modc{8}$ for $j=1,2,3$, where each of the $a_i$ are odd, it follows that an odd number of the $y_{ij}$ are odd for each $j$. On the other hand, from $8|\sum_{i=1}^3 a_i y_{ij}y_{ik}$ for $1\le j<k\le 3$, it follows that at least one of $y_{1j},y_{2j},y_{3j}$ must be even for each $j$. Hence, exactly one of $y_{1j},y_{2j},y_{3j}$ must be odd (and the others even) for each $j$, and so $X = E_s$ for some permutation $s\in S_3$. From $\sum_{i=1}^3 a_iy_{ij}^2 \equiv a_j \modc{8}$ (where $y_{ij}^2\equiv 1\modc{8}$ if $y_{ij}$ is odd) it then follows that $a_{s(i)} \equiv a_i\modc{4}$ as well. Thus $G_{L,8,2}$ is contained in the set on the right hand side.
		
		For a rotation $\sigma$, let $[\sigma]$ denote its coordinate matrix in $M_3(\Q_2)$. If $ a_i \equiv a_j \equiv \pm 1\modc{4}$, a simple computation yields $\tau_{\vec e_1} \tau_{\vec e_i - \vec e_j} \in O^+(L)$ and
		\[[\tau_{\vec e_1}\tau_{\vec e_i- \vec e_j}] \equiv E_{(i,j)} \modc{2} \]
		for all $1\le i<j\le 3$, where $(i,j)$ is the transposition in $S_3$ that swaps $i$ and $j$. Since $S_3$ is generated by these transpositions, it follows that each of the $E_s$ lies in the image of the homomorphism $\Psi$ whenever $s$ fixes $\vec a\modc{4}$. The proposition follows.
	\end{proof}
	Note that this lemma implies $G_{L_{\vec a},8,2} = G_{L_{\vec a'},8,2}$ if and only if $\vec a' \equiv \vec a \modc{4}$.
	
	We now classify all 2-adically unimodular lattice cosets $2L+\vec h$ in the following six propositions. 
	\begin{proposition}\label{prop::classify_cond2_h=111}
		For $L$ 2-adically unimodular with class number 1, and for $\vec h = \vec e_1+\vec e_2+\vec e_3$, we have \[\gen^+(2L+\vec h) = \spn^+(2L+\vec h) = \cls^+(2L+\vec h).\]
	\end{proposition}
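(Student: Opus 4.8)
The plan is to prove the stronger statement that $2L+\vec h$ has class number $1$; since one always has the chain $\cls^+(2L+\vec h) \subseteq \spn^+(2L+\vec h) \subseteq \gen^+(2L+\vec h)$, the desired triple equality then follows immediately.

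First I would apply the reduction described after Corollary \ref{cor::grp_index_using_matrix_reduction}: because $L$ has class number $1$, the classes occurring in $\gen^+(2L+\vec h)$ are exactly those in the orbit of $2L+\vec h$ under $O_{\A}(L) = \prod_p O^+(L_p)$, and by Proposition \ref{prop::matrix_reduction_of_isometries} this orbit is computed by the action of $\prod_{p \mid N} G_{L,q',q}$ on $\vec h$ reduced modulo $q$. Here $N = 2$, so only $p = 2$ contributes: for odd $p$ one has $2L_p = L_p$, hence $2L_p + \vec h = L_p$, which is fixed by $O^+(L_p)$; and for $p = 2$ we have $q = 2$, $q' = 8$, $\delta = 2$ as noted above. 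Thus the classes in $\gen^+(2L+\vec h)$ correspond to the elements of the $G_{L,8,2}$-orbit of $\vec h \equiv (1,1,1)^T \modc{2}$.

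Now I would invoke Lemma \ref{lem::characterize_G_L82}: every element of $G_{L,8,2}$ is a permutation matrix $E_s$ with $s \in S_3$, and any permutation matrix satisfies $E_s (1,1,1)^T = (1,1,1)^T$. Hence the $G_{L,8,2}$-orbit of $\vec h \modc{2}$ is the single point $\{(1,1,1)^T\}$, so $\gen^+(2L+\vec h)$ contains only the one class $\cls^+(2L+\vec h)$, and therefore $\gen^+(2L+\vec h) = \spn^+(2L+\vec h) = \cls^+(2L+\vec h)$. There is no genuine obstacle here, and not even an appeal to the Mass Formula (Lemma \ref{lem::Mass_Formula}) is needed; the whole argument is carried by the structural description of $G_{L,8,2}$ together with the trivial fact that permutations fix the all-ones vector. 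The only points deserving a line of care are that the odd-prime localizations contribute nothing and that the passage from the adelic orbit to the finite $G_{L,8,2}$-orbit is exactly what Proposition \ref{prop::matrix_reduction_of_isometries} licenses.
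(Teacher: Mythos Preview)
Your argument is correct and rests on the same key observation as the paper's proof: every element of $G_{L,8,2}$ is a permutation matrix $E_s$ (Lemma~\ref{lem::characterize_G_L82}), and any such matrix fixes $(1,1,1)^T$ modulo $2$.

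The only difference is in how the conclusion is drawn. The paper phrases this as $H_{L,\vec h,8,2} = G_{L,8,2}$ and then invokes the Mass Formula (equation~\eqref{eqn::mass_formula_cond2}), together with the observation $O^+(2L+\vec h)=O^+(L)$, to see that the single term $1/|O^+(2L+\vec h)|$ already exhausts the mass. You instead use the orbit description from the paragraph following Corollary~\ref{cor::grp_index_using_matrix_reduction} to conclude directly that the $G_{L,8,2}$-orbit of $\vec h\modc{2}$ is a singleton, hence the genus has one class. Your route is slightly more direct and, as you note, bypasses the Mass Formula entirely; the paper's route has the advantage of fitting into the uniform template used for all the conductor-$2$ propositions in Section~\ref{sctn::classify_cond2}. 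Either way the substance is identical.
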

	\begin{proof}
		It is easy to see that any element of $E_s$ of $G_{L,8,2}$ satisfies $E_s\vec h =\vec h$, and so $H_{L, \vec h,8,2} = G_{L,8,2}$. As $O^+(2L+\vec h) = O^+(L)$ as well, by equation (\ref{eqn::mass_formula_cond2}) 
		\[\frac{1}{|O^+(2L+\vec h)|} \;+\! \sum_{2L+\vec h' \in \mc G \backslash \{2L+\vec h\}} \frac{1}{|O^+(2L+\vec h')|} =  \frac{|G_{L,8,2}|}{|H_{L,\vec h, 8,2}| \cdot |O^+(L)|} = \frac{1}{|O^+(2L+\vec h)|}.\]
		It follows that the class of $2L+\vec h$ is the only class in its genus.
	\end{proof}
	
	Thus $2L+(\vec e_1+\vec e_2+\vec e_3)$ has class number 1, and so it follows from Lemma \ref{lem::cond2_symmetry_of_100_011} that the structure of the genus (or genera) of $2L+(\vec e_1+\vec e_2), 2L+(\vec e_1+\vec e_3), 2L+(\vec e_2+\vec e_3)$ mirrors that of the genus (or genera) of $2L+\vec e_3$, $2L+\vec e_2$, and $2L+\vec e_1$ respectively. That $2L+(\vec e_i + \vec e_j)$ and $2L+\vec e_k$ (for $1\le i<j\le 3$, and $1\le k\le 3$) are never in the same genus can be seen by noticing that $2L+(\vec e_i + \vec e_j)$ and $2L+\vec e_k$ can never be 2-adically equivalent, which in turn follows from Lemma \ref{lem::characterize_G_L82}, since $E_s(\vec e_i + \vec e_j) \not \equiv \vec e_k \modc{2}$ for all $s\in S_3$. Hence, we need only classify $2L+\vec e_1$, $2L+\vec e_2$, and $2L+\vec e_3$.
	
	\begin{proposition}\label{prop::classify_cond2_(1,1,1)}
		For $\vec a = (1,1,1)^T$, we have \[\gen^+(2L+\vec e_1) = \spn^+(2L+\vec e_1) = \cls^+(2L+\vec e_1) \] 
		where $\cls^+(2L+\vec e_1) = \cls^+(2L+\vec e_2) = \cls^+(2L+\vec e_3) $.
	\end{proposition}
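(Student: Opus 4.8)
The plan is to combine the explicit description of $G_{L,8,2}$ from Lemma \ref{lem::characterize_G_L82} with the orbit interpretation of the genus afforded by Proposition \ref{prop::matrix_reduction_of_isometries}. Since $\vec a=(1,1,1)^T$ satisfies $a_{s(i)}\equiv a_i\modc 4$ for \emph{every} permutation $s\in S_3$, Lemma \ref{lem::characterize_G_L82} gives $G_{L,8,2}=\{E_s:s\in S_3\}$, the full group of $3\times 3$ permutation matrices over $\Z/2\Z$. As $E_s\vec e_1=\vec e_{s(1)}$ and $s(1)$ takes each value in $\{1,2,3\}$ as $s$ runs over $S_3$, the orbit of $\vec e_1\modc 2$ under $G_{L,8,2}$ is exactly $\{\vec e_1,\vec e_2,\vec e_3\}$. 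Because $N=2$ and the only prime dividing $N$ is $2$ (for odd $p$ we have $2L_p+\vec e_1=L_p$), the discussion following Corollary \ref{cor::grp_index_using_matrix_reduction}, using that $L_{(1,1,1)}$ has class number $1$, shows $\gen^+(2L+\vec e_1)=\cls^+(2L+\vec e_1)\cup\cls^+(2L+\vec e_2)\cup\cls^+(2L+\vec e_3)$.

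Next I would show these three classes coincide. By Lemma \ref{lem::characterize_O(L)}, $O^+(L)$ consists of the signed permutation matrices of determinant $1$; in particular the matrix of the $3$-cycle $\vec e_1\mapsto\vec e_2\mapsto\vec e_3\mapsto\vec e_1$ (an even permutation, hence of determinant $1$) lies in $O^+(L)$ and carries $2L+\vec e_1$ to $2L+\vec e_2$ and $2L+\vec e_2$ to $2L+\vec e_3$. Thus $\cls^+(2L+\vec e_1)=\cls^+(2L+\vec e_2)=\cls^+(2L+\vec e_3)$, and combined with the first paragraph this yields $\gen^+(2L+\vec e_1)=\cls^+(2L+\vec e_1)$. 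Since $\cls^+(2L+\vec e_1)\subseteq\spn^+(2L+\vec e_1)\subseteq\gen^+(2L+\vec e_1)$ always holds, the three coincide, which is the proposition.

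As an alternative (or cross-check) for the genus-equals-class step one may verify equation (\ref{eqn::mass_formula_cond2}) directly: $|G_{L,8,2}|=|S_3|=6$, the stabilizer $H_{L,\vec e_1,8,2}=\{E_s:s(1)=1\}$ has order $2$, $|O^+(L)|=24$, and $|O^+(2L+\vec e_1)|=8$ (the signed permutations of determinant $1$ sending $\vec e_1$ to $\pm\vec e_1$), so $\tfrac{1}{|O^+(2L+\vec e_1)|}=\tfrac18=\tfrac{|G_{L,8,2}|}{|H_{L,\vec e_1,8,2}|\,|O^+(L)|}$ and Lemma \ref{lem::Mass_Formula} forces a single class in the genus. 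There is no real obstacle here: all computations are elementary, and the only point needing care is the orbit computation in the first paragraph — confirming that no prime other than $2$ contributes, and that $G_{L,8,2}$, while acting transitively on $\{\vec e_1,\vec e_2,\vec e_3\}$, does not carry $\vec e_1$ outside this set, so that for instance $2L+(\vec e_1+\vec e_2)$ does not slip into the genus.
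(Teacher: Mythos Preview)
Your proof is correct and follows essentially the same approach as the paper: both use Lemma \ref{lem::characterize_G_L82} to identify $G_{L,8,2}$ with the full permutation group, Lemma \ref{lem::characterize_O(L)} to produce rotations in $O^+(L)$ identifying the three classes, and the mass-formula check of equation (\ref{eqn::mass_formula_cond2}) with the values $|G_{L,8,2}|=6$, $|H_{L,\vec e_1,8,2}|=2$, $|O^+(L)|=24$, $|O^+(2L+\vec e_1)|=8$. The only cosmetic differences are that the paper uses the explicit products $\tau_{\vec e_1}\tau_{\vec e_1-\vec e_2}$ and $\tau_{\vec e_1}\tau_{\vec e_1-\vec e_3}$ rather than your $3$-cycle, and treats the mass formula as the primary argument for exhausting the genus rather than as a cross-check of the orbit computation.
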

	\begin{proof}
		Note that $|O^+(L)|=24$ by Lemma \ref{lem::characterize_O(L)}, while $|G_{L,8,2}| = 6$ by the above lemma. Moreover, a simple computation shows that \[(\tau_{\vec e_1} \tau_{\vec e_1-\vec e_2}) (2L+\vec e_1) = 2L+\vec e_2 \quad\text{ and }\quad  (\tau_{\vec e_1} \tau_{\vec e_1-\vec e_3}) (2L+\vec e_1) = 2L+\vec e_3 \]
		where $\tau_{\vec e_1} \tau_{\vec e_1-\vec e_2}, \tau_{\vec e_1} \tau_{\vec e_1-\vec e_3} \in O^+(L)$, and so $2L+\vec e_2,2L+\vec e_3\in \cls^+(2L+\vec e_1)$. That this class exhausts the genus can be seen by equation (\ref{eqn::mass_formula_cond2}), noting that $|O^+(2L+\vec e_1)| = 8$ and $H_{L,\vec e_1,8,2} = \{E_{\mathrm{id}}, E_{(2,3)}\}$. The proposition follows.
	\end{proof}
	
	\begin{proposition}\label{prop::classify_cond2_2uni_(a,a,b)_a=b(4)}
		Suppose $\vec a= (a,a,b)^T$ where $a\ne b$ and $a\equiv b\equiv \pm 1\modc{4}$. Then, we have the following classification \[\gen^+(2L+\vec e_1) = \spn^+(2L+ \vec e_1) = \cls^+(2L+\vec e_1) \sqcup \cls^+(2L+\vec e_3)\]
		where $\cls^+(2L+\vec e_2) = \cls^+(2L+\vec e_1)$.
	\end{proposition}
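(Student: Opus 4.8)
The plan is to settle the genus first, via Lemmas~\ref{lem::characterize_G_L82} and~\ref{lem::characterize_O(L)}, and then to show the two classes in the genus fuse into a single spinor genus by writing down an explicit element of $O'_\A(V)$ joining them.

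For the genus, note that the hypothesis $a\equiv b\equiv\pm1\pmod4$ forces $a\equiv b\pmod4$, so \emph{every} permutation $s\in S_3$ satisfies $a_{s(i)}\equiv a_i\pmod4$; by Lemma~\ref{lem::characterize_G_L82} this means $G_{L,8,2}=\{E_s:s\in S_3\}$ is the full group of $3\times3$ permutation matrices. Hence the orbit of $\vec e_1\bmod2$ under $G_{L,8,2}$ is $\{\vec e_1,\vec e_2,\vec e_3\}\bmod2$, and by Proposition~\ref{prop::matrix_reduction_of_isometries} together with the class-number-$1$ reduction recalled in Section~\ref{sctn::lattice_cosets}, $\gen^+(2L+\vec e_1)$ is exhausted by the classes of $2L+\vec e_1$, $2L+\vec e_2$, $2L+\vec e_3$. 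Now $\tau_{\vec e_1}\tau_{\vec e_1-\vec e_2}\in O^+(L)$ (legitimate since $a_1=a_2$) sends $\vec e_1$ to $\vec e_2$, so $\cls^+(2L+\vec e_2)=\cls^+(2L+\vec e_1)$; on the other hand, by Lemma~\ref{lem::characterize_O(L)} any $\sigma\in O^+(2L)=O^+(L)$ sends $\vec e_1$ to $\pm\vec e_j$ with $a_j=a\neq b$, so $j\in\{1,2\}$ and $\sigma(2L+\vec e_1)\in\{2L+\vec e_1,2L+\vec e_2\}$, never $2L+\vec e_3$. Thus $\cls^+(2L+\vec e_1)$ and $\cls^+(2L+\vec e_3)$ are distinct and $\gen^+(2L+\vec e_1)=\cls^+(2L+\vec e_1)\sqcup\cls^+(2L+\vec e_3)$.

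For the spinor genus it then suffices to prove $2L+\vec e_3\in\spn^+(2L+\vec e_1)$. Since $2L_p+\vec e_1=L_p=2L_p+\vec e_3$ for every odd $p$, it is enough to produce a rotation $\rho\in O^+(L_2)$ with trivial spinor norm and $\rho\vec e_1\equiv\vec e_3\pmod{2L_2}$, for then the adele which is $\rho$ at $2$ and the identity at all other primes lies in $O'_\A(V)$ and carries $2L+\vec e_1$ onto $2L+\vec e_3$. I would obtain $\rho$ as a quotient $\rho_0\rho_1^{-1}$. The hypothesis $a\equiv b\equiv\pm1\pmod4$ gives $\ord_2(a+b)=1$, so $Q(\vec e_1-\vec e_3)=Q(\vec e_2-\vec e_3)=a+b$ has $2$-adic valuation $1$ and the symmetries $\tau_{\vec e_1-\vec e_3}$, $\tau_{\vec e_2-\vec e_3}$ are integral over $\Z_2$; set $\rho_0:=\tau_{\vec e_1}\tau_{\vec e_1-\vec e_3}$ and $\rho_1:=\tau_{\vec e_1}\tau_{\vec e_2-\vec e_3}$, both in $O^+(L_2)$. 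A short computation shows $\rho_0\vec e_1\equiv\vec e_3\pmod{2L_2}$, whereas $\rho_1\vec e_1\equiv\vec e_1\pmod{2L_2}$ since $B(\vec e_2-\vec e_3,\vec e_1)=0$, so $\rho_1\in O^+(2L_2+\vec e_1)$. The crucial point is that $a_1=a_2$ makes $\theta(\rho_0)=a(a+b)(\Q_2^\times)^2=\theta(\rho_1)$, so $\rho:=\rho_0\rho_1^{-1}$ has trivial spinor norm and, since $\rho_1^{-1}$ preserves the coset $2L_2+\vec e_1$, still satisfies $\rho\vec e_1\equiv\vec e_3\pmod{2L_2}$. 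Together with the genus computation this gives $\gen^+(2L+\vec e_1)=\spn^+(2L+\vec e_1)=\cls^+(2L+\vec e_1)\sqcup\cls^+(2L+\vec e_3)$.

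The genus step is routine bookkeeping of the kind already carried out in Propositions~\ref{prop::classify_cond2_h=111}--\ref{prop::classify_cond2_(1,1,1)}; the real work is the spinor-genus step, where one must check carefully that $\rho_0$ and $\rho_1$ are genuinely $\Z_2$-integral (this is exactly where $a\equiv b\equiv\pm1\pmod4$ is used, through $\ord_2(a+b)=1$) and that their spinor norms coincide (forced by $a_1=a_2$). An alternative route, which I expect to require more case analysis, would be to compute $\theta(O^+(2L_2+\vec e_1))$ outright---it turns out to be all of $\Q_2^\times/(\Q_2^\times)^2$, since every $\tau_{\vec u}$ with $\vec u\in L_2$ and $Q(\vec u)\in\Z_2^\times$ lies in $O(2L_2+\vec e_1)$---and then run the counting argument of Lemma~\ref{lem::numspn_theta_triv_p_ne_q} with $q=2$; but the explicit adele above is shorter.
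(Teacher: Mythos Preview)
Your proof is correct and follows the same overall architecture as the paper: identify the classes in the genus via $G_{L,8,2}$ and Lemma~\ref{lem::characterize_O(L)}, then exhibit an explicit element of $O'_\A(V)$ joining $2L+\vec e_1$ to $2L+\vec e_3$. The differences are in the details of both steps.

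For the genus, you argue directly that the $G_{L,8,2}$-orbit of $\vec e_1$ determines the cosets appearing, whereas the paper instead verifies completeness by checking the mass formula~(\ref{eqn::mass_formula_cond2}) with $|G_{L,8,2}|=6$, $|H_{L,\vec e_1,8,2}|=2$, $|O^+(L)|=8$, $|O^+(2L+\vec e_1)|=4$, $|O^+(2L+\vec e_3)|=8$. Both arguments are valid; yours is slightly cleaner in this small case.

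For the spinor genus, the paper writes down a single product of two symmetries, $\Sigma_2=\tau_{\vec e_2+\vec e_3}\tau_{\vec e_1+\vec e_3}$, whose spinor norm is $(a+b)^2(\Q_2^\times)^2$ trivially, and checks directly that $\Sigma_2\vec e_3\equiv\vec e_1\pmod{2L_2}$. Your construction $\rho=\rho_0\rho_1^{-1}$ achieves the same end via four symmetries, using an auxiliary $\rho_1$ in the stabiliser of the coset to cancel the spinor norm. The paper's element is shorter, but your ``correct by a stabiliser element'' trick is a reusable technique that does not depend on spotting a lucky pair of vectors with equal norm, and it makes transparent exactly where the hypotheses $a_1=a_2$ and $\ord_2(a+b)=1$ enter.
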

	\begin{remark}
		In particular, this holds for $\vec a$ in $$ \{(1,1,5)^T, (1,1,9)^T, (1,1,21)^T, (5,5,1)^T, (9,9,1)^T, (21,21,1)^T, (3,3,7)^T, (7,7,3)^T\}.$$
	\end{remark}
	\begin{proof}
		Note that $|G_{L,8,2}|=6$, and $|O^+(L)|=8$ (since $ \vec e_3$ must now be mapped to $\pm \vec e_3$). That $2L+\vec e_1$ and $2L+\vec e_2$ are in the same class follows by checking that $(\tau_{\vec e_3}\tau_{\vec e_1- \vec e_2})(2L+\vec e_1) = 2L+\vec e_2$ where $\tau_{\vec e_3}\tau_{\vec e_1 - \vec e_2}\in O^+(L)$. By going through all 8 elements of $O^+(L)$, it is easy to see that $2L+\vec e_1$ and $2L+\vec e_3$ are in distinct classes. Consider $\Sigma \in O_\A'(V)$ given by $\Sigma_2 = \tau_{\vec e_2 + \vec e_3}\tau_{\vec e_1 + \vec e_3}$, and $\Sigma_p$ is the identity transformation for all other $p$. Note that \[\Sigma_2 \vec e_1 = \frac{b-a}{b+a}\vec e_1 + \frac{4ab}{(a+b)^2} \vec e_2 + \frac{2a(b-a)}{(a+b)^2} \vec e_3 , \qquad
		\Sigma_2 \vec e_2 = \frac{b-a}{b+a}\vec e_2 - \frac{2a}{a+b} \vec e_3, \]
		\[\Sigma_2 \vec e_3 = \frac{-2b}{b+a}\vec e_1 + \frac{2b(b-a)}{(a+b)^2} \vec e_2 + \frac{(a-b)^2}{(a+b)^2} \vec e_3. \]
		Since $a\equiv b\equiv \pm1 \modc{4}$, it follows that $4|(b-a)$ and $\frac12 (a+b) \in \Z_2^\times$, and so $\Sigma_2\in O^+(L_2)$ and $\Sigma_2 \vec e_3 \equiv \vec e_1 \modc{2L_2}$. Hence $\Sigma_2(2L_2 + \vec e_3) = 2L_2 +\vec e_1$, and since $2L_p + \vec e_3 = L_p = 2L_p + \vec e_1$ for all other $p$, it follows that $\Sigma(2L+\vec e_3) = 2L+\vec e_1$. Therefore, $2L+\vec e_1$ and $2L+\vec e_3$ lie in the same spinor genus. 
		
		Finally, that these two classes exhaust the genus can be seen by noticing that $|H_{L,\vec e_1, 8,2}| = 2$, $|O^+(2L+\vec e_1)| = 4$, $O^+(2L+\vec e_3) = O^+(L)$, and so \[\sum_{2L+\vec h' \in \mc G} \frac{1}{|O^+(2L+\vec h')|} = \frac{|G_{L,8,2}|}{|H_{L,\vec h, 8,2}| \cdot |O^+(L)|} = \frac{3}{8} = \frac{1}{|O^+(2L+\vec e_1)|} + \frac{1}{|O^+(2L+\vec e_3)|}\]
		by equation (\ref{eqn::mass_formula_cond2}).
	\end{proof}
	
	\begin{proposition}\label{prop::classify_cond2_(1,3,9)}
		For $\vec a = (1,3,9)^T$, we have 
		\begin{align*}
			\gen^+(2L+\vec e_1) &= \spn^+(2L+ \vec e_1) \sqcup \spn^+(2L+\vec e_3)\\
			\gen^+(2L+\vec e_2) &= \spn^+(2L+ \vec e_2) = \cls^+(2L+\vec e_2)
		\end{align*}
		where $\spn^+(2L+\vec e_1) = \cls^+(2L+\vec e_1)$ and $\spn^+(2L+\vec e_3) = \cls^+(2L+\vec e_3)$.
	\end{proposition}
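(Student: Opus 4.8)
\section*{Proof proposal}

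The plan is to run the same machinery as in the preceding propositions of this section. First I would pin down $G_{L,8,2}$ and $O^+(L)$: since $a_1\equiv a_3\equiv 1$ and $a_2\equiv 3\pmod 4$, Lemma \ref{lem::characterize_G_L82} gives $G_{L,8,2}=\{E_{\mathrm{id}},E_{(13)}\}$ (order $2$), and since $a_1,a_2,a_3$ are pairwise distinct, Lemma \ref{lem::characterize_O(L)} gives $|O^+(L)|=4$ (only the sign changes of determinant $1$). The $G_{L,8,2}$-orbit of $\vec e_2\bmod 2$ is $\{\vec e_2\}$ while that of $\vec e_1\bmod 2$ is $\{\vec e_1,\vec e_3\}$, and $2L+\vec e_1,2L+\vec e_3$ are in distinct classes because $O^+(L)$ (sign changes) fixes every $\vec e_i\bmod 2L$. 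Feeding $|H_{L,\vec e_2,8,2}|=2$, $|H_{L,\vec e_1,8,2}|=1$ and $|O^+(2L+\vec e_i)|=4$ into \eqref{eqn::mass_formula_cond2} then yields $\gen^+(2L+\vec e_2)=\cls^+(2L+\vec e_2)$ and $\gen^+(2L+\vec e_1)=\cls^+(2L+\vec e_1)\sqcup\cls^+(2L+\vec e_3)$. The statement for $\vec e_2$ is then immediate (one class), so it remains to show that the two classes of $\gen^+(2L+\vec e_1)$ lie in two different spinor genera, i.e. that $\gen^+(2L+\vec e_1)$ contains exactly two spinor genera.

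To count them I would invoke Lemma \ref{lem::numspn=2_ind2subgrp_p=2_nonon_trivial_unit_squares_p=q} with $q=2$, $q'=3$ (equivalently one can use Lemma \ref{lem::numspn_theta_triv_p_ne_q} with $q=3$, exploiting that $3(\Q_3^\times)^2$ lies in the $3$-adic image below). For every prime $p\ne 2,3$ we have $2L_p+\vec e_1=L_p\cong\diag\{1,3,9\}$ with all three entries $p$-adic units, so $\theta(O^+(2L_p+\vec e_1))=\Z_p^\times(\Q_p^\times)^2$ by Corollary \ref{cor::spnmap_img_of_modular_lattice}. At $p=3$, again $2L_3+\vec e_1=L_3$, and running the algorithm of Lemma \ref{lem::conway_algorithm} on $\diag\{1,3,9\}$ (here $S_2=T=\emptyset$ and $S_1^2=\{1,3\}(\Q_3^\times)^2$) gives $\theta(O^+(2L_3+\vec e_1))=\{1,3\}(\Q_3^\times)^2$. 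Thus $\theta(O^+(2L_3+\vec e_1))$ is exactly $\{1,q'\}(\Q_{q'}^\times)^2$ and $\theta(O^+(2L_3+\vec e_1))\cap\Z_3^\times(\Q_3^\times)^2=(\Q_3^\times)^2$, so once the remaining hypothesis $\theta(O^+(2L_2+\vec e_1))=\Z_2^\times(\Q_2^\times)^2$ is in hand the lemma produces two spinor genera; as $\gen^+(2L+\vec e_1)$ has only two classes, each is alone in its spinor genus, giving $\spn^+(2L+\vec e_1)=\cls^+(2L+\vec e_1)$, $\spn^+(2L+\vec e_3)=\cls^+(2L+\vec e_3)$ and $\gen^+(2L+\vec e_1)=\spn^+(2L+\vec e_1)\sqcup\spn^+(2L+\vec e_3)$.

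The delicate point, and the main obstacle, is the $2$-adic claim $\theta(O^+(2L_2+\vec e_1))=\Z_2^\times(\Q_2^\times)^2$, because $2\mid N$ and neither of the usual tools helps: the algorithm of Lemma \ref{lem::conway_algorithm} applied to $L_2\cong\diag\{1,3,9\}$ gives all of $\Q_2^\times/(\Q_2^\times)^2$ (the rank-$3$ unimodular Jordan block forces $\Z_2^\times(\Q_2^\times)^2\subseteq T$, and two entries with square ratio force $2(\Q_2^\times)^2\in T$), and the auxiliary lattice $\Z_2\vec e_1+2L_2\cong\diag\{1,12,36\}$ likewise has spinor norm image $\Q_2^\times/(\Q_2^\times)^2$. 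The inclusion ``$\supseteq$'' I would get by exhibiting rotations: $\tau_{\vec e_1}\tau_{\vec e_2}$ has coordinate matrix $\diag\{-1,-1,1\}\in SL_3(\Z_2)$, is $\equiv I\pmod 2$ hence fixes $2L_2+\vec e_1$, and has spinor norm $Q(\vec e_1)Q(\vec e_2)=3(\Q_2^\times)^2$; while $\tau_{\vec e_3}\tau_{6\vec e_2+\vec e_3}$ fixes $\vec e_1$ (both symmetries do, as the relevant bilinear pairings with $\vec e_1$ vanish), preserves $L_2$ (since $Q(6\vec e_2+\vec e_3)=117$ is a $2$-adic unit and $6\vec e_2+\vec e_3$ is primitive), and has spinor norm $Q(\vec e_3)Q(6\vec e_2+\vec e_3)=9\cdot 117\equiv 5\pmod{(\Q_2^\times)^2}$; and $3(\Q_2^\times)^2$, $5(\Q_2^\times)^2$ generate $\Z_2^\times(\Q_2^\times)^2$.

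For the reverse inclusion ``$\subseteq$'' I would first observe that, since the reduction of $O^+(L_2)$ modulo $2$ has image $\{E_{\mathrm{id}},E_{(13)}\}$ and $E_{(13)}\vec e_1\not\equiv\vec e_1$, one has $O^+(2L_2+\vec e_1)=O^+(L_2)\cap(I+2M_3(\Z_2))$, the kernel of that reduction. Then I would check, by a short case analysis on $\bar u\in L_2/2L_2$, that every symmetry $\tau_u$ preserving $L_2$ satisfies: $v_2(Q(u))$ is even exactly when $\tau_u\equiv I\pmod 2$ --- concretely, writing $L_2\cong\langle 1\rangle\perp\langle 1\rangle\perp\langle 3\rangle$, the only $\bar u$ that occur are the three basis classes (then $Q(u)$ is a unit and $\tau_u\equiv I$), the class $(1,1,0)$ (then $Q(u)\equiv 2\bmod 4$ and $\tau_u$ reduces to the transposition $E_{(13)}$), and $(1,1,1)$ (then $Q(u)\equiv 5\bmod 8$ and $\tau_u\equiv I$). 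Since $O^+(L_2)$ is generated by products of pairs of such symmetries and $\sigma\mapsto v_2(\theta(\sigma))\bmod 2$ is a homomorphism, it follows that this homomorphism coincides with the reduction map, so $\theta\big(O^+(2L_2+\vec e_1)\big)=\theta\big(O^+(L_2)\cap(I+2M_3(\Z_2))\big)\subseteq\Z_2^\times(\Q_2^\times)^2$, as needed. With this the spinor-genus count is complete and the proposition follows.
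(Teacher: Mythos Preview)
Your genus computation (via $G_{L,8,2}=\{E_{\mathrm{id}},E_{(13)}\}$, $|O^+(L)|=4$, and \eqref{eqn::mass_formula_cond2}) and your spinor-norm images at $p\ne 2$ coincide with the paper's; the paper then applies Lemma~\ref{lem::numspn_theta_triv_p_ne_q} with $q=3$ (your parenthetical alternative) to conclude there are exactly two spinor genera in $\gen^+(2L+\vec e_1)$.

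Where you diverge from the paper is at $p=2$, and here you have miscomputed the auxiliary-lattice bound. In fact $\theta(O^+(M_2))=\Z_2^\times(\Q_2^\times)^2$ for $M_2=\Z_2\vec e_1+2L_2\cong\diag\{1,12,36\}$, not all of $\Q_2^\times/(\Q_2^\times)^2$: the Jordan decomposition is $\langle 1\rangle\perp 4\langle 3\rangle\perp 4\langle 9\rangle$, and although $S_1^2\cup T$ contains $\Z_2^\times(\Q_2^\times)^2$, it contains no class of odd $2$-adic order (one can verify directly that no symmetry $\tau_u\in O(M_2)$ has $\ord_2 Q(u)$ odd). So the ``main obstacle'' you identify is not one: the containment $\theta(O^+(2L_2+\vec e_1))\subseteq\theta(O^+(M_2))=\Z_2^\times(\Q_2^\times)^2$ already gives the upper bound, and combined with explicit rotations (the paper uses $\tau_{\vec e_1},\tau_{\vec e_2},\tau_{2\vec e_2+\vec e_3}\in O(2L_2+\vec e_1)$ with spinor norms $1,3,5$; your pair $\tau_{\vec e_1}\tau_{\vec e_2}$ and $\tau_{\vec e_3}\tau_{6\vec e_2+\vec e_3}$ works just as well) one gets equality.

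Your alternative argument for ``$\subseteq$'' --- identifying $O^+(2L_2+\vec e_1)$ with $\ker\Psi$ and showing that the two homomorphisms $\ord_2\circ\theta$ and $\Psi$ from $O^+(L_2)$ to $\Z/2\Z$ agree on symmetries --- is a nice idea, and the case analysis on $\bar u$ is correct. But to pass from ``agree on all $\tau_u\in O(L_2)$'' to ``agree on $O^+(L_2)$'' you are tacitly assuming that $O(L_2)$ is generated by the symmetries it contains. That happens to be true for this odd unimodular $\Z_2$-lattice, but at $p=2$ it is a genuine theorem with known exceptions and should be cited rather than taken for granted.
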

	\begin{remark}
		Hence $2L+\vec e_1$ and $2L+\vec e_3$ have spinor class number 1 (but not class number 1), which also explains the identities found for $\vec a = (1,3,9)^T$ and $N=2$ in Appendix \ref{sctn::list_spn1_identities}.
	\end{remark}
	\begin{proof}
		Note that $G_{L,8,2} = \{E_{\mathrm{id}}, E_{(1,3)}\}$ and $|O^+(L)| = 4$ (since now $\sigma \vec e_i = \pm \vec e_i$ for any $\sigma \in O^+(L)$). In particular, it is seen that $2L+\vec e_1$, $2L+\vec e_2$ and $2L+\vec e_3$ are all in distinct classes. Also, as both matrices in $G_{L,8,2}$ fix $(0,1,0)^T$, it follows that $2L+\vec e_1$ and $2L+\vec e_2$ are in distinct genera. On the other hand, since $E_{(1,3)}$ sends $(1,0,0)^T$ to $(0,0,1)^T$, it follows that $2L+\vec e_1$ and $2L+\vec e_3$ are in the same genus. Since $H_{L,\vec e_2, 8,2} = G_{L,8,2}$, $H_{L,\vec e_1, 8,2}=\{E_{\mathrm{id}}\}$ and $O^+(L) = O^+(2L+ \vec e_i)$ for $i=1,2,3$, equation (\ref{eqn::mass_formula_cond2}) implies that $2L+\vec e_2$ is the only class in its genus, while $2L+\vec e_1$ and $2L+\vec e_3$ exhaust all classes in their genus. This gives the classification into genera and classes. Let us now consider the spinor genera.
		
		For all $p\ne 2,3$, notice that $\theta(O^+(2L_p + \vec e_1)) = \theta(O^+(L_p)) = \Z_p^\times (\Q_p^\times)^2$. For $p=3$, the algorithm in Lemma \ref{lem::conway_algorithm} gives $\theta(O^+(2L_3 + \vec e_1)) = \theta(O^+(L_3)) = \{1,3\}(\Q_3^\times)^2$. For $p=2$, consider $M_2= \Z_2 \vec e_1 + 2L_2 \cong \diag\{1,12,36\}$ in the basis $\{\vec e_1, 2\vec e_2, 2\vec e_3\}$. By Lemma \ref{lem::conway_algorithm} again, we have $\theta(O^+(2L_2 + \vec e_1)) \subseteq \theta(O^+(M_2)) = \Z_2^\times (\Q_2^\times )^2$. Since $\tau_{\vec e_1}, \tau_{\vec e_2}, \tau_{2\vec e_2+\vec e_3} \in O(2L_2+\vec e_1)$ have spinor norms $(\Q_2^\times )^2$, $3(\Q_2^\times )^2$, and $5(\Q_2^\times )^2$, it follows that $\theta(O^+(2L_2 + \vec e_1)) = \Z_2^\times \Q_2^\times$. From Lemma \ref{lem::numspn_theta_triv_p_ne_q} it then follows that there are exactly two spinor genera in the genus of $2L+\vec e_1$. Since there are only two classes in this genus as well, the classification given above follows.
	\end{proof}
	
	From this result, along with Proposition \ref{prop::classify_cond2_(1,1,1)} and Lemma \ref{lem::cond2_symmetry_of_100_011}, it also follows that both $2L+ (\vec e_1+\vec e_2)$ and $2L+(\vec e_2 + \vec e_3)$ have spinor class number 1; however no identity is given in Appendix \ref{sctn::list_spn1_identities} for these two lattice cosets. This is because their theta series satisfy the following identities 
	\[\Theta_{\vec a, (1,1,0)^T, 2} = \frac12 \Theta_{\vec a} - \frac12 \Theta_{\vec a, \vec 0, 2} + 2\theta_{\chi_{-12},4} \qquad \Theta_{\vec a, (0,1,1)^T, 2} = \frac12 \Theta_{\vec a} - \frac12 \Theta_{\vec a, \vec 0, 2}- 2\theta_{\chi_{-12},4}\]
	where $\Theta_{\vec a, \vec 0, 2}$ is the generating function for $r_{\vec a, \vec 0, 2}(n)$, which is just equal to $r_{\vec a}(n/4)$ if $4|n$, and is 0 otherwise. In other words, there is a contribution corresponding to imprimitive representations of $n$ (i.e. where $\gcd(h_i, N)\ne 1$ for all $i$) by $Q_{\vec a}$ whenever $4|n$. While this identity is a valid pseudo spinor class number 1 identity, it was out of the scope of this computer search.
	
	\begin{proposition}
		For $\vec a = (1,25,5)^T, (1,9,21)^T, (7,63,3)^T$, we have \[\gen^+(2L+\vec e_1) = \spn^+(2L+ \vec e_1) = \cls^+(2L+\vec e_1) \sqcup \cls^+(2L+\vec e_2) \sqcup \cls^+(2L+\vec e_3)\]
	\end{proposition}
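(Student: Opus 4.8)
The plan is to follow the template of Propositions~\ref{prop::classify_cond2_2uni_(a,a,b)_a=b(4)} and~\ref{prop::classify_cond2_(1,3,9)}. For the genus, first observe that in all three cases $\vec a\equiv(c,c,c)\modc{4}$ for a single unit $c$ (namely $c=1$ for $(1,25,5)^T$ and $(1,9,21)^T$, and $c=3$ for $(7,63,3)^T$), so by Lemma~\ref{lem::characterize_G_L82} the group $G_{L,8,2}$ consists of all six permutation matrices $E_s$, $s\in S_3$. Since $a_1,a_2,a_3$ are pairwise distinct, Lemma~\ref{lem::characterize_O(L)} gives $|O^+(L)|=4$ (the diagonal $\pm1$ matrices of determinant $1$), and since $-\vec e_i\equiv\vec e_i\modc{2L}$ this group fixes each $2L+\vec e_i$; hence $O^+(2L+\vec e_i)=O^+(L)$ and $\cls^+(2L+\vec e_i)=\{2L+\vec e_i\}$. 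The orbit of $\vec e_1\modc{2}$ under $G_{L,8,2}$ is $\{\vec e_1,\vec e_2,\vec e_3\}$, so the three distinct cosets $2L+\vec e_1,2L+\vec e_2,2L+\vec e_3$ all lie in $\gen^+(2L+\vec e_1)$; substituting $|G_{L,8,2}|=6$, $|H_{L,\vec e_1,8,2}|=|\{E_s:s(1)=1\}|=2$ and $|O^+(L)|=4$ into~(\ref{eqn::mass_formula_cond2}) gives $6/(2\cdot4)=3/4=\sum_{i=1}^{3}1/|O^+(2L+\vec e_i)|$, so these three classes exhaust the genus.

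It then remains to show there is a single spinor genus, for which I would apply~(\ref{eqn::num_spn_in_gen}) after computing the image of $\theta$ at each prime. For $p\nmid 2a_1a_2a_3$, Corollary~\ref{cor::spnmap_img_of_modular_lattice} gives $\theta(O^+(2L_p+\vec e_1))=\Z_p^\times(\Q_p^\times)^2$; for the prime $p=7$, which can also divide $a_1a_2a_3$ in the last two cases, $L_7$ has a rank-$2$ Jordan component and Lemma~\ref{lem::conway_algorithm} gives $\theta(O^+(2L_7+\vec e_1))\supseteq\Z_7^\times(\Q_7^\times)^2$. For the one remaining odd prime $p_0$ ($p_0=5$ for $(1,25,5)^T$ and $p_0=3$ for the other two), $2L_{p_0}+\vec e_1=L_{p_0}$ splits $p_0$-adically into three rank-$1$ Jordan pieces of distinct scale, so Lemma~\ref{lem::conway_algorithm} gives $\theta(O^+(2L_{p_0}+\vec e_1))=\{1,p_0\}(\Q_{p_0}^\times)^2$. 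The key step is to compute $\theta(O^+(2L_2+\vec e_1))$: I would check that the symmetries $\tau_{\vec e_1},\tau_{\vec e_2},\tau_{\vec e_3},\tau_{\vec e_2+\vec e_3},\tau_{\vec e_1+\vec e_2+\vec e_3}$ all lie in $O(2L_2+\vec e_1)$, each visibly preserving $L_2$ (hence $2L_2$) and fixing $\vec e_1$ modulo $2L_2$ — here one uses that $a_i$, $a_2+a_3$, and $a_1+a_2+a_3$ have $2$-adic valuations $0$, $1$, and $0$, the middle assertion being exactly $a_2\equiv a_3\modc{4}$. Their pairwise products then lie in $O^+(2L_2+\vec e_1)$ and have $2$-adic spinor norms equal, modulo squares, to the pairwise products of $a_1,a_2,a_3,a_2+a_3,a_1+a_2+a_3$; a short case check shows these span all of $\Q_2^\times/(\Q_2^\times)^2$ in each of the three cases. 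With $\theta(O^+(2L_2+\vec e_1))=\Q_2^\times/(\Q_2^\times)^2$ and $p_0(\Q_{p_0}^\times)^2\in\theta(O^+(2L_{p_0}+\vec e_1))$, a manipulation with \idele s as in the proof of Lemma~\ref{lem::numspn_theta_triv_p_ne_q} shows the index in~(\ref{eqn::num_spn_in_gen}) equals $1$: any \idele{} can be multiplied by a rational supported on $\{2,p_0\}$ so as to lie in $\prod_p\theta(O^+(2L_p+\vec e_1))$, since the factor at $2$ is unrestricted and a suitable power of $p_0$ (together with a unit square) absorbs the obstruction at $p_0$.

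I expect the second half to be the main obstacle, and for a concrete reason: the shortcuts used in Propositions~\ref{prop::2cls_in_genus}--\ref{prop::spnnum=3} all fail here. The ``$M_2:=\Z_2\vec e_1+2L_2$'' upper bound is vacuous, since $\theta(O^+(M_2))$ turns out to be all of $\Q_2^\times/(\Q_2^\times)^2$; and neither Lemma~\ref{lem::numspn_theta_triv_p_ne_q} nor Lemma~\ref{lem::numspn=2_ind2subgrp_p=2_nonon_trivial_unit_squares_p=q} applies directly — the former because $\theta(O^+(2L_2+\vec e_1))$ is strictly larger than $\Z_2^\times(\Q_2^\times)^2$, the latter because $\theta(O^+(2L_{p_0}+\vec e_1))=\{1,p_0\}(\Q_{p_0}^\times)^2$ is not contained in $\Z_{p_0}^\times(\Q_{p_0}^\times)^2$ (indeed, were that lemma applicable it would wrongly predict two spinor genera). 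So one is forced to produce enough genuine automorphisms of the $2$-adic lattice coset to push $\theta(O^+(2L_2+\vec e_1))$ up to the full square-class group — in particular one needs a class $2u(\Q_2^\times)^2$ with $u$ a unit, supplied here by $\tau_{\vec e_1}\tau_{\vec e_2+\vec e_3}$, whose spinor norm $a_1(a_2+a_3)$ has odd $2$-adic valuation — and then carry out the \idele{} count by hand. Checking that these symmetries genuinely preserve the coset and not merely the lattice, and that the spinor norms obtained span $\Q_2^\times/(\Q_2^\times)^2$, is the delicate part, with mildly different bookkeeping in each of the three cases.
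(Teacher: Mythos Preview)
Your genus computation matches the paper's exactly. For the spinor genus, however, the paper takes a much shorter route: rather than computing $\theta(O^+(2L_p+\vec e_1))$ at every prime and evaluating the index in~(\ref{eqn::num_spn_in_gen}), it simply exhibits an explicit $\Sigma\in O'_{\A}(V)$, trivial away from $p=2$, whose image under the map $\Psi$ of Proposition~\ref{prop::matrix_reduction_of_isometries} is the $3$-cycle $E_{(1,3,2)}$. (Concretely, $\Sigma_2=\tau_{\vec e_1-\vec e_2}\tau_{\vec e_1-\vec e_3}\tau_{\vec e_2-\vec e_3}\tau_{2\vec e_1+\vec e_2+\vec e_3}$ for the first two $\vec a$, and $\Sigma_2=\tau_{\vec e_1+\vec e_3}\tau_{2\vec e_1+\vec e_2+\vec e_3}$ for $\vec a=(7,63,3)^T$.) One then checks $\Sigma_2\in O'(L_2)$, so $\Sigma$ cyclically permutes the three cosets and places them all in a single spinor genus.

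Your approach is nonetheless correct. The decisive step---observing that $\tau_{\vec e_2+\vec e_3}\in O(2L_2+\vec e_1)$ because $\ord_2(a_2+a_3)=1$, and that $\theta(\tau_{\vec e_1}\tau_{\vec e_2+\vec e_3})=a_1(a_2+a_3)$ therefore lands in a square class of odd $2$-adic valuation---is exactly what forces $\theta(O^+(2L_2+\vec e_1))$ to be all of $\Q_2^\times/(\Q_2^\times)^2$, and the span check goes through in each case. Your \idele{} argument is a little loosely phrased: one cannot literally use a rational supported only on $\{2,p_0\}$, since odd valuations at other primes must first be killed; after that reduction the remaining obstruction at $p_0$ is that the unit part of $x\mathfrak j_{p_0}$ be a square, and this is toggled by a power of $2$ precisely because $2$ is a nonresidue modulo $p_0\in\{3,5\}$. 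With that clarification the index is indeed $1$. The trade-off is clear: the paper's method is quicker once the right $\Sigma$ is found, while yours is systematic and avoids the search for an explicit adelic rotation with trivial spinor norm.
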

	\begin{proof}
		Note that $a_1,a_2,a_3$ are distinct, $a_1\equiv a_2 \not\equiv a_3\modc{8}$ but $a_1\equiv a_2\equiv a_3\modc{4}$. Thus, $|O^+(L)|=4$ and $|G_{L,8,2}| = 6$. It is easily seen that $2L+\vec e_i$ for $i=1,2,3$ are in distinct classes. Now, consider $\Sigma \in O_\A(V)$ for which $\Sigma_p$ is the identity for all $p\ne 2$, and \[\Sigma_2 = \begin{cases}
			\tau_{\vec e_1 - \vec e_2} \tau_{\vec e_1 - \vec e_3} \tau_{\vec e_2 - \vec e_3} \tau_{2\vec e_1 + \vec e_2 + \vec e_3} & \text{if } \vec a = (1,25,5)^T, (1,9,21)^T,\\
			\tau_{\vec e_1 + \vec e_3} \tau_{2\vec e_1 + \vec e_2 + \vec e_3} & \text{if } \vec a = (7, 63, 3)^T.
		\end{cases}\]
		It can be checked that $\Sigma_2 \in O'(L_2)$, and is $E_{(1,3,2)}$ under the homomorphism $\Psi$ from Proposition \ref{prop::matrix_reduction_of_isometries}. Thus $\Sigma_2$ maps $2L_2 +\vec e_1$ to $2L_2 + \vec e_3$ and $2L_2+\vec e_3$ to $2L_2+\vec e_2$, and so $\Sigma \in O'_\A(V)$ maps $2L+\vec e_1$ and $2L+\vec e_3$ to $2L+\vec e_3$ and $2L+\vec e_2$ respectively. Hence $2L+\vec e_i$ for $i=1,2,3$ all lie in the same spinor genus. Finally, since $H_{L, \vec e_1, 8,2} = \{E_{\mathrm{id}}, E_{(2,3)}\}$, and $O^+(2L+\vec e_i) = O^+(L)$ for $i=1,2,3$, equation (\ref{eqn::mass_formula_cond2}) implies that these three classes exhaust the classes in the genus.
	\end{proof}
	
	\begin{proposition}
		For $\vec a = (1,1,3)^T, (3,3,1)^T$, we have 
		\begin{align*}
			\gen^+(2L+\vec e_1) &= \spn^+(2L+\vec e_1) = \cls^+(2L+\vec e_1) = \cls^+(2L+\vec e_2)\\
			\gen^+(2L+\vec e_3) &= \spn^+(2L+\vec e_3) = \cls^+(2L+\vec e_3) 
		\end{align*}
		and these two genera are distinct.
	\end{proposition}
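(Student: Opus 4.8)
The plan is to run the same machinery as in Propositions~\ref{prop::classify_cond2_(1,1,1)} and~\ref{prop::classify_cond2_(1,3,9)}, and it is enough to treat $\vec a = (1,1,3)^T$, the case $\vec a = (3,3,1)^T$ being word-for-word identical (only the unit $Q(\vec e_3)$ changes). First I would identify the two finite groups governing the genus. Since $a_1 \equiv a_2 \modc 4$ while $a_3 \not\equiv a_1 \modc 4$, the only permutations of $S_3$ fixing $\vec a$ modulo $4$ are $\mathrm{id}$ and $(1,2)$, so Lemma~\ref{lem::characterize_G_L82} gives $G_{L,8,2} = \{E_{\mathrm{id}}, E_{(1,2)}\}$, of order $2$; and by Lemma~\ref{lem::characterize_O(L)}, $O^+(L)$ is the group of signed permutation matrices that permute or fix $\vec e_1, \vec e_2$ and fix $\vec e_3$ up to sign, subject to determinant $1$, which has order $8$.

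Next I would classify $2L+\vec e_3$. Both elements of $G_{L,8,2}$ fix $\vec e_3$ modulo $2$, and $2L_p+\vec e_3 = L_p$ at every odd $p$, so by Proposition~\ref{prop::matrix_reduction_of_isometries} the only class of cosets in $\gen^+(2L+\vec e_3)$ is that of $2L+\vec e_3$. I would confirm this is the full genus via the mass formula~(\ref{eqn::mass_formula_cond2}): one has $H_{L,\vec e_3,8,2} = G_{L,8,2}$, and since every $\sigma \in O^+(L)$ sends $\vec e_3$ to $\pm\vec e_3 \equiv \vec e_3 \modc{2L}$ we get $O^+(2L+\vec e_3) = O^+(L)$, so both sides of~(\ref{eqn::mass_formula_cond2}) equal $\tfrac 18$. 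Hence $\gen^+(2L+\vec e_3) = \cls^+(2L+\vec e_3)$, and since $\cls^+ \subseteq \spn^+ \subseteq \gen^+$ always, the spinor genus is squeezed in between and no spinor-norm computation is required.

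For $2L+\vec e_1$ (and $2L+\vec e_2$): $E_{(1,2)}$ sends $\vec e_1$ to $\vec e_2$ modulo $2$, so $\gen^+(2L+\vec e_1)$ consists of the classes of $2L+\vec e_1$ and $2L+\vec e_2$ only; but these coincide, since $\tau_{\vec e_3}\tau_{\vec e_1-\vec e_2} \in O^+(L)$ interchanges $\vec e_1$ and $\vec e_2$ and so maps $2L+\vec e_1$ onto $2L+\vec e_2$. Thus $\gen^+(2L+\vec e_1)$ is again a single class. As a consistency check I would verify~(\ref{eqn::mass_formula_cond2}) once more: $H_{L,\vec e_1,8,2} = \{E_{\mathrm{id}}\}$, and the only elements of $O^+(L)$ fixing $\vec e_1$ modulo $2L$ are the four sign matrices of determinant $1$ (a swap of $\vec e_1$ with $\pm\vec e_2$ does not fix $\vec e_1$ modulo $2L$), so $|O^+(2L+\vec e_1)| = 4$ and both sides equal $\tfrac14$. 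As before, a one-class genus gives $\gen^+(2L+\vec e_1) = \spn^+(2L+\vec e_1) = \cls^+(2L+\vec e_1)$, and $\cls^+(2L+\vec e_2)$ equals this common value.

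Finally, the two genera are distinct: if $2L+\vec e_1$ lay in $\gen^+(2L+\vec e_3)$, then, because the localizations at all odd primes already equal $L_p$, the $2$-adic cosets $2L_2+\vec e_1$ and $2L_2+\vec e_3$ would be isometric, which by Proposition~\ref{prop::matrix_reduction_of_isometries} would put $\vec e_1$ and $\vec e_3$ in a common $G_{L,8,2}$-orbit modulo $2$; but $E_{\mathrm{id}}\vec e_1 = \vec e_1$ and $E_{(1,2)}\vec e_1 = \vec e_2$, neither being $\vec e_3$. There is no real obstacle in any of this: the whole argument is the correct enumeration of $G_{L,8,2}$, $O^+(L)$, and the stabilizers $H_{L,\vec e_i,8,2}$ and $O^+(2L+\vec e_i)$, followed by the observation that a single-class genus has trivial spinor-genus structure automatically; the one place deserving care is the mass-formula bookkeeping, in particular checking that $|O^+(2L+\vec e_1)|$ is $4$ and not $8$.
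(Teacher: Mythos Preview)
Your proof is correct and follows essentially the same approach as the paper: compute $G_{L,8,2}=\{E_{\mathrm{id}},E_{(1,2)}\}$ and $|O^+(L)|=8$, use $\tau_{\vec e_3}\tau_{\vec e_1-\vec e_2}$ to put $2L+\vec e_1$ and $2L+\vec e_2$ in the same class, and verify via the mass formula that each genus is a single class (so the spinor genus is squeezed). The paper handles both $\vec a$ at once by writing $\vec a=(a,a,b)^T$ with $a\not\equiv b\modc 4$, but otherwise the argument and the specific stabilizer computations are the same as yours.
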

	\begin{proof}
		Notice that, in either case, $\vec a = (a,a,b)^T$ where $a\not\equiv b\modc{4}$. This implies that $|O^+(L)| = 8$ and $G_{L,8,2} = \{E_{\mathrm{id}}, E_{(1,2)}\}$. It follows that $2L+\vec e_1$ and $2L+\vec e_3$ are in distinct genera, since both matrices in $G_{L,8,2}$ fix $(0,0,1)^T$ modulo 2. As $\tau_{\vec e_3}\tau_{\vec e_1 - \vec e_2} \in O^+(L)$ maps $\vec e_1$ to $\vec e_2$, it follows that $2L+\vec e_1$ and $2L+\vec e_2$ lie in the same class. Finally, since $H_{L,\vec e_1, 8,2} = \{E_{\mathrm{id}}\}$, $H_{L, \vec e_3,8,2} = G_{L,8,2}$, $|O^+(2L+\vec e_1)| = 4$ and $O^+(2L+\vec e_3) = O^+(L)$, equation (\ref{eqn::mass_formula_cond2}) implies that both $2L+\vec e_1$ and $2L+\vec e_3$ have only one class in their respective genera.
	\end{proof}
	
	\appendix 
	\section{List of All Pseudo-Spinor Class 1 Identities} \label{sctn::list_spn1_identities}
	Table \ref{table::list_spn1_identities} below gives a list of 85 lattice cosets satisfying identities of the form described in Section \ref{sctn::comp_search}. The sub-columns under the column ``$\mc E$'' give the terms $\Theta_{\vec a}\big| S_{M,m}$ and their corresponding coefficients in the linear combination comprising $\mc E$. Multiple rows for the same value of $\vec a, N, \vec h$ imply multiple terms in the linear combination giving $\mc E$. If there is a congruence class $m \modc{M}$ that is missing from the table, the coefficient of $\Theta_{\vec a} \big| S_{M,m}$ is assumed to be zero. Similarly, the linear combination $\mc U$ of unary theta series is given in the column headed by $\mc U$. Here, if a value of $M'$ and $m'$ is given, then the corresponding term is $\theta_{\chi, t} \big| S_{M', m'}$. As an example, the identity for $\vec a = (1,1,3)^T, N = 12, \vec h = (0,3,1)^T$ is \[ \Theta_{\vec a,\vec h,12} = \frac1{20} \Theta_{\vec a}\Big |S_{144, 12} + \frac1{32} \Theta_{\vec a}\Big| S_{288, 84} + \frac1{24} \Theta_{\vec a}\Big| S_{288, 228} + \frac12 \theta_{\chi_{-12}, 12} \]
	while that for $\vec a = (1,1,2)^T, N = 8, \vec h = (0,4,1)^T$ is \[ \Theta_{\vec a, \vec h, 8} = \frac1{24} \Theta_{\vec a} \Big| S_{32, 18} - \frac14 \theta_{\chi_{-4}, 2} \Big| S_{32, 18}. \]
	The right most column gives the number of initial coefficients that need to be checked to prove the identity.
	
	For those rows whose $\vec h$ cell is marked with a \dag, a proof that the corresponding lattice coset indeed has proper spinor class number 1 is given in Section \ref{sctn::classify_found_cosets}, in Propositions \ref{prop::2cls_in_genus} or \ref{prop::3+cls_in_genus_spnnum=1}. The proof for the two lattice cosets corresponding to $\vec a = (1,3,9)^T$ and $N=2$ is given in Section \ref{sctn::classify_cond2}, Proposition \ref{prop::classify_cond2_(1,3,9)}. For those cells marked with a \ddag, the corresponding lattice coset has proper spinor class number greater than 1, a proof of which is again given in Section \ref{sctn::classify_found_cosets}, in Propositions \ref{prop::spnnum=2} or \ref{prop::spnnum=3}.
	
	The code used to check the identities may be found on the author's GitHub page.
	\newpage 
	{\setlength{\extrarowheight}{2pt}
	\begin{footnotesize}
	\begin{longtable}[c]{|c|c|c|c|c|c|c|c|c|c|c|c|c|} 
		\caption{\normalsize{List of 85 Pseudo-Spinor Class 1 Identities}}\label{table::list_spn1_identities}\\
		\hline
		\multirow[c]{2}{*}{$\vec a$} & \multirow{2}{*}{$N$} & \multirow{2}{*}{$\vec h$} & \multicolumn{3}{|c|}{$\mc E$} & \multicolumn{5}{|c|}{$\mc U$} & Congruence & Valence \\ \cline{4-11}
		& & & $M$ & $m$ & coeff. & $\chi$ & $t$ & coeff. & $M'$ & $m'$ & Subgroup $\Gamma$ & Bound\\
		\hline 
		\endfirsthead 
		
		\hline
		\multirow[c]{2}{*}{$\vec a$} & \multirow{2}{*}{$N$} & \multirow{2}{*}{$\vec h$} & \multicolumn{3}{|c|}{$\mc E$} & \multicolumn{5}{|c|}{$\mc U$} & Congruence & Valence \\\nopagebreak \cline{4-11}
		& & & $M$ & $m$ & coeff. & $\chi$ & $t$ & coeff. & $M'$ & $m'$ & Subgroup & Bound\\\nopagebreak
		\hline 
		\endhead 
		
		\hline
		\multirow{4}{*}{$(1,1,1)^T$} & \multirow{2}{*}{4} & $(1,0,0)^T$\dag & 8 & 1 & $1/12$ & $\chi_{-4}$ & 1 & $1/2$ & - & - & $\Gamma_{64, 4}$ & 25  \\
		\cline{3-13}
		&  & $(1,2,2)^T$\dag & 8 & 1 & $1/12$ & $\chi_{-4}$ & 1 & $-1/2$ &- &- &  $\Gamma_{64, 4}$ & 25  \\
		\cline{2-13}
		& \multirow{2}{*}{8} & $(1,2,2)^T$\dag & 16 & 9 & $1/48$ & $\chi_{-4}$ & 1& $-1/8$ & 16& 9 & $\Gamma_{256,16}$ & 384 \\
		\cline{3-13}
		& & $(3,2,2)^T$\dag & 16 & 1 & $1/48$ & $\chi_{-4}$ & 1& $-1/8$ & 16& 1 & $\Gamma_{256,16}$ & 384 \\
		\hline
		
		\multirow{5}{*}{$(1,1,2)^T$} & \multirow{2}{*}{4} & $(1,1,0)^T$\dag & 8 & 2 & $1/12$ & $\chi_{-4}$ & $2$ & $1/2$ & - & - & $\Gamma_{128,4}$ & 48\\\cline{3-13} 
		& & $(1,1,2)^T$\dag & 8 & 2 & $1/12$ & $\chi_{-4}$ & $2$ & $-1/2$ & - & - & $\Gamma_{128,4}$ & 48\\\cline{2-13}
		& \multirow{3}{*}{8} & $(0,4,1)^T$\dag & 32 & 18 & $1/24$ & $\chi_{-4}$ & $2$ & $-1/4$ & 32 & 18 & $\Gamma_{1024,32}$ & 3072\\\cline{3-13} 
		& & $(0,4,3)^T$\dag & 32 & 2 & $1/24$ & $\chi_{-4}$ & $2$ & $-1/4$ & 32 & 2 & $\Gamma_{1024,32}$ & 3072\\\cline{3-13}
		& & $(1,3,2)^T$\ddag & 16 & 2 & $1/48$ & $\chi_{-4}$ & $2$ & $-1/8$ & - & - & $\Gamma_{512,16}$ & 768\\\hline
		
		\multirow{6}{*}{$(1,1,3)^T$} & \multirow{6}{*}{12} & \multirow{3}{*}{$(0,3,1)^T$} & 144 & 12 & $1/20$ & \multirow{3}{*}{$\chi_{-12}$} & \multirow{3}{*}{12} & \multirow{3}{*}{$1/2$} & \multirow{3}{*}{-} & \multirow{3}{*}{-} & \multirow{3}{*}{$\Gamma_{82944,228}$} & \multirow{3}{*}{1990656 }\\\cline{4-6} 
		& &  & 288 & 84 & $1/32$ & & & & & & & \\\cline{4-6}
		& &  & 288 & 228 & $1/24$ & & & & & & & \\\cline{3-13} 
		& & \multirow{3}{*}{$(0,3,5)^T$} & 144 & 12 & $1/20$ & \multirow{3}{*}{$\chi_{-12}$} & \multirow{3}{*}{12} & \multirow{3}{*}{$-1/2$} & \multirow{3}{*}{-} & \multirow{3}{*}{-} & \multirow{3}{*}{$\Gamma_{82944,228}$} & \multirow{3}{*}{1990656 }\\\cline{4-6} 
		& &  & 288 & 84 & $1/32$ & & & & & & & \\\cline{4-6}
		& &  & 288 & 228 & $1/24$ & & & & & & & \\\hline
		
		\multirow{7}{*}{$(1,1,4)^T$} & \multirow{2}{*}{2} & $(0,1,0)^T$\dag & 4 & 1 & $1/4$ & $\chi_{-4}$ & 1 & 1 & - & - & $\Gamma_{64,2}$ & 25 \\\cline{3-13}
		& & $(0,1,1)^T$\dag & 4 & 1 & $1/4$ & $\chi_{-4}$ & 1 & $-1$ & - & - & $\Gamma_{64,2}$ & 25 \\\cline{2-13}
		& 4 & $(1,2,1)^T$\dag & 8 & 1 & $1/16$ & $\chi_{-4}$ & 1 & $-1/4$ & - & - & $\Gamma_{256,4}$ & 96 \\\cline{2-13}
		& \multirow{4}{*}{8} & $(0,0,1)^T$\dag & 64 & 4 & $1/12$ & $\chi_{-4}$ & 4 & $1/2$ & 64 & 4 & $\Gamma_{4096,64}$ & 24576 \\\cline{3-13}
		& & $(0,0,3)^T$\dag & 64 & 36 & $1/12$ & $\chi_{-4}$ & 4 & $1/2$ & 64 & 36 & $\Gamma_{4096,64}$ & 24576 \\\cline{3-13}
		& & $(4,4,1)^T$\dag & 64 & 36 & $1/12$ & $\chi_{-4}$ & 4 & $-1/2$ & 64 & 36 & $\Gamma_{4096,64}$ & 24576 \\\cline{3-13}
		& & $(4,4,3)^T$\dag & 64 & 4 & $1/12$ & $\chi_{-4}$ & 4 & $-1/2$ & 64 & 4 & $\Gamma_{4096,64}$ & 24576 \\\hline
		
		\multirow{5}{*}{$(1,1,8)^T$} & \multirow{2}{*}{2} & $(1,1,0)^T$\dag & 8 & 2 & $1/2$ & $\chi_{-4}$ & 2 & 2 & - & - & $\Gamma_{128, 2}$ & 25 \\\cline{3-13}
		& & $(1,1,1)^T$\dag & 8 & 2 & $1/2$ & $\chi_{-4}$ & 2 & $-2$ & - & - & $\Gamma_{128, 2}$ & 25 \\\cline{2-13}
		& 4 & $(1,1,1)^T$\dag & 8 & 2 & $1/16$ & $\chi_{-4}$ & 2 & $-1/4$ & - & - & $\Gamma_{512, 4}$ & 192 \\\cline{2-13} 
		& \multirow{2}{*}{8} & $(0,0,1)^T$ & 64 & 8 & $1/12$ & $\chi_{-2}$ & 8 & $1/2$ & - & - & $\Gamma_{4096, 64}$ & 24576 \\\cline{3-13}
		& & $(0,0,3)^T$ & 64 & 8 & $1/12$ & $\chi_{-2}$ & 8 & $-1/2$ & - & - & $\Gamma_{4096, 64}$ & 24576 \\\hline
		%
		%
		\multirow{4}{*}{$(1,2,2)^T$} & 4 & $(1,0,2)^T$\dag & 8 & 1 & $1/8$ & $\chi_{-4}$ & 1 & $-1/4$ & - & - & $\Gamma_{128, 4}$ & 48 \\\cline{2-13} 
		& \multirow{3}{*}{8} & $(2,1,1)^T$ & 32 & 8 & $1/24$ & $\chi_{-4}$ & 8 & $1/2$ & - & - & $\Gamma_{1024, 32}$ & 3072 \\\cline{3-13} 
		& & $(2,3,3)^T$ & 32 & 8 & $1/24$ & $\chi_{-4}$ & 8 & $-1/2$ & - & - & $\Gamma_{1024, 32}$ & 3072 \\\cline{3-13} 
		& & $(4,1,3)^T$\dag & 32 & 4 & $1/24$ & $\chi_{-4}$ & 4 & $-1/4$ & - & - & $\Gamma_{1024, 32}$ & 3072 \\\hline
		
		\multirow{4}{*}{$(1,2,4)$} & \multirow{4}{*}{8} & $(0,1,2)^T$\ddag & 32 & 18 & $1/16$ & $\chi_{-4}$ & 2 & $-1/8$ & 32 & 18 & $ \Gamma_{1024, 32} $ & 3072 \\\cline{3-13}
		& & $(0,3,2)^T$\ddag & 32 & 2 & $1/16$ & $\chi_{-4}$ & 2 & $-1/8$ & 32 & 18 & $ \Gamma_{1024, 32} $ & 3072 \\\cline{3-13}
		&  & $(2,2,1)^T$ & 32 & 16 & $1/24$ & $\chi_{-4}$ & 16 & $1/2$ & - & - & $ \Gamma_{1024, 32} $ & 3072 \\\cline{3-13}
		&  & $(2,2,3)^T$ & 32 & 16 & $1/24$ & $\chi_{-4}$ & 16 & $-1/2$ & - & - & $ \Gamma_{1024, 32} $ & 3072 \\\hline
		
		\multirow{5}{*}{$(1,2,8)^T$} & \multirow{3}{*}{4} & $(0,1,0)^T$ \dag & 16 & 2 & $1/4$ & $\chi_{-2}$ & 2 & $1/2$ & - & - & $\Gamma_{512, 16}$ & 768 \\\cline{3-13} 
		& & $(0,1,2)^T$ \dag & 16 & 2 & $1/4$ & $\chi_{-2}$ & 2 & $-1/2$ & - & - & $\Gamma_{512, 16}$ & 768 \\\cline{3-13} 
		& & $(1,0,1)^T$\ddag & 8 & 1 & $1/16$ & $\chi_{-4}$ & 1 & $-1/8$ & - & - & $\Gamma_{512, 4}$ & 192 \\\cline{2-13} 
		& \multirow{2}{*}{8} & $(4,2,1)^T$ & 64 & 32 & $1/12$ & $\chi_{-4}$ & 32 & 1 & - & - & $\Gamma_{4096, 64}$ & 24576 \\\cline{3-13} 
		& & $(4,2,3)^T$ & 64 & 32 & $1/12$ & $\chi_{-4}$ & 32 & $-1$ & - & - & $\Gamma_{4096, 64}$ & 24576 \\\hline
		
		\multirow{4}{*}{$(1,2,16)^T$} & \multirow{2}{*}{2} & $(1,0,0)^T$\dag & 8 & 1 & $1/2$ & $\chi_{-2}$ & 1 & 1 & - & - & $\Gamma_{256, 2}$ & 128 \\\cline{3-13}
		& & $(1,0,1)^T$\dag & 8 & 1 & $1/2$ & $\chi_{-2}$ & 1 & $-1$ & - & - & $\Gamma_{256, 2}$ & 128 \\\cline{2-13}
		& \multirow{2}{*}{8} & $(4,4,1)^T$ & 128 & 64 & $1/6$ & $\chi_{-4}$ & 64 & 2 & - & - & $\Gamma_{16384, 128}$ & 196608 \\\cline{3-13}
		& & $(4,4,3)^T$ & 128 & 64 & $1/6$ & $\chi_{-4}$ & 64 & $-2$ & - & - & $\Gamma_{16384, 128}$ & 196608 \\\hline 
		
		\multirow{7}{*}{$(1,3,3)^T$} & \multirow{4}{*}{6} & \multirow{2}{*}{$(0,0,1)^T$\dag} & 72 & 3 & $1/8$ & \multirow{2}{*}{$\chi_{-12}$} & \multirow{2}{*}{3} & \multirow{2}{*}{$1/2$} & \multirow{2}{*}{-} & \multirow{2}{*}{-} & \multirow{2}{*}{$\Gamma_{5184,72}$} & \multirow{2}{*}{31104} \\\cline{4-6}
		& & & 72 & 39 & $1/16$ & & & & & & & \\\cline{3-13}
		& & \multirow{2}{*}{$(0,2,3)^T$\dag} & 72 & 3 & $1/8$ & \multirow{2}{*}{$\chi_{-12}$} & \multirow{2}{*}{3} & \multirow{2}{*}{$-1/2$} & \multirow{2}{*}{-} & \multirow{2}{*}{-} & \multirow{2}{*}{$\Gamma_{5184,72}$} & \multirow{2}{*}{31104} \\\cline{4-6}
		& & & 72 & 39 & $1/16$ & & & & & & & \\\cline{2-13}
		& \multirow{3}{*}{12} & \multirow{3}{*}{$(3,3,4)^T$ \ddag} & 144 & 84 & $1/40$ & \multirow{3}{*}{$\chi_{-12}$} & \multirow{3}{*}{12} & \multirow{3}{*}{$-1/4$} & \multirow{3}{*}{-} & \multirow{3}{*}{-} & \multirow{3}{*}{$\Gamma_{82944,288}$} & \multirow{3}{*}{1990656} \\\cline{4-6}
		& & & 288 & 12 & $1/48$ & & & & & & & \\\cline{4-6}
		& & & 288 & 156 & $1/64$ & & & & & & & \\\hline
		\pagebreak
		
		\multirow{18}{*}{$(1,3,9)^T$} & \multirow{4}{*}{2} & \multirow{2}{*}{$(0,0,1)^T$\dag } & 8 & 1 & $1/2$ & \multirow{2}{*}{$\chi_{-12}$} & \multirow{2}{*}{1} & \multirow{2}{*}{$-1$} & \multirow{2}{*}{-} & \multirow{2}{*}{-} & \multirow{2}{*}{$\Gamma_{576,2}$} & \multirow{2}{*}{144} \\\cline{4-6}
		& & & 8 & 5 & $1/4$ & & & & & & & \\\cline{3-13}
		& & \multirow{2}{*}{$(1,0,0)^T$\dag} & 8 & 1 & $1/2$ & \multirow{2}{*}{$\chi_{-12}$} & \multirow{2}{*}{1} & \multirow{2}{*}{$1$} & \multirow{2}{*}{-} & \multirow{2}{*}{-} & \multirow{2}{*}{$\Gamma_{576,2}$} & \multirow{2}{*}{144} \\\cline{4-6}
		& & & 8 & 5 & $1/4$ & & & & & & & \\\cline{2-13}
		& \multirow{6}{*}{4} & \multirow{3}{*}{$(0,1,1)^T$\dag} & 16 & 12 & $1/10$ & \multirow{3}{*}{$\chi_{-12}$} & \multirow{3}{*}{4} & \multirow{3}{*}{$-1/2$} & \multirow{3}{*}{-} & \multirow{3}{*}{-} & \multirow{3}{*}{$\Gamma_{9216,32}$} & \multirow{3}{*}{36864} \\\cline{4-6}	
		& & & 32 & 4 & $1/12$ & & & & & & & \\\cline{4-6}
		& & & 32 & 20 & $1/16$ & & & & & & & \\\cline{3-13}
		& & \multirow{3}{*}{$(1,1,0)^T$\dag} & 16 & 12 & $1/10$ & \multirow{3}{*}{$\chi_{-12}$} & \multirow{3}{*}{4} & \multirow{3}{*}{$1/2$} & \multirow{3}{*}{-} & \multirow{3}{*}{-} & \multirow{3}{*}{$\Gamma_{9216,32}$} & \multirow{3}{*}{36864} \\\cline{4-6}	
		& & & 32 & 4 & $1/12$ & & & & & & & \\\cline{4-6}
		& & & 32 & 20 & $1/16$ & & & & & & & \\\cline{2-13}
		& \multirow{8}{*}{6} & \multirow{4}{*}{$(0,0,1)$ \dag} & 216 & 9 & $1/8$ & \multirow{4}{*}{$\chi_{-12}$} & \multirow{4}{*}{9} & \multirow{4}{*}{$1/2$} & \multirow{4}{*}{-} & \multirow{4}{*}{-} & \multirow{4}{*}{$\Gamma_{46656, 216}$} & \multirow{4}{*}{839808} \\\cline{4-6}
		& & & 216 & 45 & $1/8$ & & & & & & & \\\cline{4-6}
		& & & 216 & 117 & $1/16$ & & & & & & & \\\cline{4-6}
		& & & 216 & 153 & $1/4$ & & & & & & & \\\cline{3-13}
		& & \multirow{4}{*}{$(3,0,2)$ \dag} & 216 & 9 & $1/8$ & \multirow{4}{*}{$\chi_{-12}$} & \multirow{4}{*}{9} & \multirow{4}{*}{$-1/2$} & \multirow{4}{*}{-} & \multirow{4}{*}{-} & \multirow{4}{*}{$\Gamma_{46656, 216}$} & \multirow{4}{*}{839808} \\\cline{4-6}
		& & & 216 & 45 & $1/8$ & & & & & & & \\\cline{4-6}
		& & & 216 & 117 & $1/16$ & & & & & & & \\\cline{4-6}
		& & & 216 & 153 & $1/4$ & & & & & & & \\\hline
		
		\multirow{12}{*}{$(1,3,12)^T$} & \multirow{4}{*}{3} & \multirow{2}{*}{$(0,0,1)^T$\dag} & \multirow{2}{*}{9} & \multirow{2}{*}{3} & \multirow{2}{*}{$1/4$} & $\chi_{-12}$ & 3 & $-1/2$ & \multirow{2}{*}{-} & \multirow{2}{*}{-} & \multirow{2}{*}{$\Gamma_{1296,9}$} & \multirow{2}{*}{1944} \\\cline{7-9}
		& & & & & & $\chi_{-3}$ & 12 & $-1$ & & & & \\\cline{3-13}
		& & \multirow{2}{*}{$(0,1,0)^T$ \dag} & \multirow{2}{*}{9} & \multirow{2}{*}{3} & \multirow{2}{*}{$1/4$} & $\chi_{-12}$ & 3 & $1/2$ & \multirow{2}{*}{-} & \multirow{2}{*}{-} & \multirow{2}{*}{$\Gamma_{1296,9}$} & \multirow{2}{*}{1944} \\\cline{7-9}
		& & & & & & $\chi_{-3}$ & 12 & $1$ & & & & \\\cline{2-13}
		& \multirow{6}{*}{6} & \multirow{3}{*}{$(3,1,0)^T$ \dag} & 144 & 84 & $1/6$ & \multirow{3}{*}{$\chi_{-12}$} & \multirow{3}{*}{12} & \multirow{3}{*}{1} & \multirow{3}{*}{-} & \multirow{3}{*}{-} & \multirow{3}{*}{$\Gamma_{82944,288}$} & \multirow{3}{*}{1990656} \\\cline{4-6}
		& & & 288 & 12 & $1/8$ & & & & & & & \\\cline{4-6}
		& & & 288 & 156 & $1/12$ & & & & & & & \\\cline{3-13}
		& & \multirow{3}{*}{$(3,3,2)^T$ \dag} & 144 & 84 & $1/6$ & \multirow{3}{*}{$\chi_{-12}$} & \multirow{3}{*}{12} & \multirow{3}{*}{$-1$} & \multirow{3}{*}{-} & \multirow{3}{*}{-} & \multirow{3}{*}{$\Gamma_{82944,288}$} & \multirow{3}{*}{1990656} \\\cline{4-6}
		& & & 288 & 12 & $1/8$ & & & & & & & \\\cline{4-6}
		& & & 288 & 156 & $1/12$ & & & & & & & \\\cline{2-13}
		& \multirow{2}{*}{12} & \multirow{2}{*}{$(0,4,3)^T$\dag } & 288 & 12 & $1/32$ & \multirow{2}{*}{$\chi_{-12}$} & \multirow{2}{*}{12} & \multirow{2}{*}{$-1/4$} & \multirow{2}{*}{-} & \multirow{2}{*}{-} & \multirow{2}{*}{$\Gamma_{82944,288}$} & \multirow{2}{*}{1990656} \\\cline{4-6}
		& & & 288 & 156 & $1/48$ & & & & & & & \\\hline
		
		\multirow{7}{*}{$(1,4,4)^T$} & \multirow{2}{*}{2} & $(1,0,0)^T$\dag & 8 & 1 & $1/2$ & $\chi_{-4}$ & 1 & 1 & - & - & $\Gamma_{64,2}$ & 12 \\\cline{3-13}\nopagebreak
		& & $(1,1,1)^T$\dag & 8 & 1 & $1/2$ & $\chi_{-4}$ & 1 & $-1$ & - & - & $\Gamma_{64,2}$ & 12 \\\cline{2-13}\nopagebreak
		& \multirow{3}{*}{4} & $(0,0,1)^T$\dag & 16 & 4 & $1/12$ & $\chi_{-4}$ & 4 & $1/2$ & - & - & $\Gamma_{256,16}$ & 384 \\\cline{3-13}\nopagebreak
		& & $(0,2,1)^T$\dag & 16 & 4 & $1/12$ & $\chi_{-4}$ & 4 & $-1/2$ & - & - & $\Gamma_{256,16}$ & 384 \\\cline{3-13}\nopagebreak
		& & $(1,1,1)^T$\dag & 8 & 1 & $1/16$ & $\chi_{-4}$ & 1 & $-1/8$ & - & - & $\Gamma_{256,4}$ & 96 \\\cline{2-13} \nopagebreak
		& \multirow{2}{*}{8} & $(4,1,2)^T$\dag & 64 & 36 & $1/24$ & $\chi_{-4}$ & 4 & $-1/4$ & 64 & 36 & $\Gamma_{4096, 64}$ & 24576 \\\cline{3-13}\nopagebreak
		& & $(4,3,2)^T$\dag & 64 & 4 & $1/24$ & $\chi_{-4}$ & 4 & $-1/4$ & 64 & 36 & $\Gamma_{4096, 64}$ & 24576 \\\hline 
		
		\multirow{6}{*}{$(1,4,8)^T$} & \multirow{2}{*}{4} & $(2,1,0)^T$\dag & 32 & 8 & $1/6$ & $\chi_{-4}$ & 8 & 1 & - & - & $\Gamma_{1024, 32}$ & 3072 \\\cline{3-13}
		& & $(2,1,2)^T$\dag & 32 & 8 & $1/6$ & $\chi_{-4}$ & 8 & $-1$ & - & - & $\Gamma_{1024, 32}$ & 3072 \\\cline{2-13}
		& \multirow{4}{*}{16} & $(0,4,1)$ & 256 & 72 & $1/48$ & $\chi_{-4}$ & 8 & $-1/8$ & - & - & $\Gamma_{65536, 256}$ & 1572864 \\\cline{3-13}
		& & $(0,4,3)$ & 256 & 136 & $1/48$ & $\chi_{-4}$ & 8 & $-1/8$ & - & - & $\Gamma_{65536, 256}$ & 1572864 \\\cline{3-13}
		& & $(0,4,5)$ & 256 & 8 & $1/48$ & $\chi_{-4}$ & 8 & $-1/8$ & - & - & $\Gamma_{65536, 256}$ & 1572864 \\\cline{3-13}
		& & $(0,4,7)$ & 256 & 200 & $1/48$ & $\chi_{-4}$ & 8 & $-1/8$ & - & - & $\Gamma_{65536, 256}$ & 1572864 \\\hline
		
		\multirow{6}{*}{$(1,4,12)^T$} & \multirow{6}{*}{12} & \multirow{3}{*}{$(6,0,1)^T$} & 576 & 48 & $1/10$ & \multirow{3}{*}{$ \chi_{-12} $} & \multirow{3}{*}{48} & \multirow{3}{*}{1} & \multirow{3}{*}{-} & \multirow{3}{*}{-} & \multirow{3}{*}{$\Gamma_{1327104,1152}$} & \multirow{3}{*}{127401984} \\\cline{4-6}
		& & & 1152 & 336 & $1/16$ & & & & & & & \\\cline{4-6}
		& & & 1152 & 912 & $1/12$ & & & & & & & \\\cline{3-13}
		& & \multirow{3}{*}{$(6,0,5)^T$} & 576 & 48 & $1/10$ & \multirow{3}{*}{$ \chi_{-12} $} & \multirow{3}{*}{48} & \multirow{3}{*}{$-1$} & \multirow{3}{*}{-} & \multirow{3}{*}{-} & \multirow{3}{*}{$\Gamma_{1327104,1152}$} & \multirow{3}{*}{127401984} \\\cline{4-6}
		& & & 1152 & 336 & $1/16$ & & & & & & & \\\cline{4-6}
		& & & 1152 & 912 & $1/12$ & & & & & & & \\\hline
		
		\multirow{2}{*}{$(1,6,6)^T$} & \multirow{2}{*}{12} & \multirow{2}{*}{$(0,1,5)^T$\dag} & 288 & 12 & $1/16$ & \multirow{2}{*}{$\chi_{-12}$} & \multirow{2}{*}{12} & \multirow{2}{*}{$-1/4$} & \multirow{2}{*}{-} & \multirow{2}{*}{-} & \multirow{2}{*}{$\Gamma_{82944,288}$} & \multirow{2}{*}{1990656} \\\cline{4-6}
		& & & 288 & 156 & $1/32$ & & & & & & & \\\hline
		\pagebreak
		
		\multirow{4}{*}{$(1,8,8)^T$} & 2 & $(1,0,1)^T$\dag & 8 & 1 & $1/4$ & $\chi_{-4}$ & 1 & $-1/2$ & - & - & $\Gamma_{128,2}$ & 24 \\\cline{2-13} 
		& \multirow{3}{*}{8} & $(0,1,3)^T$\dag & 64 & 16 & $1/24$ & $\chi_{-4}$ & 16 & $-1/4$ & - & - & $\Gamma_{4096, 64}$ & 24576 \\\cline{3-13}
		& & $(4,1,1)^T$ & 128 & 32 & $1/12$ & $\chi_{-4}$ & 32 & $1$ & - & - & $\Gamma_{16384, 128}$ & 196608 \\\cline{3-13}
		& & $(4,3,3)^T$ & 128 & 32 & $1/12$ & $\chi_{-4}$ & 32 & $-1$ & - & - & $\Gamma_{16384, 128}$ & 196608 \\\hline 
		
		\multirow{2}{*}{$(1,8,16)$} & \multirow{2}{*}{8} & $(4,2,1)^T$ & 128 & 64 & $1/12$ & $\chi_{-4}$ & 64 & 1 & - & - & $\Gamma_{16384, 128}$ & 196608\\\cline{3-13}
		& & $(4,2,3)^T$ & 128 & 64 & $1/12$ & $\chi_{-4}$ & 64 & $-1$ & - & - & $\Gamma_{16384, 128}$ & 196608\\\hline
		
		\multirow{3}{*}{$(1,12,12)^T$} & \multirow{3}{*}{12} & \multirow{3}{*}{$(6,3,4)^T$\dag} & 576 & 336 & $1/20$ & \multirow{3}{*}{$\chi_{-12}$} & \multirow{3}{*}{48} & \multirow{3}{*}{$-1/2$} & \multirow{3}{*}{-} & \multirow{3}{*}{-} & \multirow{3}{*}{$\Gamma_{1327104, 1152}$} & \multirow{3}{*}{127401984} \\\cline{4-6}
		& & & 1152 & 48 & $1/24$ & & & & & & & \\\cline{4-6}
		& & & 1152 & 624 & $1/32$ & & & & & & & \\\hline
		
		\multirow{14}{*}{$(3,4,12)^T$} & \multirow{2}{*}{3} & $(0,0,1)^T$\dag & 9 & 3 & $1/4$ & $\chi_{-12}$ & 3 & $-1/2$ & - & - & $\Gamma_{1296,9}$ & 1944\\\cline{3-13}
		&  & $(1,0,0)^T $\dag & 9 & 3 & $1/4$ & $\chi_{-12}$ & 3 & $1/2$ & - & - & $\Gamma_{1296,9}$ & 1944\\\cline{2-13}
		& \multirow{6}{*}{6} & \multirow{3}{*}{$(0,0,1)^T$\dag} & 144 & 120 & $1/4$ & \multirow{3}{*}{$\chi_{-12}$} & \multirow{3}{*}{12} & \multirow{3}{*}{$1/2$} & \multirow{3}{*}{-} & \multirow{3}{*}{-} & \multirow{3}{*}{$\Gamma_{82944, 288}$} & \multirow{3}{*}{1990656} \\\cline{4-6}
		& & & 288 & 12 & $1/8$ & & & & & & & \\\cline{4-6}
		& & & 288 & 156 & $1/16$ & & & & & & & \\\cline{3-13}
		& & \multirow{3}{*}{$(2,0,3)^T$\dag} & 144 & 120 & $1/4$ & \multirow{3}{*}{$\chi_{-12}$} & \multirow{3}{*}{12} & \multirow{3}{*}{$-1/2$} & \multirow{3}{*}{-} & \multirow{3}{*}{-} & \multirow{3}{*}{$\Gamma_{82944, 288}$} & \multirow{3}{*}{1990656} \\\cline{4-6}
		& & & 288 & 12 & $1/8$ & & & & & & & \\\cline{4-6}
		& & & 288 & 156 & $1/16$ & & & & & & & \\\cline{2-13}
		& \multirow{6}{*}{12} & \multirow{3}{*}{$(2,3,0)^T$\dag} & 576 & 336 & $1/20$ & \multirow{3}{*}{$\chi_{-12}$} & \multirow{3}{*}{48} & \multirow{3}{*}{$1/2$} & \multirow{3}{*}{-} & \multirow{3}{*}{-} & \multirow{3}{*}{$\Gamma_{1327104, 576}$} & \multirow{3}{*}{127401984} \\\cline{4-6}
		& & & 1152 & 48 & $1/24$ & & & & & & & \\\cline{4-6}
		& & & 1152 & 624 & $1/32$ & & & & & & & \\\cline{3-13}
		& & \multirow{3}{*}{$(6,3,4)^T$\dag} & 576 & 336 & $1/20$ & \multirow{3}{*}{$\chi_{-12}$} & \multirow{3}{*}{48} & \multirow{3}{*}{$-1/2$} & \multirow{3}{*}{-} & \multirow{3}{*}{-} & \multirow{3}{*}{$\Gamma_{1327104, 576}$} & \multirow{3}{*}{127401984} \\\cline{4-6}
		& & & 1152 & 48 & $1/24$ & & & & & & & \\\cline{4-6}
		& & & 1152 & 624 & $1/32$ & & & & & & & \\\hline
		
		\multirow{6}{*}{$(3,4,36)^T$} & \multirow{6}{*}{4} & \multirow{3}{*}{$(2,0,1)^T$\dag} & 64 & 48 & $1/5$ & \multirow{3}{*}{$\chi_{-12}$} & \multirow{3}{*}{16} & \multirow{3}{*}{$-1$} & \multirow{3}{*}{-} & \multirow{3}{*}{-} & \multirow{3}{*}{$\Gamma_{147456,128}$} & \multirow{3}{*}{2359296} \\\cline{4-6}
		& & & 128 & 16 & $1/6$ & & & & & & & \\\cline{4-6}
		& & & 128 & 80 & $1/8$ & & & & & & & \\\cline{3-13}
		& & \multirow{3}{*}{$(2,1,0)^T$ \dag} & 64 & 48 & $1/5$ & \multirow{3}{*}{$\chi_{-12}$} & \multirow{3}{*}{16} & \multirow{3}{*}{$1$} & \multirow{3}{*}{-} & \multirow{3}{*}{-} & \multirow{3}{*}{$\Gamma_{147456,128}$} & \multirow{3}{*}{2359296} \\\cline{4-6}
		& & & 128 & 16 & $1/6$ & & & & & & & \\\cline{4-6}
		& & & 128 & 80 & $1/8$ & & & & & & & \\\hline
	\end{longtable}
	\end{footnotesize}
}
	\section{Genus Information for Lattice Cosets Found}\label{sctn::gen_info}
	Table \ref{table::genus_info} describes the genus of the lattice cosets listed previously in Table \ref{table::list_spn1_identities}. All of the values occurring in equation (\ref{eqn::mass_formula}) are given in this table, which helps to prove Proposition \ref{prop::classify_genus_all_listed}.
	
	As usual, the coordinates of $\vec h_1$ and $\vec h_2$ are given in the standard basis $\{\vec e_1, \vec e_2, \vec e_3\}$ for $V = \Q^3$. Here, the matrix $X_{i,p }$ sends $\vec h_0$ to $\vec h_i$ modulo $q$, where $q$ is as in Proposition \ref{prop::matrix_reduction_of_isometries}. Here, $G_p$ and $H_p$ is shorthand for the groups $G_{L,q',q}$ and $H_{L, \vec h_0, q', q}$ from Proposition \ref{prop::matrix_reduction_of_isometries}. The sizes of the two groups $|G_{L,q',q}|$ and $|H_{L,\vec h_1, q',q'}|$ are computed by a computer. Since some of these computations were too large for the computer, this table does not exhaust all of the lattice cosets given in Table \ref{table::list_spn1_identities} in Appendix \ref{sctn::list_spn1_identities}. In particular, the following cosets are missing:
	\begin{multline*}
		(\vec a, \vec h, N)\in \Big\{  \left((1,4,8)^T, (0,4,1)^T, 16 \right), \left((1,4,8)^T, (0,4,3)^T, 16 \right), \left((1,4,8)^T, (0,4,5)^T, 16 \right), \\ 
		\left((1,4,8)^T, (0,4,7)^T, 16 \right), \left((1,8,16)^T, (4,2,1)^T, 8 \right), \left((1,8,16)^T, (4,2,3)^T, 8 \right) \Big\}.
	\end{multline*}

	The code used to evaluate $G_p$ and $H_p$ may be found on the author's GitHub page.
	\newpage 
	{
		\renewcommand{\arraystretch}{1.3}
	\begin{center}
	\begin{footnotesize}
	\begin{longtable}[c]{|c|c|c|c|c|c|c|c|c|}
		\caption[]{\normalsize{Genus information for $\gen^+(NL_{\vec a}+\vec h_0)$}} \label{table::genus_info}\\
		\hline 
		$\vec a$ & $N$ & $\vec h_0$ & $\vec h_1$ & $\vec h_2$ & $\vec h_3$ & $\vec h_4$ & $\vec h_5$ & Misc. Genus Info\\\hline \hline 
		\endfirsthead 
		\hline 
		$\vec a$ & $N$ & $\vec h_0$ & $\vec h_1$ & $\vec h_2$ & $\vec h_3$ & $\vec h_4$ & $\vec h_5$ & Misc. Genus Info\\\hline \hline 
		\endhead 
		
		$(1,1,1)^T$ & 4 & $(0,0,1)^T$ & $(2,2,1)^T$ & - & - & - & - & \\\cline{1-8}
		\multicolumn{2}{|c|}{$|O^+(NL+\vec h_i)|$} & 4 & 4 & - & - & - & - & $|O^+(L)|=24$ \\ \cline{1-8}
		\multicolumn{3}{|c|}{$ X_{i,2} \in G_{L,8,4}, \;X_{i,2} \vec h_0 \equiv \vec h_i$} & \rule{0pt}{14pt} $\left(\begin{smallmatrix} 2&2&1 \\1&2&2 \\2&1&2 \end{smallmatrix} \right)$ & - & - & - & - & \small{$|G_2|=48, |H_2| = 4$}\\[3pt]\hline 
		
		$(1,1,1)^T$ & 8 & $(1,2,2)^T$ & $(3,0,0)^T$ & $(3,4,0)^T$ & $(3,4,4)^T$ & - & - & \\\cline{1-8}
		\multicolumn{2}{|c|}{$|O^+(NL+\vec h_i)|$} & 1 & 4 & 2 & 4 & - & - & $|O^+(L)|=24$ \\ \cline{1-8}
		\multicolumn{3}{|c|}{$ X_{i,2} \in G_{L,16,8}, \;X_{i,2} \vec h_0 \equiv \vec h_i$} & \rule{0pt}{14pt} $ \left(\begin{smallmatrix} 3&6&6\\ 2&2&5 \\ 2&5&2 \end{smallmatrix} \right)$ & $ \left(\begin{smallmatrix} 3&2&6\\ 2&3&2 \\ 2&6&5 \end{smallmatrix} \right)$ & $ \left(\begin{smallmatrix} 3&2&2\\ 2&2&3 \\ 2&3&2 \end{smallmatrix} \right)$ & - & - & \small{$|G_2|=384, |H_2| = 8$}\\[3pt] \hline\hline  
		
		$(1,1,1)^T$ & 8 & $(3,2,2)^T$ & $(1,0,0)^T$ & $(1,4,0)^T$ & $(1,4,4)^T$ & - & - & \\\cline{1-8}
		\multicolumn{2}{|c|}{$|O^+(NL+\vec h_i)|$} & 1 & 4 & 2 & 4 & - & - & $|O^+(L)|=24$ \\ \cline{1-8}
		\multicolumn{3}{|c|}{$ X_{i,2} \in G_{L,16,8}, \;X_{i,2} \vec h_0 \equiv \vec h_i$} & \rule{0pt}{14pt}$ \left(\begin{smallmatrix} 3&6&6\\ 2&2&5 \\ 2&5&2 \end{smallmatrix} \right)$ & $ \left(\begin{smallmatrix} 3&2&6\\ 2&3&2 \\ 2&6&5 \end{smallmatrix} \right)$ & $ \left(\begin{smallmatrix} 3&2&2\\ 2&2&3 \\ 2&3&2 \end{smallmatrix} \right)$ & - & - & \small{$|G_2|=384, |H_2| = 8$} \\[3pt] \hline\hline 
		
		$(1,1,2)^T$ & 4 & $(1,1,0)^T$ & $(1,1,2)^T$ & - & - & - & - & \\\cline{1-8}
		\multicolumn{2}{|c|}{$|O^+(NL+\vec h_i)|$} & 2 & 2 & - & - & - & - & $|O^+(L)|=8$ \\ \cline{1-8}
		\multicolumn{3}{|c|}{$ X_{i,2} \in G_{L,16,4}, \;X_{i,2} \vec h_0 \equiv \vec h_i$} & \rule{0pt}{14pt} $\left(\begin{smallmatrix} 1&0&0 \\0&1&0 \\0&2&1 \end{smallmatrix} \right)$ & - & - & - & - & \small{$|G_2|=32, |H_2| = 4$}\\[3pt] \hline\hline  
		
		$(1,1,2)^T$ & 8 & $(0,4,1)^T$ & $(0,0,3)^T$ & $(4,4,3)^T$ & - & - & - & \\\cline{1-8}
		\multicolumn{2}{|c|}{$|O^+(NL+\vec h_i)|$} & 2 & 4 & 4 & - & - & - & $|O^+(L)|=8$ \\ \cline{1-8}
		\multicolumn{3}{|c|}{$ X_{i,2} \in G_{L,32,8}, \;X_{i,2} \vec h_0 \equiv \vec h_i$} & \rule{0pt}{14pt} $ \left(\begin{smallmatrix} 1&0&0\\ 0&3&4 \\ 4&6&3 \end{smallmatrix} \right)$ & $ \left(\begin{smallmatrix} 3&0&4\\ 0&1&0 \\ 2&0&3 \end{smallmatrix} \right)$ & - & - & - & \small{$|G_2|=256, |H_2| = 32$} \\[3pt] \hline\hline 
		
		$(1,1,2)^T$ & 8 & $(0,4,3)^T$ & $(0,0,1)^T$ & $(4,4,1)^T$ & - & - & - & \\\cline{1-8}
		\multicolumn{2}{|c|}{$|O^+(NL+\vec h_i)|$} & 2 & 4 & 4 & - & - & - & $|O^+(L)|=8$ \\ \cline{1-8}
		\multicolumn{3}{|c|}{$ X_{i,2} \in G_{L,32,8}, \;X_{i,2} \vec h_0 \equiv \vec h_i$} & \rule{0pt}{14pt} $ \left(\begin{smallmatrix} 1&0&0\\ 0&3&4 \\ 4&6&3 \end{smallmatrix} \right)$ & $ \left(\begin{smallmatrix} 3&0&4\\ 0&1&0 \\ 2&0&3 \end{smallmatrix} \right)$ & - & - & - & \small{$|G_2|=256, |H_2| = 32$} \\[3pt] \hline\hline 
		
		$(1,1,2)^T$ & 8 & $(1,3,2)^T$ & $(1,5,2)^T$ & $(1,1,0)^T$ & $(1,1,4)^T$ & $(3,3,0)^T$ & $(3,3,4)^T$ & \\\cline{1-8}
		\multicolumn{2}{|c|}{$|O^+(NL+\vec h_i)|$} & 1 & 1 & 2 & 2 & 2 & 2 & $|O^+(L)|=8$ \\ \cline{1-8}
		\multicolumn{3}{|c|}{$ X_{i,2} \in G_{L,32,8}, \;X_{i,2} \vec h_0 \equiv \vec h_i$} & \rule{0pt}{14pt} $ \left(\begin{smallmatrix} 1&0&0\\ 0&7&0 \\ 0&4&7 \end{smallmatrix} \right)$ & $ \left(\begin{smallmatrix} 1&0&0\\ 0&3&4 \\ 0&6&3 \end{smallmatrix} \right)$ & $ \left(\begin{smallmatrix} 1&0&0\\ 0&3&4 \\ 0&2&3 \end{smallmatrix} \right)$ & $ \left(\begin{smallmatrix} 3&0&4\\ 0&1&0 \\ 2&0&3 \end{smallmatrix} \right)$ & $ \left(\begin{smallmatrix} 3&0&4\\ 0&1&0 \\ 2&4&3 \end{smallmatrix} \right)$ & \small{$|G_2|=256, |H_2| = 8$} \\[3pt] \hline\hline 
		
		$(1,1,3)^T$ & 12 & $(0,3,1)^T$ & $(0,3,5)^T$ & - & - & - & - & \\\cline{1-8}
		\multicolumn{2}{|c|}{$|O^+(NL+\vec h_i)|$} & 1 & 1 & - & - & - & - & $|O^+(L)|=8$ \\ \cline{1-8}
		\multicolumn{3}{|c|}{$ X_{i,2} \in G_{L,8,4}, \;X_{i,2} \vec h_0 \equiv \vec h_i$} & \rule{0pt}{14pt} $ \left(\begin{smallmatrix} 1&0&0\\ 0&1&0 \\ 0&0&1 \end{smallmatrix} \right)$ & - & - & - & - & \small{$|G_2|=16, |H_2| = 2$}\\[3pt] \cline{1-8}  
		\multicolumn{3}{|c|}{$ X_{i,3} \in G_{L,9,3}, \;X_{i,3} \vec h_0 \equiv \vec h_i$} & \rule{0pt}{14pt} $ \left(\begin{smallmatrix} 1&0&0\\ 0&2&0 \\ 0&0&2 \end{smallmatrix} \right)$ & - & - & - & - & \small{$|G_3|=72, |H_3| = 36$}\\[3pt] \hline\hline  
		
		$(1,1,4)^T$ & 2 & $(0,1,0)^T$ & $(0,1,1)^T$ & - & - & - & - & \\\cline{1-8}
		\multicolumn{2}{|c|}{$|O^+(NL+\vec h_i)|$} & 4 & 4 & - & - & - & - & $|O^+(L)|=8$ \\ \cline{1-8}
		\multicolumn{3}{|c|}{$ X_{i,2} \in G_{L,32,2}, \;X_{i,2} \vec h_0 \equiv \vec h_i$} & \rule{0pt}{14pt} $\left(\begin{smallmatrix} 1&0&0 \\0&1&0 \\1&1&1 \end{smallmatrix} \right)$ & - & - & - & - & \small{$|G_2|=4, |H_2| = 1$}\\[3pt] \hline\hline  
		
		$(1,1,4)^T$ & 4 & $(1,2,1)^T$ & $(1,0,0)^T$ & $(1,0,2)^T$ & - & - & - & \\\cline{1-8}
		\multicolumn{2}{|c|}{$|O^+(NL+\vec h_i)|$} & 1 & 2 & 2 & - & - & - & $|O^+(L)|=8$ \\ \cline{1-8}
		\multicolumn{3}{|c|}{$ X_{i,2} \in G_{L,32,4}, \;X_{i,2} \vec h_0 \equiv \vec h_i$} & \rule{0pt}{14pt} $ \left(\begin{smallmatrix} 1&2&0\\ 2&3&0 \\ 3&3&3 \end{smallmatrix} \right)$ & $ \left(\begin{smallmatrix} 1&2&0\\ 2&3&0 \\ 1&1&3 \end{smallmatrix} \right)$ & - & - & - & \small{$|G_2|=64, |H_2| = 4$} \\[3pt] \hline\hline 
		
		$(1,1,4)^T$ & 8 & $(0,0,1)^T$ & $(4,4,3)^T$ & - & - & - & - & \\\cline{1-8}
		\multicolumn{2}{|c|}{$|O^+(NL+\vec h_i)|$} & 4 & 4 & - & - & - & - & $|O^+(L)|=8$ \\ \cline{1-8}
		\multicolumn{3}{|c|}{$ X_{i,2} \in G_{L,64,8}, \;X_{i,2} \vec h_0 \equiv \vec h_i$} & \rule{0pt}{14pt} $\left(\begin{smallmatrix} 3&6&4 \\2&5&4 \\5&5&3 \end{smallmatrix} \right)$ & - & - & - & - & \small{$|G_2|=512, |H_2| = 128$}\\[3pt] \hline\hline  
		
		$(1,1,4)^T$ & 8 & $(0,0,3)^T$ & $(4,4,1)^T$ & - & - & - & - & \\\cline{1-8}
		\multicolumn{2}{|c|}{$|O^+(NL+\vec h_i)|$} & 4 & 4 & - & - & - & - & $|O^+(L)|=8$ \\ \cline{1-8}
		\multicolumn{3}{|c|}{$ X_{i,2} \in G_{L,64,8}, \;X_{i,2} \vec h_0 \equiv \vec h_i$} & \rule{0pt}{14pt} $\left(\begin{smallmatrix} 3&6&4 \\2&5&4 \\5&5&3 \end{smallmatrix} \right)$ & - & - & - & - & \small{$|G_2|=512, |H_2| = 128$}\\[3pt] \hline\pagebreak   
		
		$(1,1,8)^T$ & 2 & $(1,1,0)^T$ & $(1,1,1)^T$ & - & - & - & - & \\\cline{1-8}
		\multicolumn{2}{|c|}{$|O^+(NL+\vec h_i)|$} & 8 & 8 & - & - & - & - & $|O^+(L)|=8$ \\ \cline{1-8}
		\multicolumn{3}{|c|}{$ X_{i,2} \in G_{L,64,2}, \;X_{i,2} \vec h_0 \equiv \vec h_i$} & \rule{0pt}{14pt} $\left(\begin{smallmatrix} 1&0&0 \\0&1&0 \\1&1&1 \end{smallmatrix} \right)$ & - & - & - & - & \small{$|G_2|=8, |H_2| = 4$}\\[3pt] \hline\hline  
		
		$(1,1,8)^T$ & 4 & $(1,1,1)^T$ & $(1,1,0)^T$ & $(1,1,2)^T$ & - & - & - & \\\cline{1-8}
		\multicolumn{2}{|c|}{$|O^+(NL+\vec h_i)|$} & 1 & 2 & 2 & - & - & - & $|O^+(L)|=8$ \\ \cline{1-8}
		\multicolumn{3}{|c|}{$ X_{i,2} \in G_{L,64,4}, \;X_{i,2} \vec h_0 \equiv \vec h_i$} & \rule{0pt}{14pt} $ \left(\begin{smallmatrix} 1&0&0\\ 0&1&0 \\ 0&3&1 \end{smallmatrix} \right)$ & $ \left(\begin{smallmatrix} 1&0&0\\ 0&1&0 \\ 1&0&1 \end{smallmatrix} \right)$ & - & - & - & \small{$|G_2|=128, |H_2| = 8$} \\[3pt] \hline
		\hline 
		
		$(1,1,8)^T$ & 8 & $(0,0,1)^T$ & $(0,0,3)^T$ & - & - & - & - & \\\cline{1-8}
		\multicolumn{2}{|c|}{$|O^+(NL+\vec h_i)|$} & 4 & 4 & - & - & - & - & $|O^+(L)|=8$ \\ \cline{1-8}
		\multicolumn{3}{|c|}{$ X_{i,2} \in G_{L,128,8}, \;X_{i,2} \vec h_0 \equiv \vec h_i$} &\rule{0pt}{14pt}  $ \left(\begin{smallmatrix} 1&0&0\\ 0&3&0 \\ 0&1&3 \end{smallmatrix} \right)$ & - & - & - & - & \small{$|G_2|=1024, |H_2| = 256$} \\[3pt] \hline \hline   
		
		$(1,2,2)^T$ & 4 & $(1,0,2)^T$ & $(1,0,0)^T$ & $(1,2,2)^T$ & - & - & - & \\\cline{1-8}
		\multicolumn{2}{|c|}{$|O^+(NL+\vec h_i)|$} & 2 & 4 & 4 & - & - & - & $|O^+(L)|=8$ \\ \cline{1-8}
		\multicolumn{3}{|c|}{$ X_{i,2} \in G_{L,16,4}, \;X_{i,2} \vec h_0 \equiv \vec h_i$} &\rule{0pt}{14pt}  $ \left(\begin{smallmatrix} 1&0&0\\ 2&1&0 \\ 2&0&1 \end{smallmatrix} \right)$ & $ \left(\begin{smallmatrix} 1&0&0\\ 2&1&0 \\ 0&0&1 \end{smallmatrix} \right)$ & - & - & - & \small{$|G_2|=32, |H_2| = 4$} \\[3pt] \hline\hline   
		
		$(1,2,2)^T$ & 8 & $(2,1,1)^T$ & $(2,3,3)^T$ & - & - & - & - & \\\cline{1-8}
		\multicolumn{2}{|c|}{$|O^+(NL+\vec h_i)|$} & 1 & 1 & - & - & - & - & $|O^+(L)|=8$ \\ \cline{1-8}
		\multicolumn{3}{|c|}{$ X_{i,2} \in G_{L,32,8}, \;X_{i,2} \vec h_0 \equiv \vec h_i$} & \rule{0pt}{14pt} $ \left( \begin{smallmatrix} 1&0&0\\ 0&7&4\\ 0&4&7 \end{smallmatrix} \right)$ & - & - & - & - & \small{$|G_2|=256, |H_2| = 16$} \\[3pt] \hline\hline 	
		
		$(1,2,2)^T$ & 8 & $(4,1,3)^T$ & $(0,1,1)^T$ & $(0,3,3)^T$ & - & - & - & \\\cline{1-8}
		\multicolumn{2}{|c|}{$|O^+(NL+\vec h_i)|$} & 1 & 2 & 2 & - & - & - & $|O^+(L)|=8$ \\ \cline{1-8}
		\multicolumn{3}{|c|}{$ X_{i,2} \in G_{L,32,8}, \;X_{i,2} \vec h_0 \equiv \vec h_i$} & \rule{0pt}{14pt} $ \left(\begin{smallmatrix} 3&0&4\\ 0&1&0 \\ 2&0&3 \end{smallmatrix} \right)$ & $ \left(\begin{smallmatrix} 3&0&4\\ 0&7&4 \\ 2&4&5 \end{smallmatrix} \right)$ & - & - & - & \small{$|G_2|=256, |H_2| = 16$} \\[3pt] \hline\hline 
		
		$(1,2,4)^T$ & 8 & $(0,1,2)^T$ & $(4,1,0)^T$ & $(4,1,4)^T$ & $(0,3,0)^T$ & $(0,3,4)^T$ & $(4,3,2)^T$ & \\\cline{1-8}
		\multicolumn{2}{|c|}{$|O^+(NL+\vec h_i)|$} & 1 & 2 & 2 & 2 & 2 & 1 & $|O^+(L)|=4$ \\ \cline{1-8}
		\multicolumn{3}{|c|}{$ X_{i,2} \in G_{L,64,8}, \;X_{i,2} \vec h_0 \equiv \vec h_i$} & \rule{0pt}{14pt} $ \left(\begin{smallmatrix} 3&4&0\\ 2&1&4 \\ 0&2&3 \end{smallmatrix} \right)$ & $ \left(\begin{smallmatrix} 3&4&0\\ 2&1&4 \\ 0&6&3 \end{smallmatrix} \right)$ & $ \left(\begin{smallmatrix} 1&0&0\\ 0&3&4 \\ 0&2&3 \end{smallmatrix} \right)$ & $ \left(\begin{smallmatrix} 1&0&0\\ 0&3&4 \\ 0&6&3 \end{smallmatrix} \right)$ & $ \left(\begin{smallmatrix} 3&4&0\\ 6&3&0 \\ 0&0&1 \end{smallmatrix} \right)$ & \small{$|G_2|=256, |H_2| = 16$} \\[3pt] \hline\hline 
		
		$(1,2,4)^T$ & 8 & $(0,3,2)^T$ & $(4,3,0)^T$ & $(4,3,4)^T$ & $(0,1,0)^T$ & $(0,1,4)^T$ & $(4,1,2)^T$ & \\\cline{1-8}
		\multicolumn{2}{|c|}{$|O^+(NL+\vec h_i)|$} & 1 & 2 & 2 & 2 & 2 & 1 & $|O^+(L)|=4$ \\ \cline{1-8}
		\multicolumn{3}{|c|}{$ X_{i,2} \in G_{L,64,8}, \;X_{i,2} \vec h_0 \equiv \vec h_i$} & \rule{0pt}{14pt} $ \left(\begin{smallmatrix} 3&4&0\\ 2&1&4 \\ 0&2&3 \end{smallmatrix} \right)$ & $ \left(\begin{smallmatrix} 3&4&0\\ 2&1&4 \\ 0&6&3 \end{smallmatrix} \right)$ & $ \left(\begin{smallmatrix} 1&0&0\\ 0&3&4 \\ 0&2&3 \end{smallmatrix} \right)$ & $ \left(\begin{smallmatrix} 1&0&0\\ 0&3&4 \\ 0&6&3 \end{smallmatrix} \right)$ & $ \left(\begin{smallmatrix} 3&4&0\\ 6&3&0 \\ 0&0&1 \end{smallmatrix} \right)$ & \small{$|G_2|=256, |H_2| = 16$} \\[3pt] \hline\hline 
		
		$(1,2,4)^T$ & 8 & $(2,2,1)^T$ & $(2,2,3)^T$ & $(2,2,5)^T$ & $(2,2,7)^T$ & - & - & \\\cline{1-8}
		\multicolumn{2}{|c|}{$|O^+(NL+\vec h_i)|$} & 1 & 1 & 1 & 1 & - & - & $|O^+(L)|=4$ \\ \cline{1-8}
		\multicolumn{3}{|c|}{$ X_{i,2} \in G_{L,64,8}, \;X_{i,2} \vec h_0 \equiv \vec h_i$} & \rule{0pt}{14pt} $ \left(\begin{smallmatrix} 1&0&0\\ 0&3&4 \\ 2&2&3 \end{smallmatrix} \right)$ & $ \left(\begin{smallmatrix} 1&0&0\\ 0&1&0 \\ 2&4&1 \end{smallmatrix} \right)$ & $ \left(\begin{smallmatrix} 1&0&0\\ 0&3&4 \\ 4&2&3 \end{smallmatrix} \right)$ & - & - & \small{$|G_2|=256, |H_2| = 16$} \\[3pt] \hline\hline 
		
		$(1,2,8)^T$ & 4 & $(0,1,0)^T$ & $(0,1,2)^T$ & - & - & - & - & \\\cline{1-8}
		\multicolumn{2}{|c|}{$|O^+(NL+\vec h_i)|$} & 2 & 2 & - & - & - & - & $|O^+(L)|=4$ \\ \cline{1-8}
		\multicolumn{3}{|c|}{$ X_{i,2} \in G_{L,64,4}, \;X_{i,2} \vec h_0 \equiv \vec h_i$} & \rule{0pt}{14pt} $\left(\begin{smallmatrix} 1&0&0 \\0&1&0 \\0&2&1 \end{smallmatrix} \right)$ & - & - & - & - & \small{$|G_2|=64, |H_2| = 16$}\\[3pt] \hline\hline  
		
		$(1,2,8)^T$ & 4 & $(1,0,1)^T$ & $(1,2,0)^T$ & $(1,2,2)^T$ & $(1,0,0)^T$ & $(1,0,2)^T$ & $(1,2,1)^T$ & \\\cline{1-8}
		\multicolumn{2}{|c|}{$|O^+(NL+\vec h_i)|$} & 1 & 2 & 2 & 2 & 2 & 1 & $|O^+(L)|=4$ \\ \cline{1-8}
		\multicolumn{3}{|c|}{$ X_{i,2} \in G_{L,64,4}, \;X_{i,2} \vec h_0 \equiv \vec h_i$} & \rule{0pt}{14pt} $ \left(\begin{smallmatrix} 1&0&0\\ 2&3&0 \\ 1&2&3 \end{smallmatrix} \right)$ & $ \left(\begin{smallmatrix} 1&0&0\\ 2&3&0 \\ 3&0&3 \end{smallmatrix} \right)$ & $ \left(\begin{smallmatrix} 1&0&0\\ 0&3&0 \\ 1&0&3 \end{smallmatrix} \right)$ & $ \left(\begin{smallmatrix} 1&0&0\\ 0&1&0 \\ 1&2&1 \end{smallmatrix} \right)$ & $ \left(\begin{smallmatrix} 1&0&0\\ 2&3&0 \\ 2&2&3 \end{smallmatrix} \right)$ & \small{$|G_2|=64, |H_2| = 4$} \\[3pt] \hline\hline 
		
		$(1,2,8)^T$ & 8 & $(4,2,1)^T$ & $(4,2,3)^T$ & - & - & - & - & \\\cline{1-8}
		\multicolumn{2}{|c|}{$|O^+(NL+\vec h_i)|$} & 1 & 1 & - & - & - & - & $|O^+(L)|=4$ \\ \cline{1-8}
		\multicolumn{3}{|c|}{$ X_{i,2} \in G_{L,128,8}, \;X_{i,2} \vec h_0 \equiv \vec h_i$} & \rule{0pt}{14pt} $\left(\begin{smallmatrix} 3&0&0 \\0&1&0 \\1&2&3 \end{smallmatrix} \right)$ & - & - & - & - & \small{$|G_2|=512, |H_2| = 64$}\\[3pt] \hline\hline  
		
		$(1,2,16)^T$ & 2 & $(1,0,0)^T$ & $(1,0,1)^T$ & - & - & - & - & \\\cline{1-8}
		\multicolumn{2}{|c|}{$|O^+(NL+\vec h_i)|$} & 4 & 4 & - & - & - & - & $|O^+(L)|=4$ \\ \cline{1-8}
		\multicolumn{3}{|c|}{$ X_{i,2} \in G_{L,128,2}, \;X_{i,2} \vec h_0 \equiv \vec h_i$} & \rule{0pt}{14pt} $\left(\begin{smallmatrix} 1&0&0 \\0&1&0 \\1&0&1 \end{smallmatrix} \right)$ & - & - & - & - & \small{$|G_2|=4, |H_2| = 2$}\\[3pt] \hline\pagebreak   
		
		$(1,3,3)^T$ & 6 & $(0,0,1)^T$ & $(0,2,3)^T$ & - & - & - & - & \\\cline{1-8}
		\multicolumn{2}{|c|}{$|O^+(NL+\vec h_i)|$} & 2 & 2 & - & - & - & - & $|O^+(L)|=8$ \\ \cline{1-8}
		\multicolumn{3}{|c|}{$ X_{i,2} \in G_{L,8,2}, \;X_{i,2} \vec h_0 \equiv \vec h_i$} & \rule{0pt}{14pt}$ \left(\begin{smallmatrix} 1&0&0\\ 0&1&0 \\ 0&0&1 \end{smallmatrix} \right)$ & - & - & - & - & \small{$|G_2|=2, |H_2| = 1$}\\[3pt] \cline{1-8}  
		\multicolumn{3}{|c|}{$ X_{i,3} \in G_{L,9,3}, \;X_{i,3} \vec h_0 \equiv \vec h_i$} & \rule{0pt}{14pt} $ \left(\begin{smallmatrix} 1&0&0\\ 1&0&2 \\ 0&1&0 \end{smallmatrix} \right)$ & - & - & - & - & \small{$|G_3|=72, |H_3| = 18$}\\[3pt] \hline\hline  
		
		$(1,3,3)^T$ & 12 & $(3,3,4)^T$ & $(3,3,8)^T$ & $(3,0,1)^T$ & $(3,0,5)^T$ & - & - & \\\cline{1-8}
		\multicolumn{2}{|c|}{$|O^+(NL+\vec h_i)|$} & 1 & 1 & 1 & 1 & - & - & $|O^+(L)|=8$ \\ \cline{1-8}
		\multicolumn{3}{|c|}{$ X_{i,2} \in G_{L,8,4}, \;X_{i,2} \vec h_0 \equiv \vec h_i$} &\rule{0pt}{14pt} $ \left(\begin{smallmatrix} 1&0&0\\ 0&1&0 \\ 0&0&1 \end{smallmatrix} \right)$ & $ \left(\begin{smallmatrix} 1&0&0\\ 0&0&1 \\ 0&3&0 \end{smallmatrix} \right)$ & $ \left(\begin{smallmatrix} 1&0&0\\ 0&0&1 \\ 0&3&0 \end{smallmatrix} \right)$ & - & - & \small{$|G_2|=16, |H_2| = 2$}\\[3pt] \cline{1-8}  
		\multicolumn{3}{|c|}{$ X_{i,3} \in G_{L,9,3}, \;X_{i,3} \vec h_0 \equiv \vec h_i$} &\rule{0pt}{14pt} $ \left(\begin{smallmatrix} 1&0&0\\ 0&2&0 \\ 0&0&2 \end{smallmatrix} \right)$ & $ \left(\begin{smallmatrix} 1&0&0\\ 0&1&0 \\ 0&0&1 \end{smallmatrix} \right)$ & $ \left(\begin{smallmatrix} 1&0&0\\ 0&2&0 \\ 0&0&2 \end{smallmatrix} \right)$ & - & - & \small{$|G_3|=72, |H_3| = 18$}\\[3pt] \hline\hline  
		
		$(1,3,9)^T$ & 4 & $(1,1,0)^T$ & $(0,1,1)^T$ & - & - & - & - & \\\cline{1-8}
		\multicolumn{2}{|c|}{$|O^+(NL+\vec h_i)|$} & 2 & 2 & - & - & - & - & $|O^+(L)|=4$ \\ \cline{1-8}
		\multicolumn{3}{|c|}{$ X_{i,2} \in G_{L,8,2}, \;X_{i,2} \vec h_0 \equiv \vec h_i$} & \rule{0pt}{14pt} $\left(\begin{smallmatrix} 0&0&1 \\0&1&0 \\3&0&0 \end{smallmatrix} \right)$ & - & - & - & - & \small{$|G_2|=16, |H_2| = 2$}\\[3pt] \hline   \hline  
		
		$(1,3,9)^T$ & 6 & $(0,0,1)^T$ & $(3,0,2)^T$ & - & - & - & - & \\\cline{1-8}
		\multicolumn{2}{|c|}{$|O^+(NL+\vec h_i)|$} & 2 & 2 & - & - & - & - & $|O^+(L)|=4$ \\ \cline{1-8}
		\multicolumn{3}{|c|}{$ X_{i,2} \in G_{L,8,2}, \;X_{i,2} \vec h_0 \equiv \vec h_i$} & \rule{0pt}{14pt} $ \left(\begin{smallmatrix} 0&0&1\\ 0&1&0 \\ 1&0&0 \end{smallmatrix} \right)$ & - & - & - & - & \small{$|G_2|=2, |H_2| = 1$}\\[3pt] \cline{1-8}  
		\multicolumn{3}{|c|}{$ X_{i,3} \in G_{L,27,3}, \;X_{i,3} \vec h_0 \equiv \vec h_i$} & \rule{0pt}{14pt} $ \left(\begin{smallmatrix} 1&0&0\\ 0&2&0 \\ 0&0&2 \end{smallmatrix} \right)$ & - & - & - & - & \small{$|G_3|=108, |H_3| = 54$}\\[3pt] \hline\hline   
		
		$(1,3,12)^T$ & 3 & $(0,1,0)^T$ & $(0,0,1)^T$ & - & - & - & - & \\\cline{1-8}
		\multicolumn{2}{|c|}{$|O^+(NL+\vec h_i)|$} & 2 & 2 & - & - & - & - & $|O^+(L)|=4$ \\ \cline{1-8}
		\multicolumn{3}{|c|}{$ X_{i,3} \in G_{L,9,3}, \;X_{i,3} \vec h_0 \equiv \vec h_i$} & \rule{0pt}{14pt} $\left(\begin{smallmatrix} 1&0&0 \\0&0&1 \\0&2&0 \end{smallmatrix} \right)$ & - & - & - & - & \small{$|G_3|=72, |H_3| = 18$}\\[3pt] \hline\hline  
		
		$(1,3,12)^T$ & 6 & $(3,1,0)^T$ & $(3,3,2)^T$ & - & - & - & - & \\\cline{1-8}
		\multicolumn{2}{|c|}{$|O^+(NL+\vec h_i)|$} & 2 & 2 & - & - & - & - & $|O^+(L)|=4$ \\ \cline{1-8}
		\multicolumn{3}{|c|}{$ X_{i,2} \in G_{L,32,2}, \;X_{i,2} \vec h_0 \equiv \vec h_i$} & \rule{0pt}{14pt} $ \left(\begin{smallmatrix} 1&0&0\\ 0&1&0 \\ 0&0&1 \end{smallmatrix} \right)$ & - & - & - & - & \small{$|G_2|=2, |H_2| = 2$}\\[3pt] \cline{1-8}  
		\multicolumn{3}{|c|}{$ X_{i,3} \in G_{L,9,3}, \;X_{i,3} \vec h_0 \equiv \vec h_i$} & \rule{0pt}{14pt} $ \left(\begin{smallmatrix} 1&0&0\\ 0&0&1 \\ 0&2&0 \end{smallmatrix} \right)$ & - & - & - & - & \small{$|G_3|=72, |H_3| = 18$}\\[3pt] \hline\hline 
		
		$(1,3,12)^T$ & 12 & $(0,4,3)^T$ & $(0,0,1)^T$ & $(0,0,5)^T$ & - & - & - & \\\cline{1-8}
		\multicolumn{2}{|c|}{$|O^+(NL+\vec h_i)|$} & 1 & 2 & 2 & - & - & - & $|O^+(L)|=4$ \\ \cline{1-8}
		\multicolumn{3}{|c|}{$ X_{i,2} \in G_{L,32,4}, \;X_{i,2} \vec h_0 \equiv \vec h_i$} & \rule{0pt}{14pt} $ \left(\begin{smallmatrix} 1&0&0\\ 0&3&0 \\ 0&0&3 \end{smallmatrix} \right)$ & $ \left(\begin{smallmatrix} 1&0&0\\ 0&3&0 \\ 0&0&3 \end{smallmatrix} \right)$ & - & - & - & \small{$|G_2|=32, |H_2| = 16$}\\[3pt] \cline{1-8}  
		\multicolumn{3}{|c|}{$ X_{i,3} \in G_{L,9,3}, \;X_{i,3} \vec h_0 \equiv \vec h_i$} & \rule{0pt}{14pt} $ \left(\begin{smallmatrix} 1&0&0\\ 0&0&2 \\ 0&1&0 \end{smallmatrix} \right)$ & $ \left(\begin{smallmatrix} 1&0&0\\ 0&0&1 \\ 0&2&0 \end{smallmatrix} \right)$ & - & - & - & \small{$|G_3|=72, |H_3| = 18$}\\[3pt] \hline\hline 
		
		$(1,4,4)^T$ & 2 & $(1,0,0)^T$ & $(1,1,1)^T$ & - & - & - & - & \\\cline{1-8}
		\multicolumn{2}{|c|}{$|O^+(NL+\vec h_i)|$} & 8 & 8 & - & - & - & - & $|O^+(L)|=8$ \\ \cline{1-8}
		\multicolumn{3}{|c|}{$ X_{i,2} \in G_{L,32,2}, \;X_{i,2} \vec h_0 \equiv \vec h_i$} & \rule{0pt}{14pt} $ \left(\begin{smallmatrix} 1&0&0\\ 1&1&0 \\ 1&0&1 \end{smallmatrix} \right)$ & - & - & - & - & \small{$|G_2|=4, |H_2| = 2$} \\[3pt] \hline\hline 
		
		$(1,4,4)^T$ & 4 & $(0,0,1)^T$ & $(0,2,1)^T$ & - & - & - & - & \\\cline{1-8}
		\multicolumn{2}{|c|}{$|O^+(NL+\vec h_i)|$} & 2 & 2 & - & - & - & - & $|O^+(L)|=8$ \\ \cline{1-8}
		\multicolumn{3}{|c|}{$ X_{i,2} \in G_{L,32,4}, \;X_{i,2} \vec h_0 \equiv \vec h_i$} &\rule{0pt}{14pt} $ \left(\begin{smallmatrix} 1&0&0\\ 1&1&2 \\ 1&2&1 \end{smallmatrix} \right)$ & - & - & - & - & \small{$|G_2|=64, |H_2| = 8$} \\[3pt] \hline\hline 
		
		$(1,4,4)^T$ & 4 & $(1,1,1)^T$ & $(1,0,0)^T$ & $(1,2,0)^T$ & $(1,2,2)^T$ & - & - & \\\cline{1-8}
		\multicolumn{2}{|c|}{$|O^+(NL+\vec h_i)|$} & 1 & 4 & 2 & 4 & - & - & $|O^+(L)|=8$ \\ \cline{1-8}
		\multicolumn{3}{|c|}{$ X_{i,2} \in G_{L,32,4}, \;X_{i,2} \vec h_0 \equiv \vec h_i$} & \rule{0pt}{14pt} $ \left(\begin{smallmatrix} 1&0&0\\ 1&1&2 \\ 1&2&1 \end{smallmatrix} \right)$ & $ \left(\begin{smallmatrix} 1&0&0\\ 1&3&2 \\ 3&2&3 \end{smallmatrix} \right)$ & $ \left(\begin{smallmatrix} 1&0&0\\ 1&2&3 \\ 3&1&2 \end{smallmatrix} \right)$ & - & - & \small{$|G_2|=64, |H_2| = 4$} \\[3pt] \hline\hline 
		
		$(1,4,4)^T$ & 8 & $(4,1,2)^T$ & $(0,3,0)^T$ & $(0,3,4)^T$ & - & - & - & \\\cline{1-8}
		\multicolumn{2}{|c|}{$|O^+(NL+\vec h_i)|$} & 1 & 2 & 2 & - & - & - & $|O^+(L)|=8$ \\ \cline{1-8}
		\multicolumn{3}{|c|}{$ X_{i,2} \in G_{L,64,8}, \;X_{i,2} \vec h_0 \equiv \vec h_i$} & \rule{0pt}{14pt} $ \left(\begin{smallmatrix} 3&4&4\\ 1&3&6 \\ 1&2&5 \end{smallmatrix} \right)$ & $ \left(\begin{smallmatrix} 3&4&4\\ 1&3&2 \\ 1&6&5 \end{smallmatrix} \right)$ & - & - & - & \small{$|G_2|=512, |H_2| = 32$} \\[3pt] \hline\pagebreak  
		
		$(1,4,4)^T$ & 8 & $(4,3,2)^T$ & $(0,1,0)^T$ & $(0,1,4)^T$ & - & - & - & \\\cline{1-8}
		\multicolumn{2}{|c|}{$|O^+(NL+\vec h_i)|$} & 1 & 2 & 2 & - & - & - & $|O^+(L)|=8$ \\ \cline{1-8}
		\multicolumn{3}{|c|}{$ X_{i,2} \in G_{L,64,8}, \;X_{i,2} \vec h_0 \equiv \vec h_i$} & \rule{0pt}{14pt} $ \left(\begin{smallmatrix} 3&4&4\\ 1&3&6 \\ 1&2&5 \end{smallmatrix} \right)$ & $ \left(\begin{smallmatrix} 3&4&4\\ 1&3&2 \\ 1&6&5 \end{smallmatrix} \right)$ & - & - & - & \small{$|G_2|=512, |H_2| = 32$} \\[3pt] \hline\hline 
		
		$(1,4,8)^T$ & 4 & $(2,1,0)^T$ & $(2,1,2)^T$ & - & - & - & - & \\\cline{1-8}
		\multicolumn{2}{|c|}{$|O^+(NL+\vec h_i)|$} & 2 & 2 & - & - & - & - & $|O^+(L)|=4$ \\ \cline{1-8}
		\multicolumn{3}{|c|}{$ X_{i,2} \in G_{L,64,4}, \;X_{i,2} \vec h_0 \equiv \vec h_i$} &\rule{0pt}{14pt} $ \left(\begin{smallmatrix} 1&0&0\\ 0&1&0 \\ 0&2&1 \end{smallmatrix} \right)$ & - & - & - & - & \small{$|G_2|=64, |H_2| = 16$} \\[3pt] \hline\hline 
		
		$(1,4,12)^T$ & 12 & $(6,0,1)^T$ & $(6,0,5)^T$ & - & - & - & - & \\\cline{1-8}
		\multicolumn{2}{|c|}{$|O^+(NL+\vec h_i)|$} & 2 & 2 & - & - & - & - & $|O^+(L)|=4$ \\ \cline{1-8}
		\multicolumn{3}{|c|}{$ X_{i,2} \in G_{L,32,4}, \;X_{i,2} \vec h_0 \equiv \vec h_i$} &\rule{0pt}{14pt} $ \left(\begin{smallmatrix} 1&0&0\\ 0&1&0 \\ 0&0&1 \end{smallmatrix} \right)$ & - & - & - & - & \small{$|G_2|=32, |H_2| = 16$}\\[3pt] \cline{1-8}  
		\multicolumn{3}{|c|}{$ X_{i,3} \in G_{L,9,3}, \;X_{i,3} \vec h_0 \equiv \vec h_i$} &\rule{0pt}{14pt} $ \left(\begin{smallmatrix} 1&0&0\\ 0&2&0 \\ 0&0&2 \end{smallmatrix} \right)$ & - & - & - & - & \small{$|G_3|=72, |H_3| = 36$}\\[3pt] \hline\hline 
		
		$(1,6,6)^T$ & 12 & $(0,1,5)^T$ & $(0,1,1)^T$ & $(0,5,5)^T$ & - & - & - & \\\cline{1-8}
		\multicolumn{2}{|c|}{$|O^+(NL+\vec h_i)|$} & 1 & 2 & 2 & - & - & - & $|O^+(L)|=8$ \\ \cline{1-8}
		\multicolumn{3}{|c|}{$ X_{i,2} \in G_{L,16,4}, \;X_{i,2} \vec h_0 \equiv \vec h_i$} &\rule{0pt}{14pt} $ \left(\begin{smallmatrix} 1&0&0\\ 0&1&0 \\ 0&0&1 \end{smallmatrix} \right)$ & $ \left(\begin{smallmatrix} 1&0&0\\ 0&1&0 \\ 0&0&1 \end{smallmatrix} \right)$ & - & - & - & \small{$|G_2|=32, |H_2| = 8$}\\[3pt] \cline{1-8}  
		\multicolumn{3}{|c|}{$ X_{i,3} \in G_{L,9,3}, \;X_{i,3} \vec h_0 \equiv \vec h_i$} &\rule{0pt}{14pt} $ \left(\begin{smallmatrix} 1&0&0\\ 0&0&2 \\ 0&1&0 \end{smallmatrix} \right)$ & $ \left(\begin{smallmatrix} 1&0&0\\ 0&0&1 \\ 0&2&0 \end{smallmatrix} \right)$ & - & - & - & \small{$|G_3|=72, |H_3| = 18$}\\[3pt] \hline \hline   
		
		$(1,8,8)^T$ & 2 & $(1,0,1)^T$ & $(1,0,0)^T$ & $(1,1,1)^T$ & - & - & - & \\\cline{1-8}
		\multicolumn{2}{|c|}{$|O^+(NL+\vec h_i)|$} & 4 & 8 & 8 & - & - & - & $|O^+(L)|=8$ \\ \cline{1-8}
		\multicolumn{3}{|c|}{$ X_{i,2} \in G_{L,64,2}, \;X_{i,2} \vec h_0 \equiv \vec h_i$} & \rule{0pt}{14pt}$ \left(\begin{smallmatrix} 1&0&0\\ 0&1&0 \\ 1&0&1 \end{smallmatrix} \right)$ & $ \left(\begin{smallmatrix} 1&0&0\\ 1&1&0 \\ 0&0&1 \end{smallmatrix} \right)$ & - & - & - & \small{$|G_2|=8, |H_2| = 2$} \\[3pt] \hline \hline 
		
		$(1,8,8)^T$ & 8 & $(0,1,3)^T$ & $(0,1,1)^T$ & $(0,3,3)^T$ & - & - & - & \\\cline{1-8}
		\multicolumn{2}{|c|}{$|O^+(NL+\vec h_i)|$} & 1 & 2 & 2 & - & - & - & $|O^+(L)|=8$ \\ \cline{1-8}
		\multicolumn{3}{|c|}{$ X_{i,2} \in G_{L,128,8}, \;X_{i,2} \vec h_0 \equiv \vec h_i$} &\rule{0pt}{14pt} $ \left(\begin{smallmatrix} 3&0&0\\ 0&1&0 \\ 1&0&3 \end{smallmatrix} \right)$ & $ \left(\begin{smallmatrix} 3&0&0\\ 0&7&4 \\ 1&4&5 \end{smallmatrix} \right)$ & - & - & - & \small{$|G_2|=1024, |H_2| = 64$} \\[3pt] \hline\hline  
		
		$(1,8,8)^T$ & 8 & $(4,1,1)^T$ & $(4,3,3)^T$ & - & - & - & - & \\\cline{1-8}
		\multicolumn{2}{|c|}{$|O^+(NL+\vec h_i)|$} & 2 & 2 & - & - & - & - & $|O^+(L)|=8$ \\ \cline{1-8}
		\multicolumn{3}{|c|}{$ X_{i,2} \in G_{L,128,8}, \;X_{i,2} \vec h_0 \equiv \vec h_i$} &\rule{0pt}{14pt} $ \left(\begin{smallmatrix} 1&0&0\\ 5&3&4 \\ 3&4&3 \end{smallmatrix} \right)$ & - & - & - & - & \small{$|G_2|=1024, |H_2| = 128$} \\[3pt] \hline\hline 
		
		$(1,12,12)^T$ & 12 & $(6,3,4)^T$ & $(6,0,1)^T$ & $(6,0,5)^T$ & - & - & - & \\\cline{1-8}
		\multicolumn{2}{|c|}{$|O^+(NL+\vec h_i)|$} & 1 & 2 & 2 & - & - & - & $|O^+(L)|=8$ \\ \cline{1-8}
		\multicolumn{3}{|c|}{$ X_{i,2} \in G_{L,32,4}, \;X_{i,2} \vec h_0 \equiv \vec h_i$} &\rule{0pt}{14pt} $ \left(\begin{smallmatrix} 1&0&0\\ 0&0&1 \\ 0&3&0 \end{smallmatrix} \right)$ & $ \left(\begin{smallmatrix} 1&0&0\\ 0&0&1 \\ 0&3&0 \end{smallmatrix} \right)$ & - & - & - & \small{$|G_2|=64, |H_2| = 16$}\\[3pt] \cline{1-8}  
		\multicolumn{3}{|c|}{$ X_{i,3} \in G_{L,9,3}, \;X_{i,3} \vec h_0 \equiv \vec h_i$} &\rule{0pt}{14pt} $ \left(\begin{smallmatrix} 1&0&0\\ 0&1&0 \\ 0&0&1 \end{smallmatrix} \right)$ & $ \left(\begin{smallmatrix} 1&0&0\\ 0&0&2 \\ 0&1&0 \end{smallmatrix} \right)$ & - & - & - & \small{$|G_3|=72, |H_3| = 18$}\\[3pt] \hline\hline
		
		$(3,4,12)^T$ & 3 & $(1,0,0)^T$ & $(0,0,1)^T$ & - & - & - & - & \\\cline{1-8}
		\multicolumn{2}{|c|}{$|O^+(NL+\vec h_i)|$} & 2 & 2 & - & - & - & - & $|O^+(L)|=4$ \\ \cline{1-8}
		\multicolumn{3}{|c|}{$ X_{i,3} \in G_{L,9,3}, \;X_{i,3} \vec h_0 \equiv \vec h_i$} &\rule{0pt}{14pt} $\left(\begin{smallmatrix} 0&0&1 \\0&2&0 \\1&0&0 \end{smallmatrix} \right)$ & - & - & - & - & \small{$|G_3|=72, |H_3| = 18$}\\[3pt] \hline\hline  
		
		$(3,4,12)^T$ & 6 & $(0,0,1)^T$ & $(2,0,3)^T$ & - & - & - & - & \\\cline{1-8}
		\multicolumn{2}{|c|}{$|O^+(NL+\vec h_i)|$} & 2 & 2 & - & - & - & - & $|O^+(L)|=4$ \\ \cline{1-8}
		\multicolumn{3}{|c|}{$ X_{i,2} \in G_{L,32,2}, \;X_{i,2} \vec h_0 \equiv \vec h_i$} &\rule{0pt}{14pt} $ \left(\begin{smallmatrix} 1&0&0\\ 0&1&0 \\ 0&0&1 \end{smallmatrix} \right)$ & - & - & - & - & \small{$|G_2|=2, |H_2| = 2$}\\[3pt] \cline{1-8}  
		\multicolumn{3}{|c|}{$ X_{i,3} \in G_{L,9,3}, \;X_{i,3} \vec h_0 \equiv \vec h_i$} & $ \left(\begin{smallmatrix} 0&0&2\\ 0&1&0 \\ 1&0&0 \end{smallmatrix} \right)$ & - & - & - & - & \small{$|G_3|=72, |H_3| = 18$}\\[3pt] \hline \hline
		
		$(3,4,12)^T$ & 12 & $(2,3,0)^T$ & $(6,3,4)^T$ & - & - & - & - & \\\cline{1-8}
		\multicolumn{2}{|c|}{$|O^+(NL+\vec h_i)|$} & 1 & 1 & - & - & - & - & $|O^+(L)|=4$ \\ \cline{1-8}
		\multicolumn{3}{|c|}{$ X_{i,2} \in G_{L,32,4}, \;X_{i,2} \vec h_0 \equiv \vec h_i$} & \rule{0pt}{14pt}$ \left(\begin{smallmatrix} 1&0&0\\ 0&1&0 \\ 0&0&1 \end{smallmatrix} \right)$ & - & - & - & - & \small{$|G_2|=32, |H_2| = 16$}\\[3pt] \cline{1-8}  
		\multicolumn{3}{|c|}{$ X_{i,3} \in G_{L,9,3}, \;X_{i,3} \vec h_0 \equiv \vec h_i$} & \rule{0pt}{14pt}$ \left(\begin{smallmatrix} 0&0&2\\ 0&1&0 \\ 1&0&0 \end{smallmatrix} \right)$ & - & - & - & - & \small{$|G_3|=72, |H_3| = 18$}\\[3pt] \hline \pagebreak
		
		$(3,4,36)^T$ & 4 & $(2,1,0)^T$ & $(2,0,1)^T$ & - & - & - & - & \\\cline{1-8}
		\multicolumn{2}{|c|}{$|O^+(NL+\vec h_i)|$} & 2 & 2 & - & - & - & - & $|O^+(L)|=4$ \\ \cline{1-8}
		\multicolumn{3}{|c|}{$ X_{i,2} \in G_{L,32,4}, \;X_{i,2} \vec h_0 \equiv \vec h_i$} &\rule{0pt}{14pt} $ \left(\begin{smallmatrix} 3&0&0\\ 0&0&1 \\ 0&1&0 \end{smallmatrix} \right)$ & - & - & - & - & \small{$|G_2|=64, |H_2| = 16$} \\[3pt] \hline 
	\end{longtable}
	\end{footnotesize}
	\end{center}
}
	\section{Upper Bound for Image of Spinor Norm Map}\label{sctn::upperbnd_img_spn_norm}
	With reference to Section \ref{sctn::classify_found_cosets}, the following table evaluates the group $\theta(O^+(M_p)) \cap \theta(O^+(L_p))$ for a lattice $L = L_{\vec a}$ and a prime $p$ dividing $N$ and $4a_1a_2a_3$; this then gives a sort of ``upper bound'' on the group $\theta(O^+(NL_p + \vec h))$. This latter group can be used to find the number of spinor genera in the genus (c.f. equation (\ref{eqn::num_spn_in_gen})). Here, the lattice $M$ is given by $M = \Z \vec h + NL$. The groups $\theta(O^+(M_p))$ and $\theta(O^+(L_p))$ are easily calculated using the algorithm given in Lemma \ref{lem::conway_algorithm}. 
	
	Since the number of spinor genera in the genus can be evaluated using any one coset in a genus, and since the genus has already been classified in Proposition \ref{prop::classify_genus_all_listed} and Appendix \ref{sctn::gen_info}, in the table below the calculation of $\theta(O^+(M_p))$ has been done for only one value of $\vec h$ in the genus. 
	Also, the Gram matrix of $M$ is given in an ordered $\Z$-basis $\{\vec h, *, *\}$ for $M$. For the image of the spinor norm map, only the square class representatives are given. For instance, in the first row, we actually have $\theta(O^+(M_2)) = (\Q_2^\times)^2 \cup 2(\Q_2^\times)^2 \cup 5(\Q_2^\times)^2 \cup 10(\Q_2^\times)^2$, and $\theta(O^+(L_2)) = \Q_2^\times$.
{	
	\renewcommand{\arraystretch}{1.6}
	\begin{small}
	\begin{longtable}[c]{|c|c|c|c|c|c|c|c|} 
		\caption[]{Evaluation of $\theta(O^+(M_p)) \cap \theta(O^+(L_p))$}\label{table::spn_info_upperbnd}\\
		\hline 
		$\vec a$ & $N$ & $\vec h$ & $M$ & $p$ & {$\theta(O^+(M_p))$} & {$\theta(O^+(L_p))$} & \small{$ \theta(O^+(M_p))\cap \theta(O^+(L_p))$}\\\hline 
		\endfirsthead		
		\hline 
		$\vec a$ & $N$ & $\vec h$ & $M$ & $p$ & {$\theta(O^+(M_p))$} & {$\theta(O^+(L_p))$} & \small{$ \theta(O^+(M_p))\cap \theta(O^+(L_p))$}\\\hline 
		\endhead
		
		$(1,1,1)^T$ & $4$ & $(0,0,1)^T$ & $\left(\begin{smallmatrix}1&0&0\\0&16&0\\0&0&16\end{smallmatrix}\right)$ & $2$ & $ 1,2,5,10 $ & {all} & $ 1,2,5,10 $ \\[2pt] \hline 
		
		$(1,1,1)^T$ & $8$ & $(1,2,2)^T$ & $\left(\begin{smallmatrix}9&4&0\\4&16&0\\0&0&32\end{smallmatrix}\right)$ & $2$ & $ 1,2,5,10 $ & {all} & $ 1,2,5,10 $ \\[2pt] \hline 
		
		$(1,1,1)^T$ & $8$ & $(3,2,2)^T$ & $\left(\begin{smallmatrix}17&12&0\\12&16&0\\0&0&32\end{smallmatrix}\right)$ & $2$ & $ 1,2,5,10 $ & {all} & $ 1,2,5,10 $ \\[2pt] \hline 
		
		$(1,1,2)^T$ & $4$ & $(1,1,0)^T$ & $\left(\begin{smallmatrix}2&-2&0\\-2&10&0\\0&0&2\end{smallmatrix}\right)$ & $2$  & {$1,5$} & all & $1,5$\\[2pt] \hline 
		
		$(1,1,2)^T$ & $8$ & $(0,4,1)^T$ & $\left(\begin{smallmatrix}18&0&4\\0&64&0\\4&0&8\end{smallmatrix}\right)$ & $2$  & {$1,2,5,10$} & all & $1,2,5,10$\\[2pt] \hline 
		
		$(1,1,2)^T$ & $8$ & $(0,4,3)^T$ & $\left(\begin{smallmatrix}34&0&12\\0&64&0\\12&0&8\end{smallmatrix}\right)$ & $2$  & {$1,2,5,10$} & all & $1,2,5,10$\\[2pt]\hline 
		
		$(1,1,2)^T$ & $8$ & $(1,3,2)^T$ & $\left(\begin{smallmatrix}18&40&24\\40&160&96\\24&96&64\end{smallmatrix}\right)$ & $2$  & {$1,5$} & all & $1,5$\\[2pt]\hline 
				
		$(1,1,4)^T$ & $2$ & $(0,1,0)^T$ & $\left(\begin{smallmatrix}1&0&0\\0&4&0\\0&0&16
		\end{smallmatrix}\right)$ & $2$ & $1,5$ & $1,2,5,10$& $1,5$\\[2pt]\hline 
		
		$(1,1,4)^T$ & $4$ & $(1,2,1)^T$ & $\left(\begin{smallmatrix}9&10&-6\\10&20&-12\\-6&-12&20
		\end{smallmatrix}\right)$ & $2$ & $1,5$ & $1,2,5,10$& $1,5$\\[2pt]\hline 
		
		$(1,1,4)^T$ & $8$ & $(0,0,1)^T$& $\left(\begin{smallmatrix}4&0&0\\0&64&0\\0&0&64
		\end{smallmatrix}\right)$ & $2$ & $1,2,5,10$ & $1,2,5,10$ & $1,2,5,10$\\[2pt]\hline 
		
		$(1,1,4)^T$ & $8$ & $(0,0,3)^T$& $\left(\begin{smallmatrix}36&0&0\\0&64&0\\0&0&64\end{smallmatrix}\right)$ & $2$  & $1,2,5,10$ & $1,2,5,10$ &  $1,2,5,10$\\[2pt]\hline 
		
		$(1,1,8)^T$ & $2$ & $(1,1,0)^T$& $\left(\begin{smallmatrix}2&0&0\\0&2&0\\0&0&32\end{smallmatrix}\right)$ & $2$ & $ 1,2,5,10 $ & all & $1,2,5,10$ \\[2pt]\hline 
		
		$(1,1,8)^T$ & $4$ & $(1,1,1)^T$& $\left(\begin{smallmatrix}10&6&-22\\6&18&-26\\-22&-26&74\end{smallmatrix}\right)$ & $2$ & $ 1,5 $ & all & $1,5$ \\[2pt]\hline 
		
		$(1,2,2)^T$ & $4$ & $(1,0,2)^T$ & $\left(\begin{smallmatrix}9&0&2\\0&32&0\\2&0&4\end{smallmatrix}\right)$ & $2$ & $1,2,5,10$ & all & $1,2,5,10$ \\[2pt]\hline 
%
%
		$(1,2,2)^T$ & $8$ & $(4,1,3)^T$ & $\left(\begin{smallmatrix}36&40&-8\\40&80&-16\\-8&-16&16\end{smallmatrix}\right)$ & $2$ & $1,2,5,10$ & all & $1,2,5,10$ \\[2pt]\hline
		
		$(1,2,4)^T$ & $8$ & $(0,1,2)^T$ & $\left(\begin{smallmatrix}18&0&8\\0&64&0\\8&0&32\end{smallmatrix}\right)$ & $2$ & $1,2,5,10$ & all & $1,2,5,10$ \\[2pt]\hline 
		
		$(1,2,4)^T$ & $8$ & $(0,3,2)^T$ & $\left(\begin{smallmatrix}34&0&24\\0&64&0\\24&0&32\end{smallmatrix}\right)$ & $2$ & $1,2,5,10$ & all & $1,2,5,10$ \\[2pt]\hline 

		$(1,2,8)^T$ & $4$ & $(0,1,0)^T$ & $\left(\begin{smallmatrix}2&0&0\\0&16&0\\0&0&128\end{smallmatrix}\right)$ & $2$ & $ 1,2,3,6  $& all & $1,2,3,6$ \\[2pt]\hline 
		
		$(1,2,8)^T$ & $4$ & $(1,0,1)^T$ & $\left(\begin{smallmatrix}9&0&-23\\0&32&0\\-23&0&73\end{smallmatrix}\right)$ & $2$ & $ 1,2,5,10$ & all & $1,2,5,10$ \\[2pt]\hline 
		
		$(1,2,16)^T$ & $2$ & $(1,0,0)^T$ & $\left(\begin{smallmatrix}1&0&0\\0&8&0\\0&0&64\end{smallmatrix}\right)$ & $2$ & $ 1,2,3,6 $ & all & $1,2,3,6$ \\[2pt]\hline 
		
		\multirow{2}{*}{$(1,3,3)^T$} & \multirow{2}{*}{$6$} & \multirow{2}{*}{$(0,0,1)^T$}& \multirow{2}{*}{ $\left(\begin{smallmatrix}3&0&0\\0&36&0\\0&0&108 \end{smallmatrix}\right)$} & $2$ & $ 1,3,5,7 $ & all & $ 1,3,5,7 $ \\\cline{5-8}
		& & & & $3$ & $ 1,3 $ & all & $1,3$ \\\hline 
		
		\multirow{2}{*}{$(1,3,3)^T$} & \multirow{2}{*}{$12$} & \multirow{2}{*}{$(3,3,4)^T$}& \multirow{2}{*}{ $\left(\begin{smallmatrix}84&108&72\\108&324& 216\\72&216&252 \end{smallmatrix}\right)$} & $2$ & $ 1,3,5,7 $ & all & $ 1,3,5,7 $ \\\cline{5-8}
		& & & & $3$ & $ 1,3 $ & all & $1,3$ \\\hline 
		
		\multirow{2}{*}{$(1,3,9)^T$} & \multirow{2}{*}{$4$} & \multirow{2}{*}{$(1,1,0)^T$} & \multirow{2}{*}{ $\left(\begin{smallmatrix}4&-8&0\\-8&28&0\\0&0&144 \end{smallmatrix}\right)$ } & $2$ & $1,3,5,7$ & all & $1,3,5,7$ \\\cline{5-8}
		& & & & $3$ & $1,3$ & $1,3$ & $1,3$ \\\hline
		
		\multirow{2}{*}{$(1,3,9)^T$} & \multirow{2}{*}{$6$} & \multirow{2}{*}{$(0,0,1)^T$} & \multirow{2}{*}{ $\left(\begin{smallmatrix}9&0&0\\0&36&0\\0&0&108 \end{smallmatrix}\right)$ } & $2$ & $1,3,5,7$ & all & $1,3,5,7$ \\\cline{5-8}
		& & & & $3$ & all & $1,3$ & $1,3$ \\\hline 
		
		\multirow{2}{*}{$(1,3,12)^T$} & \multirow{2}{*}{$3$} & \multirow{2}{*}{$(0,1,0)^T$} & \multirow{2}{*}{ $\left(\begin{smallmatrix}3&0&0\\0&9&0\\0&0&108 \end{smallmatrix}\right)$} & $2$ & $1,3,5,7$ & $1,3,5,7$ & $1,3,5,7$ \\\cline{5-8}
		& & & & $3$ & $1,3$ & all & $1,3$ \\\hline 
		
		\multirow{2}{*}{$(1,3,12)^T$} & \multirow{2}{*}{$6$} & \multirow{2}{*}{$(3,1,0)^T$} &\multirow{2}{*}{ $\left(\begin{smallmatrix}12&-60&0\\-6&12&0\\0&0&432 \end{smallmatrix}\right)$} & $2$ & $1,3,5,7$ & $1,3,5,7$ & $1,3,5,7$  \\\cline{5-8}
		& & & & $3$ & $1,3$ & all & $1,3$\\\hline 
		
		\multirow{2}{*}{$(1,3,12)^T$} & \multirow{2}{*}{$12$} & \multirow{2}{*}{$(0,4,3)^T$} &\multirow{2}{*}{ $\left(\begin{smallmatrix}156&0&324\\0&144&0\\324&0&972 \end{smallmatrix}\right)$} & $2$ & $1,3,5,7$ & $1,3,5,7$ & $1,3,5,7$  \\\cline{5-8}
		& & & & $3$ & $1,3$ & all & $1,3$\\\hline 
		
		$(1,4,4)^T$ & $2$ & $(1,0,0)^T$ & $\left(\begin{smallmatrix}1&0&0\\0&16&0\\0&0&16\end{smallmatrix}\right)$ & $2$ & $1,2,5,10$ & $1,2,5,10$ & $1,2,5,10$ \\[2pt] \hline 
		
		$(1,4,4)^T$ & $4$ & $(0,0,1)^T$ & $\left(\begin{smallmatrix}4&0&0\\0&16&0\\0&0&64\end{smallmatrix}\right)$ & $2$ & $1,5$ & $1,2,5,10$ & $1,5$\\[2pt]\hline 
		
		$(1,4,4)^T$ & $4$ & $(1,1,1)^T$ & $\left(\begin{smallmatrix}9&-7&-7\\-7&41&-23\\-7&-23&41\end{smallmatrix}\right)$ & $2$ & $1,2,5,10$ & $1,2,5,10$ & $1,2,5,10$\\[2pt]\hline 
		
		$(1,4,4)^T$ & $4$ & $(4,1,2)^T$ & $\left(\begin{smallmatrix}36&40&16\\40&80&32\\16&32&64\end{smallmatrix}\right)$ & $2$ & $1,5$ & $1,2,5,10$ & $1,5$\\[2pt]\hline 
		
		$(1,4,4)^T$ & $4$ & $(4,3,2)^T$ & $\left(\begin{smallmatrix}68&104&48\\104&208&96\\48&96&64\end{smallmatrix}\right)$ & $2$ & $1,5$ & $1,2,5,10$ & $1,5$\\[2pt]\hline 
		
		$(1,4,8)^T$ & $4$ & $(2,1,0)^T$ & $\left(\begin{smallmatrix}8&0&0\\0&8&0\\0&0&128 \end{smallmatrix}\right)$ & $2$ &$ 1,2,5,10$ & all & $1,2,5,10$ \\[2pt]\hline 
		 
		\multirow{2}{*}{$(1,6,6)^T$} & \multirow{2}{*}{$12$} & \multirow{2}{*}{$(0,1,5)^T$} & \multirow{2}{*}{ $\left(\begin{smallmatrix}156&0&-204\\0&144&0\\-204&0&300\end{smallmatrix}\right)$ } & $2$ & $1,3,5,7$ & all & $1,3,5,7$ \\\cline{5-8}
		& & & & $3$ & $1,3$ & all & $1,3$\\\hline 
		
		$(1,8,8)^T$ & $2$ & $(1,0,1)^T$ & $\left(\begin{smallmatrix}9&0&-7\\0&32&0\\-7&0&9 \end{smallmatrix}\right)$ & $2$ &$ 1,2,5,10$ & all & $1,2,5,10$ \\[2pt]\hline
		
		$(1,8,8)^T$ & $8$ & $(0,1,3)^T$ & $\left(\begin{smallmatrix}80&0&-112\\0&64&0\\-112&0&208x \end{smallmatrix}\right)$ & $2$ &$ 1,5$ & all & $1,5$ \\[2pt] \hline 
		
		\multirow{2}{*}{$(1,12,12)^T$} & \multirow{2}{*}{$12$} & \multirow{2}{*}{$(6,3,4)^T$} & \multirow{2}{*}{ $\left(\begin{smallmatrix}336&408&192\\408&624&192\\192&192&192\end{smallmatrix}\right)$ } & $2$ & $1,3,5,7$ & $1,3,5,7$ & $1,3,5,7$ \\\cline{5-8}
		& & & & $3$ & $1,3$ & all & $1,3$\\\hline 
		
		\pagebreak 
		
		\multirow{2}{*}{$(3,4,12)^T$} & \multirow{2}{*}{$3$} & \multirow{2}{*}{$(1,0,0)^T$} & \multirow{2}{*}{ $\left(\begin{smallmatrix}3&0&0\\0&36&0\\0&0&108\end{smallmatrix}\right)$ } & $2$ & $1,3,5,7$ & $1,3,5,7$ & $1,3,5,7$ \\\cline{5-8}
		& & & & $3$ & $1,3$ & all & $1,3$\\\hline 
		
		\multirow{2}{*}{$(3,4,12)^T$} & \multirow{2}{*}{$6$} & \multirow{2}{*}{$(0,0,1)^T$} & \multirow{2}{*}{ $\left(\begin{smallmatrix}12&0&0\\0&108&0\\0&0&144 \end{smallmatrix}\right)$} & $2$ & all & $1,3,5,7$ & $1,3,5,7$ \\\cline{5-8}
		& & & & $3$ & $1,3$ & all & $1,3$\\\hline 
		
		\multirow{2}{*}{$(3,4,12)^T$} & \multirow{2}{*}{$12$} & \multirow{2}{*}{$(2,3,0)^T$} & \multirow{2}{*}{ $\left(\begin{smallmatrix}48&-96&0\\-96&336&0\\0&0&1728 \end{smallmatrix}\right)$} & $2$ & $1,3,5,7$ & $1,3,5,7$ & $1,3,5,7$ \\\cline{5-8}
		& & & & $3$ & $1,3$ & all & $1,3$\\\hline  
				
		\multirow{2}{*}{$(3,4,36)^T$} & \multirow{2}{*}{$4$} & \multirow{2}{*}{$(2,1,0)^T$} & \multirow{2}{*}{ $\left(\begin{smallmatrix}16&-8&0\\-8&16&0\\0&0&576 \end{smallmatrix}\right)$} & $2$ & $1,3,5,7$ & all & $1,3,5,7$ \\\cline{5-8}
		& & & & $3$ & $1,3$ & $1,3$ & $1,3$\\\hline
	\end{longtable}
	\end{small}
}
	\section{Grouping Lattice Cosets into Spinor Genera}\label{sctn::grouping_spn_gen}
	With reference to the proofs of Propositions \ref{prop::3+cls_in_genus_spnnum=1}, \ref{prop::spnnum=2}, and \ref{prop::spnnum=3}, for a given $\vec a,N$, and $\vec h$, the following table gives some elements $\sigma \Sigma$ of $O^+(V)O_\A'(V)$ (where $\sigma\in O^+(V)$ and $\Sigma \in O_\A'(V)$) and their corresponding actions on certain lattice cosets in the genus of $NL_{\vec a} + \vec h_0$. This gives an immediate proof that two lattice cosets are in the same spinor genus, and allows us to group the various lattice cosets in the genus of $NL_{\vec a}+\vec h_0$ into the various spinor genera.
	
	Here, for a given $\vec a$, $N$, and $\vec h_0$, the values of $\vec h_i$ and $\vec h_j$ are such that $\sigma \Sigma (NL+\vec h_i) = NL+\vec h_j$, where both $NL+\vec h_i$ and $NL+\vec h_j$ are in the genus of $NL+\vec h_0$. The indices $i$ and $j$ are in reference to Table \ref{table::genus_info} in Appendix \ref{sctn::gen_info}. If not explicitly mentioned, $\Sigma_p$ is defined to be the identity on $O^+(V_p)$. We describe the rotations in terms of symmetries $\tau_{\vec u}$, where $\vec u$ will be given in terms of the standard basis vectors $\vec e_1, \vec e_2, \vec e_3$ for $V_p$.  
	
	\begin{longtable}[c]{|c|c|c|c|c|c|c|}
		\caption{Action of $O^+(V)O_\A'(V)$ on lattice cosets}\label{table::spnorbit}\\
		\hline 
		$\vec a$ & $N$ & $\vec h_0$ & $\sigma$ & $\Sigma$ & $\vec h_i$ & $\vec h_j$ \\\hline  
		\endfirsthead
		\hline 
		$\vec a$ & $N$ & $\vec h_0$ & $\sigma$ & $\Sigma$ & $\vec h_i$ & $\vec h_j$ \\\hline  
		\endhead
		
		$(1,1,1)^T$ & 8 & $(1,2,2)^T$ & Id & $\Sigma_2 = \tau_{\vec e_1} \tau_{\vec e_1 + 2\vec e_2 - 2\vec e_3}$ & \footnotesize{$\begin{matrix} (3,0,0)^T \\ (3,4,4)^T \end{matrix}$} & \footnotesize{$\begin{matrix} (3,4,4)^T \\ (3,4,0)^T \end{matrix}$}\\\hline 
		
		$(1,1,1)^T$ & 8 & $(3,2,2)^T$ & Id & $\Sigma_2 = \tau_{\vec e_1} \tau_{\vec e_1+2\vec e_2 - 2\vec e_3}$ & \footnotesize{$\begin{matrix} (1,0,0)^T \\ (1,4,4)^T \end{matrix}$} & \footnotesize{$\begin{matrix} (1,4,4)^T \\ (1,4,0)^T \end{matrix}$}\\\hline 
			
		$(1,1,2)^T$ & 8 & $(0,4,1)^T$ & Id & $\Sigma_2 = \tau_{\vec e_1 + 3\vec e_2} \tau_{3\vec e_1 + 5\vec e_2 -2\vec e_3}$ & \footnotesize{$(0,0,3)^T$} & \footnotesize{$(4,4,3)^T$} \\\hline 
		
		$(1,1,2)^T$ & 8 & $(0,4,3)^T$ & Id & $\Sigma_2 = \tau_{\vec e_1 + 3\vec e_2} \tau_{3\vec e_1 + 5\vec e_2 -2\vec e_3}$ & \footnotesize{$(0,0,1)^T$} & \footnotesize{$(4,4,1)^T$} \\\hline 
		
		\multirow{2}{*}{$(1,1,2)^T$} & \multirow{2}{*}{8} & \multirow{2}{*}{$(1,3,2)^T$} & $\tau_{\vec e_2}\tau_{\vec e_3}$ & $\Sigma_2 = \tau_{\vec e_3} \tau_{5\vec e_1 + \vec e_2 - 2\vec e_3}$ & \footnotesize{$\begin{matrix} (1,3,2)^T \\ (1,1,0)^T \\ (1,1,4)^T \end{matrix}$} & \footnotesize{$\begin{matrix} (1,5,2)^T \\ (3,3,4)^T \\ (3,3,0)^T \end{matrix}$} \\\cline{4-7}
		& & & Id & $\Sigma_2 = \tau_{\vec e_1+\vec e_2} \tau_{3\vec e_1+\vec e_2 - 2\vec e_3}$ & \footnotesize{$(1,1,0)^T$} & \footnotesize{$(3,3,0)^T$} \\\hline 
		
		$(1,1,4)^T$ & 4 & $(1,2,1)^T$ & Id & $\Sigma_2 = \tau_{\vec e_1 - \vec e_2} \tau_{\vec e_1 + \vec e_2 - 2\vec e_3}$ & \footnotesize{$(1,0,0)^T$} & \footnotesize{$(1,0,2)^T$}\\\hline 
		
		$(1,1,8)^T$ & 4 & $(1,1,1)^T$ & Id & $\Sigma_2 = \tau_{\vec e_2} \tau_{3\vec e_2 - \vec e_3}$ & \footnotesize{$(1,1,0)^T$} & \footnotesize{$(1,1,2)^T$} \\\hline 
		
		$(1,2,2)^T$ & 4 & $(1,0,2)^T$ & Id & $\Sigma_2 = \tau_{\vec e_1 - \vec e_2} \tau_{\vec e_1 - \vec e_3}$ & \footnotesize{$(1,0,0)^T$} & \footnotesize{$(1,2,2)^T$} \\\hline  
		
		$(1,2,2)^T$ & 8 & $(4,1,3)^T$ & Id & $\Sigma_2 = \tau_{2\vec e_1 - \vec e_2} \tau_{2\vec e_1 - 4\vec e_2 - 3\vec e_3}$ & \footnotesize{$(0,1,1)^T$} & \footnotesize{$(0,3,3)^T$} \\\hline 
		
		\multirow{2}{*}{$(1,2,4)^T$} & \multirow{2}{*}{8} & \multirow{2}{*}{$(0,1,2)^T$} & Id & $\Sigma_2 = \tau_{\vec e_2} \tau_{\vec e_2 - 2\vec e_3}$ & \footnotesize{$\begin{matrix} (4,1,0)^T \\ (0,3,0)^T \end{matrix}$} & \footnotesize{$\begin{matrix} (4,1,4)^T \\ (0,3,4)^T  \end{matrix}$} \\\cline{4-7}
		& & & Id & $\Sigma_2 = \tau_{\vec e_1+\vec e_2} \tau_{2\vec e_2 - \vec e_3}$ & \footnotesize{$\begin{matrix} (4,3,2)^T \\ (0,1,2)^T \end{matrix}$} & \footnotesize{$\begin{matrix} (0,3,0)^T \\ (4,1,4)^T  \end{matrix}$}  \\\hline 
		
		\multirow{2}{*}{$(1,2,4)^T$} & \multirow{2}{*}{8} & \multirow{2}{*}{$(0,3,2)^T$} & Id & $\Sigma_2 = \tau_{\vec e_2} \tau_{\vec e_2 - 2\vec e_3}$ & \footnotesize{$\begin{matrix} (4,3,0)^T \\ (0,1,0)^T \end{matrix}$} & \footnotesize{$\begin{matrix} (4,3,4)^T \\ (0,1,4)^T  \end{matrix}$} \\\cline{4-7}
		& & & Id & $\Sigma_2 = \tau_{\vec e_1+\vec e_2} \tau_{2\vec e_2 - \vec e_3}$ & \footnotesize{$\begin{matrix} (4,1,2)^T \\ (0,3,2)^T \end{matrix}$} & \footnotesize{$\begin{matrix} (0,1,4)^T \\ (4,3,0)^T  \end{matrix}$}  \\\hline 
		
		$(1,2,8)^T$ & 4 & $(1,0,1)^T$ & Id & $\Sigma_2 = \tau_{\vec e_1 - \vec e_2} \tau_{2\vec e_1+\vec e_3}$ & \footnotesize{$\begin{matrix} (1,0,1)^T \\ (1,0,2)^T \\ (1,0,0)^T \\ (1,2,1)^T \end{matrix}$} & \footnotesize{$\begin{matrix} (1,2,2)^T \\ (1,0,1)^T\\ (1,2,1)^T \\ (1,0,2)^T \end{matrix}$}\\\hline 
		
		$(1,3,3)^T$ & 12 & $(3,3,4)^T$ & Id & $\Sigma_3 = \tau_{\vec e_2} \tau_{\vec e_3}$ & \footnotesize{$\begin{matrix} (3,3,4)^T \\ (3,0,1)^T  \end{matrix}$} & \footnotesize{$\begin{matrix} (3,3,8)^T \\ (3,0,5)^T \end{matrix}$}\\\hline 
		
		$(1,3,12)^T$ & 12 & $(0,4,3)^T$ & Id & $\Sigma_3 = \tau_{\vec e_2} \tau_{\vec e_3}$ & \footnotesize{$(0,0,1)^T$} & \footnotesize{$(0,0,5)^T$}\\\hline 
		
		\multirow{2}{*}{$(1,4,4)^T$} & \multirow{2}{*}{4} & \multirow{2}{*}{$(1,1,1)^T$} & Id & $\Sigma_2 = \tau_{\vec e_1 + \vec e_2} \tau_{\vec e_1 + 2\vec e_2 - \vec e_3}$ & \footnotesize{$(1,0,0)^T$} & \footnotesize{$(1,2,0)^T$} \\\cline{4-7}
		& & & Id & $\Sigma_2 = \tau_{\vec e_2} \tau_{-20\vec e_1 +\vec e_2 + 2\vec e_3}$ & \footnotesize{$(1,0,0)^T$} & \footnotesize{$(1,2,0)^T$}  \\\hline 
		
		$(1,4,4)^T$ & 8 & $(4,1,2)^T$ & Id & $\Sigma_2 = \tau_{2\vec e_2 - \vec e_3} \tau_{\vec e_1 + \vec e_2 + 2\vec e_3}$ & \footnotesize{$(0,3,0)^T$} & \footnotesize{$(0,3,4)^T$} \\\hline 
		
		$(1,4,4)^T$ & 8 & $(4,3,2)^T$ & Id & $\Sigma_2 = \tau_{2\vec e_2 - \vec e_3} \tau_{\vec e_1 + \vec e_2 + 2\vec e_3}$ & \footnotesize{$(0,1,0)^T$} & \footnotesize{$(0,1,4)^T$} \\\hline 
		
		$(1,6,6)^T$ & 12 & $(0,1,5)^T$ & Id & $\Sigma_3 = \tau_{\vec e_2 +\vec e_3} \tau_{3\vec e_1 + 2\vec e_2 + \vec e_3}$ & \footnotesize{$(0,1,1)^T$} & \footnotesize{$(0,5,5)^T$} \\\hline 
		
		$(1,8,8)^T$ & 2 & $(1,0,1)^T$ & Id & $\Sigma_2 = \tau_{12\vec e_1 -\vec e_2} \tau_{28\vec e_1 + 4\vec e_2 + 19\vec e_3}$ & \footnotesize{$(1,0,0)^T$} & \footnotesize{$(1,1,1)^T$} \\\hline 
		
		$(1,8,8)^T$ & 8 & $(0,1,3)^T$ & Id & $\Sigma_2 = \tau_{12\vec e_1 -\vec e_2} \tau_{28\vec e_1 + 4\vec e_2 + 19\vec e_3}$ & \footnotesize{$(0,1,1)^T$} & \footnotesize{$(0,3,3)^T$} \\ \hline 
		
		$(1,12,12)^T$ & 12 & $(6,3,4)^T$ & Id & $\Sigma_3 = \tau_{\vec e_2} \tau_{\vec e_3}$ & \footnotesize{$(6,0,1)^T$} & \footnotesize{$(6,0,5)^T$} \\\hline 
	\end{longtable}

	\printbibliography
\end{document}